\newtheorem{theorem}{Theorem}[section]
\newtheorem{corollary}[theorem]{Corollary}  
\newtheorem{proposition}[theorem]{Proposition}
\newtheorem{lemma}[theorem]{Lemma}
\newtheorem{remark}[theorem]{Remark}
\newtheorem{assumption}{Assumption}
\numberwithin{equation}{section}
\def\la{\lambda}
\def\sig{\sigma}
\newcommand{\beq}{\begin{equation}}
\newcommand{\eeq}{\end{equation}}
\newcommand{\beqn}{\begin{eqnarray}}
\newcommand{\eeqn}{\end{eqnarray}}
\newcommand{\tn}{\textnormal}
\newcommand{\ds}{\displaystyle}
\newcommand{\norm}[1]{\left\Vert#1\right\Vert}
\newcommand\eh{\mathrm{e}_h}
\title{Convergence of an inverse problem \\for discrete wave equations\footnote{Partially supported by the Agence Nationale de la Recherche (ANR, France), Project C-QUID number BLAN-3-139579, Project CISIFS number NT09-437023 and the University Paul Sabatier (Toulouse 3), AO PICAN.}}
\author{Lucie Baudouin$^{1,2,}$\footnote{e-mail: {\tt baudouin@laas.fr}}\\
{\it\footnotesize $^{1}$ CNRS ; LAAS ; 7 avenue du colonel Roche, F-31077 Toulouse Cedex 4, France}\\
{\it\footnotesize $^{2}$ Universit\'e de Toulouse ; UPS, INSA, INP, ISAE, UT1, UTM ; LAAS ; F-31077 Toulouse, France.}\\
Sylvain Ervedoza$^{3,4,}$\footnote{e-mail: {\tt ervedoza@math.univ-toulouse.fr}}\\
{\it\footnotesize $^{3}$ CNRS ; Institut de Math\'ematiques de Toulouse UMR 5219 ; F-31062 Toulouse, France, }\\
{\it\footnotesize $^{4}$ Universit\'e de Toulouse ; UPS, INSA, INP, ISAE, UT1, UTM ; IMT ; F-31062 Toulouse, France.
}}
\begin{document}
 
\maketitle

\abstract{
It is by now well-known that one can recover a potential in the wave equation from the knowledge of the initial waves, the boundary data and the flux on a part of the boundary satisfying the Gamma-conditions of J.-L. Lions. We are interested in proving that trying to fit the discrete fluxes, given by discrete approximations of the wave equation, with the continuous one, one recovers, at the limit, the potential of the continuous model. In order to do that, we shall develop a Lax-type argument, usually used for convergence results of numerical schemes, which states that consistency and uniform stability imply convergence. In our case, the most difficult part of the analysis is the one corresponding to the uniform stability, that we shall prove using new uniform discrete Carleman estimates, where uniform means with respect to the discretization parameter. We shall then deduce a convergence result for the discrete inverse problems. Our analysis will be restricted to the $1$-d case for space semi-discrete wave equations discretized on a uniform mesh using a finite differences approach.
}\\

\noindent {\bf Key words:} Inverse problem, Discrete wave equation, Discrete Carleman estimate, Stability, Convergence.\\

\noindent {\bf AMS subject classifications:}  35R30, 35L05, 65M32, 65M06

 \section{Introduction}
  
In this article, our goal is to study the convergence of an inverse problem for the $1$-d wave equation. Before introducing that problem, we shall present which inverse problem we are dealing with in the continuous setting.

\paragraph{The continuous inverse problem.}

For $T>0$, we consider the following continuous wave equation:
\begin{equation}
	\label{CWE1}
		\left\{\begin{array}{ll}
			\partial_{tt}y - \partial_{xx}y+ qy=g, \quad &(t,x) \in  (0,T)\times (0,1),
				\\
			y(t,0)=g^0(t),\quad y(t,1)=g^1(t),&  t\in (0,T),
				\\
			y(0,\cdot)= y^0, \quad \partial_t y(0,\cdot)= y^1. &
		\end{array}
		\right.
\end{equation}

Here, $y= y(t,x)$ is the amplitude of the waves, $(y^0, y^1)$ is the initial datum, $q = q(x)$ is a potential function, $g$ is a distributed source term and $(g^0,g^1)$ are boundary source terms.

Of course, this problem is well-posed in some functional spaces, for instance: If $(y^0, y^1) \in H^1(0,1) \times L^2(0,1)$, $g \in L^1(0,T;L^2(0,1))$, $g^i \in H^1(0,T)$ for $ i = 1, 2$, with the compatibility conditions $y^0(0) = g^0(0)$ and $y^0(1) = g^1(0)$ and $q \in L^\infty(0,1)$, the solution $y$ of \eqref{CWE1} belongs to $C([0,T]; H^1(0,1))\cap C^1([0,T], L^2(0,1))$.   
Such result is well-known except perhaps for the condition on the boundary data, which is a consequence of a hidden regularity result and a duality argument, giving a solution of \eqref{CWE1} in the sense of transposition - see \cite{Lions}, detailed for instance in \cite{LasieckaLionsTriggiani}.
Under this class of regularity, using again a hidden regularity result in \cite{Lions}, we can prove that $\partial_x  y(t,1)$ belongs to $L^2(0,T)$. 

We can therefore ask if, given $(y^0,y^1),\  g,\ (g^0,g^1)$, the knowledge of the additional information $\partial_x y(t,1)$ for a certain amount of time allows to characterize the potential $q$. We emphasize here that the data $(y^0,y^1),\ g, \ (g^0, g^1)$ are supposed to be known \emph{a priori}.

It has been proved in \cite{Baudouin01} that this question has a positive answer provided that $T$ is large enough ($T>1$ here) and $y \in H^1(0,T; L^\infty(0,1))$. Of course, to guarantee this regularity without any knowledge on $q$, we may impose some stronger conditions on the data $(y^0,y^1), \, g, \, (g^0,g^1)$, see e.g. in Remark \ref{RemReg} below.\\

Let us precisely recall the results in \cite{Baudouin01}. For $m \geq 0$, we introduce the set
$$
	L^\infty_{\leq m} (0,1) = \{q \in L^\infty(0,1), \ s.t. \, \norm{q}_{L^\infty(0,1)} \leq m \}.
$$
It will also be convenient to denote by $y[q]$ the solution $y$ of \eqref{CWE1} with potential $q$.
Assuming that $p\in L^\infty_{\leq m} (0,1)$ is a given potential, we are concerned with the stability of the map $ q \mapsto \partial_x y[q](\cdot,1)$  around $p$. Then we have the following local Lipschitz stability result: 
 
\begin{theorem}[\cite{Baudouin01}]\label{TCWE1}
Let $m>0$, $K>0$, $r>0$ and $T>1$.

Let $p$ in  $L^\infty_{\leq m}(0,1)$. Assume that the corresponding solution $y[p]$ of equation \eqref{CWE1} is such that 
\begin{equation}
	\label{RegularityCond}
	\norm{ y[p]}_{H^1(0,T;L^{\infty}(0,1))} \leq K. 
\end{equation}
Assume also that the initial datum $y^0$ satisfies
\begin{equation}
	\label{InitialDataCond}
	 \inf \left\{|y^0(x)|,x\in (0,1)\right\} \geq r.
\end{equation}
Then for all $q \in L^\infty_{\leq m}(0,1)$, $ \partial_{tx} y[p](\cdot,1)-\partial_{tx} y[q](\cdot,1) \in L^2(0,T)$ 
and there exists a constant $C>0$ that depends only on the parameters $(T,m, K,r)$ such that for all $q\in L^\infty_{\leq m}(0,1)$,
\begin{align}
	& 
	\norm{ \partial_{tx} y[p](\cdot,1)- \partial_{tx} y[q](\cdot,1) }_{L^2(0,T)} \leq  C \norm{p-q}_{L^2(0,1)},
	 \label{ReverseEst}
	\\
	& 
	\norm{ q-p}_{L^2(0,1)}
	\leq C \norm{ \partial_{tx} y[p](\cdot,1)-\partial_{tx} y[q](\cdot,1) }_{L^2(0,T)}.
	 \label{StabilityContinuous}
\end{align}
\end{theorem}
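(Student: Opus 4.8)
The plan is to follow the Bukhgeim--Klibanov strategy based on a global Carleman estimate for the $1$-d wave operator, observed at the boundary point $x=1$. First I would set $u = y[p]-y[q]$ and $f = q-p$, and subtract the two copies of \eqref{CWE1} to get that $u$ solves $\partial_{tt}u - \partial_{xx}u + q u = f\, y[p]$ on $(0,T)\times(0,1)$, with homogeneous lateral data $u(\cdot,0)=u(\cdot,1)=0$ and homogeneous initial data $u(0,\cdot)=\partial_t u(0,\cdot)=0$ (since $y[p]$ and $y[q]$ share the same data). The measured quantity $\partial_{tx}y[p](\cdot,1)-\partial_{tx}y[q](\cdot,1)$ is then exactly $\partial_x z(\cdot,1)$, where $z:=\partial_t u$.

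The reverse inequality \eqref{ReverseEst} is the easy direction. Differentiating the equation in time, $z=\partial_t u$ solves the same wave equation with source $f\,\partial_t y[p]$, zero Dirichlet data, and initial data $z(0,\cdot)=0$, $\partial_t z(0,\cdot)=f\,y^0$ (the latter obtained by evaluating the equation for $u$ at $t=0$ and using $u(0,\cdot)=0$). A standard energy estimate together with the hidden regularity of the normal derivative for the wave equation gives $\norm{\partial_x z(\cdot,1)}_{L^2(0,T)} \le C(\norm{f y^0}_{L^2(0,1)} + \norm{f\,\partial_t y[p]}_{L^1(0,T;L^2(0,1))})$, and the regularity bound \eqref{RegularityCond} on $y[p]$ (controlling $\norm{y^0}_{L^\infty}$ and $\norm{\partial_t y[p]}_{L^1(0,T;L^\infty)}$ by $K$) yields \eqref{ReverseEst} with a constant depending only on $(T,m,K,r)$.

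For the stability estimate \eqref{StabilityContinuous}, the crucial observation is that $\partial_t z(0,\cdot) = f\,y^0$, so the lower bound \eqref{InitialDataCond} turns a control of $\partial_t z(0,\cdot)$ into a control of $f$. To produce such a control I would extend $u$ evenly in time to $(-T,T)\times(0,1)$ (licit because $\partial_t u(0,\cdot)=0$, with the source $y[p]$ extended evenly as well), making $z$ odd, and apply a global Carleman estimate to $z$ on $(-T,T)\times(0,1)$ with a weight $\varphi(t,x)=e^{\lambda(|x-b|^2-\beta t^2)}$, where $b\le 0$ forces $\partial_x\varphi>0$ on $(0,1)$ so that only the trace at $x=1$ appears, and $\beta\in(0,1)$ makes $\varphi$ peak at $t=0$ and decay in $|t|$. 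It is precisely here that the hypothesis $T>1$ enters, guaranteeing the pseudoconvexity (Lions $\Gamma$-condition) needed for the estimate to hold with observation only at $x=1$. The zeroth-order term $-qz$ is absorbed into the left-hand side for $s$ large, and the source $f\,\partial_t y[p]$ is handled using $\norm{y[p]}_{H^1(0,T;L^\infty)}\le K$.

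The main obstacle is to convert the bulk Carleman estimate into a bound on the pointwise-in-time quantity $\int_0^1 |f y^0|^2 e^{2s\varphi(0,\cdot)}\,dx$ and to absorb the source even though it contains the unknown $f$. For the first point I would use the Bukhgeim--Klibanov energy device: the weighted energy $E(t)=\int_0^1(|\partial_t z|^2+|\partial_x z|^2)e^{2s\varphi(t,\cdot)}\,dx$ satisfies $E(0)=\int_0^1|f y^0|^2 e^{2s\varphi(0,\cdot)}\,dx$ (since $z(0,\cdot)=0$), and writing $E(0)\le E(T)+\int_0^T|E'(t)|\,dt$, using the equation for $z$ and integrating by parts in $x$ — the lateral terms vanishing because $z(\cdot,0)=z(\cdot,1)=0$ — bounds $\int_0^T|E'(t)|\,dt$ by the left-hand side of the Carleman estimate, hence, through that estimate, by the source and the observation integral $s\int_{-T}^T |\partial_x z(t,1)|^2 e^{2s\varphi(t,1)}\,dt$, while $E(\pm T)$ is negligible by the choice of weight. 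For the second point, the source contribution $\int |f\,\partial_t y[p]|^2 e^{2s\varphi}$ is bounded by $CK^2\int_0^1 |f|^2\big(\int e^{2s\varphi}\,dt\big)dx$; since $\varphi(t,x)\le\varphi(0,x)$ with quadratic decay in $t$ near $t=0$, the Gaussian integral in time gains a factor $s^{-1/2}$, so this contribution is at most $CK^2 r^{-2}s^{-1/2}E(0)$ and is absorbed into $E(0)$ for $s$ large — here \eqref{InitialDataCond} is used again, via $E(0)\ge r^2\int_0^1|f|^2 e^{2s\varphi(0,\cdot)}\,dx$. Fixing $s$ large and bounding the weights above and below by positive constants on the relevant sets finally yields $\norm{f}_{L^2(0,1)}\le C\norm{\partial_x z(\cdot,1)}_{L^2(0,T)}$, which is exactly \eqref{StabilityContinuous}.
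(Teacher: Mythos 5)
Your proposal follows exactly the route the paper relies on: Theorem \ref{TCWE1} is quoted from \cite{Baudouin01} without proof, and the scheme you describe (reduction to the inverse source problem for $u=y[p]-y[q]$ with $f=q-p$, time differentiation so that $\partial_t z(0,\cdot)=f\,y^0$, energy plus hidden regularity for \eqref{ReverseEst}, even-in-time extension, the weight $e^{\lambda(|x-x^0|^2-\beta t^2+C_0)}$ with $x^0<0$ and $\beta\in(0,1)$ tied to $T>1$, and the Bukhge\u{\i}m--Klibanov device to extract $\int_0^1|f\,y^0|^2e^{2s\varphi(0,\cdot)}\,dx$) is precisely the one the paper reproduces at the discrete level in the proof of Theorem \ref{TCWE3}. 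Your variant of the BK step (monitoring the weighted energy $E(t)$ rather than computing $\int_{-T}^0 P_{h,1}v\,\partial_t v$ as in Step 4 of the paper) is an equivalent, standard formulation.

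One step of your write-up does not go through as stated: to absorb the source you bound $\int|f\,\partial_t y[p]|^2e^{2s\varphi}$ by $CK^2\int_0^1|f|^2\bigl(\int e^{2s\varphi}\,dt\bigr)dx$ and invoke a Gaussian gain of $s^{-1/2}$ in the time integral. Pulling $|\partial_t y[p]|^2$ out of the $t$-integral requires $\partial_t y[p]\in L^\infty$ in time, whereas \eqref{RegularityCond} only gives $\partial_t y[p]\in L^2(0,T;L^\infty(0,1))$; with the available regularity one only gets $\int_{-T}^T|\partial_t y[p](t,x)|^2e^{2s\varphi(t,x)}dt\le K^2e^{2s\varphi(0,x)}$, with no gain in $s$. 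The absorption must therefore come from the other side: as in the paper (see the passage from the cross-product estimate to \eqref{Carestim} and the choice of $s_*$ with $\sqrt{s}\,r^2-2CK^2>0$), the Cauchy--Schwarz splitting $|\int P_1v\,\partial_t v|\le \tfrac{1}{2\sqrt s}\int|P_1v|^2+\tfrac{\sqrt s}{2}\int|\partial_t v|^2$ puts a factor $\sqrt{s}$ in front of the BK term $r^2\int|f|^2e^{2s\varphi(0,\cdot)}$, which then dominates the $O(1)$-in-$s$ source contribution for $s$ large. This is a local fix using only your stated ingredients, so the proposal is structurally sound; a second, cosmetic point is that you should take $x^0$ (your $b$) strictly negative, since the pseudoconvexity constant degenerates when $\inf_{(0,1)}|\partial_x\psi|=0$.
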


Estimate \eqref{StabilityContinuous} gives the Lispchitz stability of the inverse problem and \eqref{ReverseEst} states  the continuous dependance of the derivative of the flux of the solution with respect to the potential.
Together, these two estimates indicate that the above result is sharp. Note however that estimate \eqref{ReverseEst} is, by far, the easiest one to obtain.

\begin{remark}\label{RemReg}
	The condition \eqref{RegularityCond} can be guaranteed uniformly for $p \in L^\infty_{\leq m} (0,1)$ 
	with more constraints on the data $(y^0, y^1), \ g,\ (g^0,g^1)$ in \eqref{CWE1}, for instance:
	\begin{align*}
		& (y^0,y^1)  \in   H^2(0,1) \times H^1(0,1),
		\\
		& g  \in   W^{1,1}(0,T;L^2(0,1)),
		\quad
		(g^0, g^1) \in (H^2(0,T))^2,
	\end{align*}
	under the compatibility conditions
	$$
		 g^0 (0) = y^0(0), \quad g^1(0) = y^0(1), \quad \partial_t g^0(0) = y^1(0) \hbox{  and  } \partial_t g^1(0) = y^1(1).
	$$
	Indeed, under these assumptions, $\partial_t y[p]$ belongs to the space $C^0([0,T]; H^1(0,1))\cap C^1([0,T]; L^2(0,1))$ (see \cite{LasieckaLionsTriggiani}), with estimates 	depending only on $m$ and the norms of $(y^0, y^1),\, g, \, (g^0,g^1)$ in the above spaces. Therefore, due to Sobolev's imbedding, 
	$ y[p]$ satisfies \eqref{RegularityCond} for some constant $K>0$ that can be chosen uniformly with respect to  $p \in L^\infty_{\leq m} (0,1)$. 
\end{remark}

The method of proof of Theorem \ref{TCWE1} is based on a global Carleman estimate and is very close to the approach of \cite{ImYamCom01}, that concerns the wave equation with Neumann boundary condition and Dirichlet observation for the inverse problem of retrieving a potential. Actually, it also closely follows the approach of \cite{Yam99} but the work \cite{Baudouin01} requires less regularity conditions  on $y$. 

The use of Carleman estimates to prove uniqueness in inverse problems was introduced in \cite{BuKli81} by A. L. Bukhge{\u\i}m and M. V. Klibanov.  Concerning inverse problems for hyperbolic equations with a single observation, we can refer to  \cite{PuelYam96}, \cite{PuelYam97} or \cite{YamZhang03}, where the method relies on uniqueness results obtained by local Carleman estimates  (see e.g. \cite{Im02}, \cite{Isakov}) and compactness-uniqueness arguments based on observability inequalities (see also \cite{Zhang00}). Related references \cite{ImYamCom01}, \cite{ImYamIP01} and \cite{ImYamIP03}  use global Carleman estimates, but rather consider the case of interior or Dirichlet boundary data observation. Let us also mention the work \cite{BellassouedIP04} for logarithmic stability results when no geometric condition is fulfilled.


\paragraph{Discrete inverse problems.}

In this paper, we would like to address the question of the numerical computation of an approximation of the potential $p \in L^\infty(0,1)$, on which we assume the additional knowledge that its $L^\infty(0,1)$-norm is bounded by some constant $m>0$.

A natural approach is to find $p_h \in L^\infty_{\leq m}(0,1)$, or rather in a discrete version of it denoted by $L^\infty_{h,\leq m}(0,1)$ that will be made precise later (see \eqref{Linfini<m}), such that
\begin{equation}
	\label{ApproximationNaturelle}
	\partial_t \partial_x y_h[p_h]( t,1) \simeq \partial_t \partial_x y[p](t,1), \quad t \in (0,T),
\end{equation}
	where $y_h$ is the solution of a corresponding discrete wave equation with potential $p_h$ (here, $h>0$ refers to a discretization parameter) and the meaning of \eqref{ApproximationNaturelle} has to be clarified. The question is then the following: Does \eqref{ApproximationNaturelle} imply $p_h \simeq p$ ? Or, to be more precise, can we guarantee the convergence of the discrete potentials $p_h$ toward the continuous one $p$ ? 
	
Our analysis will focus on this precise convergence issue. To sum up in a very informal way our results, we will show that the convergence indeed holds true (Theorem~\ref{ThmCV}), provided a Tychonoff regularization process is introduced, and the key estimate is a stability estimate for the discrete inverse problem (Theorem~\ref{TCWE2}), given by appropriate global discrete Carleman estimates (Corollary~\ref{Corq} and Lemma \ref{LemDecompo}). \\

To be more precise, for $N \in \mathbb{N}$, set $h = 1/(N+1)$, and let us consider the following semi-discrete 1-d wave equation:
\begin{equation}
	\label{SDWE1}
	\left\{\begin{array}{ll}
		\partial_{tt}y_{j,h} - \left(\Delta_h y_{h}\right)_j+q_{j,h} y_{j,h}=g_{j,h}, \quad & t\in (0,T), \ j\in \llbracket 1,N \rrbracket,\\
		y_{0,h}(t)=g_h^0(t),\quad y_{N+1,h}(t)=g_h^1(t),& t\in [0,T],\\
		y_{j,h}(0)= y_{j,h}^0, \quad\partial_t y_{j,h}(0)= y_{j,h}^1, &  j\in \llbracket 1,N \rrbracket,
	\end{array}\right.
\end{equation}
where 
$$
(\Delta_h y_h)_j = \frac{1}{h^2}(y_{j+1,h} - 2 y_{j,h}+y_{j-1,h})
$$
denotes the classical finite-difference discretization of the Laplace operator and where $(y_{j,h}^0,y_{j,h}^1)$ are the initial sampled data at $x_j = jh$, $g_h^i\in L^2(0,T)$, $i=0,1$ and $ g_h \in L^1(0,T;L_h^2(0,1))$ are the boundary and source sampled data. Here and in the sequel, $L^2_h(0,1)$ denotes a discrete space endowed with a suitable discrete version of the $L^2(0,1)$-norm that will be introduced later (see \eqref{NormLp}). In the following, it will be important to sometimes underline the dependence of $y_h$ in \eqref{SDWE1} with respect to the potential $q_h$. This will be done using the notation $y_h[q_h]$.\\

Note that, using these notations, the discrete normal derivatives of $y_h[q_h]$ at the point $x = 1$ is naturally approximated by 
$$
	\frac{y_{N+1,h}[q_h] - y_{N,h}[q_h]}{h} = \frac{g_h^1(t)}{h} - \frac{y_{N,h}[q_h]}{h}.
$$
The fact that $g_h^1$ is known will allow us to simplify the difference of the discrete normal derivatives of $y_h[q_h]$ and $y_h[p_h]$ simply as
$$
	\frac{y_{N,h}[q_h]}{h} - \frac{y_{N,h}[p_h]}{h}.
$$

In order to prove the convergence of the inverse problem, we shall develop a Lax-type argument for the convergence of the numerical schemes that relies on:
\begin{itemize}
	\item {\bf Consistency:} If $p \in L^\infty_{\leq m } (0,1)$, there exists a sequence of discrete potentials $p_h \in L^\infty_{h,\leq m}(0,1)$ such that 
	\begin{align}
		\label{Consistency-1a}
		& p_h \underset{h \to 0}\longrightarrow p  \quad  \hbox{ in } L^2(0,1),
		\\
		\label{Consistency-1b}
		& \ds  \frac{\partial_t y_{N+1,h}[p_h] - \partial_t y_{N,h}[p_h]}{h} 
		\underset{h\to 0} \longrightarrow \partial_t\partial_x y[p](\cdot,1) \quad  \hbox{ in } L^2(0,T).
	\end{align}
	\item {\bf Uniform stability:} There exists a constant $C$ {\it independent of $h>0$} such that for all $(q_h, p_h) \in L^\infty_{h,\leq m} (0,1)^2$, 
		\begin{equation}
			\label{UniformStability}
			\norm{q_h-p_h}_{L_h^2(0,1)}\leq C \norm{ \frac{\partial_ty_{N,h}[q_h]}{h} - \frac{\partial_ty_{N,h}[p_h]}{h}}_{L^2(0,T)}.
		\end{equation} 
\end{itemize}
Of course, the consistency is the easiest part of the argument and will be detailed in Section~\ref{Cv}. The most difficult one comes from the stability estimate \eqref{UniformStability}. 

Actually, as we shall explain below, we will not get \eqref{UniformStability}, but we shall rather prove an estimate of the form
\begin{multline}
	\label{UniformStability-2}
	\norm{q_h-p_h}_{L_h^2(0,1)}\leq C \norm{ \frac{\partial_t y_{N,h}[q_h]}{h} - \frac{\partial_t y_{N,h}[p_h]}{h}}_{L^2(0,T)}\\
	 +C h \norm{\partial_h^+ \partial_{tt} y_h[q_h] -  \partial_h^+\partial_{tt} y_h[p_h]}_{L^2(0,T;L_h^2[0,1))},
\end{multline}
where 
$$
	(\partial_h^+ y_h)_j = \frac{y_{j+1,h} - y_{j,h}}{h},
$$
for some $C >0$ independent of $h >0$ - see Theorem \ref{TCWE2} for precise statements.

This is still compatible with the Lax argument: the added observation operator weakly converges to $0$ as $h \to 0$, since $h \partial_h^+$ is of norm bounded by $2$ on $L_h^2(0,1)$, and obviously converge to zero for smooth data. Therefore, in the limit $h \to 0$, this term disappears and \eqref{UniformStability-2} still yields~\eqref{StabilityContinuous}.

Of course, this should be taken into account into the consistency argument: Given $p \in L^\infty(0,1)$, one should find a sequence $p_h$ such that \eqref{Consistency-1a}--\eqref{Consistency-1b} hold and 
\begin{equation}
	\label{Consistency-2}
	h \partial_h^+\partial_{tt} y_h[p_h] \underset{h\to 0}{\longrightarrow} 0 \quad \hbox{in } L^2((0,T)\times (0,1)).
\end{equation}
We refer the reader to Theorem \ref{ThmConsistence} for precise assumptions and statements concerning the consistency.

The convergence result for the discrete inverse problems toward the continuous one is then given in Theorem \ref{ThmCV} and takes into account the previous comments. Roughly speaking, we will prove that, given any $p \in L^\infty(0,1)$ and any sequence $p_h \in L^\infty_{h, \leq m}(0,1)$ such that the convergences \eqref{Consistency-1b} and \eqref{Consistency-2} hold, the discrete potentials $p_h$ converge to $p$ in $L^2(0,1)$ as $h \to 0$. We refer to Section \ref{SubsecCv} for precise assumptions and statements.
\medskip

The proof of the uniform stability estimate \eqref{UniformStability-2} (see Theorem~\ref{TCWE2}) is based on a discrete Carleman estimate for \eqref{SDWE1}, which should be proved uniformly with respect to $h>0$ (see Corollary~\ref{Corq}). This is the main difficulty in our work. 

First, a discrete version of the continuous Carleman estimate yielding the stability \eqref{StabilityContinuous} cannot be true as it is. Indeed, that would contradict the results in \cite{GloLionsHe,InfZua,Zua05Survey,ErvZuaCime} that emphasize the lack of uniform observability of the discrete wave equations. This is due to the fact that the semidiscretization process that yields \eqref{SDWE1} creates spurious high-frequency solutions traveling at velocity of the order of $h$, see e.g. \cite{Tref,Mac2}. Hence, they cannot be observed in finite time uniformly with respect to $h>0$.

We shall therefore develop a discrete Carleman estimate for the discrete wave equation \eqref{SDWE1} which holds uniformly with respect to the discretization parameter $h>0$. We will use the same Carleman weights as in the continuous case. Though, the discrete integrations by parts will generate a term which cannot be handled directly. This will correspond to a term of the order of~$1$ at high-frequencies of the order of $1/h$, whereas it is small for frequencies of order less than $1/h$, thus being completely compatible with the continuous Carleman estimates and the analysis of the observability properties of the discrete wave equation. One can see \cite{Zua05Survey,ErvZuaCime} for review articles concerning that fact.

Uniform Carleman estimates for discrete equations have not been developed extensively so far. The only results we are aware of concern the elliptic case \cite{BoyerHubertLeRousseau,BoyerHubertLeRousseau2,BoyerHubertLeRousseau3} for applications to the controllability of discrete parabolic equations, in particular in \cite{BoyerHubertLeRousseau3}. More recently in \cite{ErvGournay}, discrete Carleman estimates have been derived for elliptic equations in order to prove uniform stability results for the discrete Calder\'on problems.

\paragraph{Outline.} The paper is organized as follows. Section~\ref{DCE} is devoted to the proof of discrete Carleman estimates for a 1-d semi-discrete wave operator. A uniform stability estimate for the related inverse problem is derived from it in  Section~\ref{InversePb}.  Convergence issues are finally detailed and proved in Section~\ref{Cv} and further comments are given in Section \ref{SecConclusion}.

\section{Discrete Carleman estimates}\label{DCE}

In this section, we establish uniform Carleman estimates for the semi-discrete wave operator. 

\subsection{Continuous case}

We recall here the global Carleman estimates for the continuous wave operator. 

Let $x^0 <0$, $s>0$, $\la>0$ and $\beta \in (0,1)$.  On $[-T,T]\times [0,1]$, we define the weight functions $\psi =\psi(t,x)$ and $\varphi =\varphi(t,x)$ 
as
\begin{eqnarray}
	\psi (t,x)= | x-x^0|^2 -\beta t^2+C_0,
	& & ~\varphi(t,x) = e^{\la \psi(t,x)},
	\label{varphi}
\end{eqnarray}
where $C_0>0$ is such that $\psi\geq 1$ on $[-T,T]\times[0,1].$

Let us begin by recalling the continuous Carleman estimate. This will make easier the comparisons with the forthcoming discrete ones:
 
\begin{theorem}[\cite{Baudouin01}]\label{CarlemanC}
Let $L w=\partial_{tt}w - \partial_{xx}w$, $T>0$ and $\beta \in (0,1)$.

There exist $\lambda_0>0$, $s_0>0$ and a constant $M=M(s_{0},\la_0,T,\beta,x^0) >0$ such that for all $s\geq s_{0}$, $\lambda \geq \lambda_0$ and $w$ satisfying
$$
\left\lbrace
	\begin{array}{lll}
		Lw \in L^2((-T,T) \times (0,1)),\\
		w \in L^2(-T,T;H^1_0(0,1)),\\
		w(\pm T,\cdot)=\partial_t w( \pm  T,\cdot)=0,
	\end{array}
\right.
$$
we have
\begin{multline}
	s\la \int_{-T}^{T} \int_0^1 \varphi e^{2s\varphi} \left (| \partial_t w|^2 + |\partial_x w |^2\right)\,dxdt 
	+ s^3\la^3\int_{-T}^{T} \int_0^1 \varphi^3 e^{2s\varphi}| w|^2\,dxdt 
		\\
	\leq M\int_{-T}^{T} \int_0^1 e^{2s\varphi}| Lw|^2\,dxdt 
	+ Ms\la \int_{-T}^{T} \varphi(t,1) e^{2s\varphi(t,1)}\left|\partial_x w(t,1)\right|^2\,dt.
	\label{CarlemC}
\end{multline}
\end{theorem}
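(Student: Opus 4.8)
The plan is to prove \eqref{CarlemC} by the classical conjugated-operator method for Carleman estimates adapted to the hyperbolic operator $L=\partial_{tt}-\partial_{xx}$. First I would introduce the conjugated variable $v=e^{s\varphi}w$ and compute $L_s v:=e^{s\varphi}L(e^{-s\varphi}v)=e^{s\varphi}Lw$, so that $\norm{L_s v}_{L^2}^2=\int e^{2s\varphi}|Lw|^2$ matches the first right-hand term. Using $\varphi=e^{\la\psi}$ and $\nabla_{t,x}\psi=(-2\beta t,\,2(x-x^0))$, the expansion produces the two second-order derivatives, first-order terms with coefficients of order $s\la\varphi$, and a zeroth-order term of order $s^2\la^2\varphi^2$. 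I would then split $L_s=P_1+P_2+R$, where $P_1$ gathers the formally self-adjoint part (the second-order derivatives together with the dominant $s^2\la^2\varphi^2$ term), $P_2$ the formally skew-adjoint part (the two first-order derivatives, of order $s\la\varphi$), and $R$ a remainder of strictly lower order in $(s,\la)$.

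The core of the argument is the inequality $\norm{L_s v}_{L^2}^2\ge 2\langle P_1 v,P_2 v\rangle_{L^2}+(\text{terms absorbed for }s,\la\text{ large})$, followed by the explicit computation of the cross term $2\langle P_1 v,P_2 v\rangle$ through integration by parts in $t$ and $x$. This yields a sum of volume integrals plus boundary integrals. Collecting the volume terms, I would show that, after fixing $\la$ large and then $s$ large, the dominant interior contribution is bounded below by a positive multiple of
\[
s\la\int_{-T}^{T}\!\!\int_0^1 \varphi e^{2s\varphi}\big(|\partial_t v|^2+|\partial_x v|^2\big)\,dx\,dt+s^3\la^3\int_{-T}^{T}\!\!\int_0^1 \varphi^3 e^{2s\varphi}|v|^2\,dx\,dt.
\]
This is the step where the geometry of the weight enters decisively: the hypothesis $\beta\in(0,1)$ is precisely the pseudoconvexity condition for $\psi$ with respect to the d'Alembertian, which, together with $x^0<0$ and $C_0$ chosen so that $\psi\ge 1$, makes the quadratic form in $(\partial_t v,\partial_x v)$ arising from the cross term positive definite once $\la$ is taken large. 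The relevant combinations controlling the signs are $(\partial_x\psi)^2-(\partial_t\psi)^2=4(x-x^0)^2-4\beta^2 t^2$ and $\partial_{xx}\psi-\partial_{tt}\psi=2(1+\beta)>0$. I expect this positivity computation, with the careful bookkeeping of every contribution generated by the two integrations by parts, to be the main obstacle.

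I would next handle the boundary integrals. The traces at $t=\pm T$ vanish because the conditions $w(\pm T,\cdot)=\partial_t w(\pm T,\cdot)=0$ transfer to $v$. On the lateral boundary, since $w\in L^2(-T,T;H^1_0(0,1))$, only the normal-derivative traces $\partial_x v(t,0)$ and $\partial_x v(t,1)$ survive. The choice $x^0<0$ gives $\partial_x\psi=2(x-x^0)>0$ throughout $[0,1]$, so the boundary term at $x=0$ comes with a favorable sign and can simply be discarded, whereas the term at $x=1$ is kept on the right-hand side and, after returning to $w$, furnishes exactly the observation term $Ms\la\int_{-T}^{T}\varphi(t,1)e^{2s\varphi(t,1)}|\partial_x w(t,1)|^2\,dt$.

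Finally I would absorb the remainder $R$ and the subprincipal cross-terms into the dominant volume integrals by enlarging $s_0$ and $\la_0$, and convert back from $v$ to $w$ via $v=e^{s\varphi}w$ and $\partial v=e^{s\varphi}(\partial w+s\,\partial\varphi\,w)$; the correction terms produced by this substitution are of lower order in $(s,\la)$ and are again absorbed, leaving the stated estimate \eqref{CarlemC}. The dependence of $M$ on $(s_0,\la_0,T,\beta,x^0)$ is tracked throughout these absorptions.
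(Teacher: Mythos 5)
Your overall strategy --- conjugating with $v=e^{s\varphi}w$, splitting the conjugated operator into a self-adjoint part $P_1$ (second-order terms plus the $s^2\la^2\varphi^2$ zeroth-order term), a skew-adjoint part $P_2$ (the $s\la\varphi$ first-order terms), and a lower-order remainder, then expanding $\langle P_1v,P_2v\rangle$ by integrations by parts and using $x^0<0$ to discard the favorably signed lateral term at $x=0$ while keeping the one at $x=1$ as the observation --- is exactly the method of \cite{Baudouin01}, to which the paper defers for this statement, and it is the scheme the paper itself carries out in detail in the discrete setting (Lemma~\ref{LemDecompo}).

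There is, however, one concrete error at the heart of your positivity step. You identify the sign-controlling quantities as $(\partial_x\psi)^2-(\partial_t\psi)^2$ and $\partial_{xx}\psi-\partial_{tt}\psi=2(1+\beta)>0$. The second is wrong. Carrying out the integrations by parts in $\langle \partial_{tt}v-\partial_{xx}v,\ -2s(\partial_t\varphi\,\partial_t v-\partial_x\varphi\,\partial_x v)\rangle$, the coefficient of $|\partial_t v|^2$ (and likewise of $|\partial_x v|^2$) at order $s\la$ is $\varphi(\partial_{tt}\psi+\partial_{xx}\psi)=2(1-\beta)\varphi$, not $\varphi(\partial_{xx}\psi-\partial_{tt}\psi)$; this is visible in the paper's own computations of $I_{12}$ and $I_{22}$ and in the lower bound \eqref{order1}, and the same factor $1-\beta$ reappears in the zeroth-order bound through the constant $c=16(1-\beta)\inf_{(0,1)}|x-x^0|^2$ of Step 2.1. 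The positivity of $\partial_{tt}\psi+\partial_{xx}\psi$ is precisely where the hypothesis $\beta\in(0,1)$ enters; with your combination $2(1+\beta)$ the estimate would appear to hold for every $\beta>0$, which is false, since the weight is no longer pseudoconvex for the d'Alembertian when $\beta\geq 1$. Once this is corrected, the remaining steps of your sketch (absorption of the remainder for $s,\la$ large, return from $v$ to $w$, tracking of $M$) go through as in the reference.
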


Carleman estimates for hyperbolic equations can be found in \cite{Im02} and we refer the reader to the bibliography therein for extensive references. The Carleman estimate stated here  can be seen as a more refined version of the one in \cite[Theorem 1.2]{Im02} but in the case of boundary observation and with the freedom on $\lambda$. For the proof of this Carleman estimate, we therefore refer to \cite{Baudouin01}.


\begin{remark}
	Note that the above Carleman estimate holds without any condition on $T$. This might be surprising but this should not be since we assume that $w (\pm T) = \partial_t w (\pm T) = 0$ and therefore, the corresponding unique continuation result is: If $w (\pm T) = \partial_t w (\pm T) = 0$, $w \in L^2(-T,T;H^1_0(0,1))$, $\partial_{tt} w - \partial_{xx} w = 0$ and $\partial_x w(\cdot,1) = 0$, then $w \equiv 0$.
\end{remark}

\subsection{Statement of the results}

In this section, we state uniform Carleman estimates for semi-discrete wave operators.

Of course, since we work in a semi-discrete framework, the space variable $x$ is now to be considered as taking only discrete values $x_j = jh\in [0,1]$ for $j\in\llbracket 0, \dots, N+1\rrbracket $ (recall that $h = 1/(N+1)$). Therefore, for continuous functions $f$ (e.g. with $\varphi$, $\psi$,...), we will write indifferently $f(x_j)$ or $f_j$. We will also add the subscript $h$ when we want to emphasize the dependence in the mesh size parameter $h>0$, but we shall remove it as soon as the context clearly underlines that we are working for one particular $h>0$.

Moreover, by analogy with the continuous case, we will use the following notations:
\begin{equation}\label{intf}
	\ds\int_{(0,1)} f_h = h \ds\sum_{j=1}^{N} f_{j,h} , 
	\quad
	 \ds\int_{[0,1)} f_h 	= h \ds\sum_{j=0}^{N} f_{j,h}, 
	 \quad 
	 \ds\int_{(0,1]} f_h 	= h \ds\sum_{j=1}^{N+1} f_{j,h},  .
\end{equation}
Note that it also defines in a natural way a discrete version of the $L^p(0,1)$-norms as follows: for $p\in[1, \infty)$, we introduce $L^p_h(0,1)$ (respectively $L^p_h([0,1))$) the space of discrete functions $f_h$ defined for $jh$, $j \in \llbracket 1, N\rrbracket$, (resp. $j \in \llbracket 0, N\rrbracket$) endowed with the norms
\begin{equation}
	\label{NormLp}
	\norm{f_h}_{L^p_h(0,1)}^p = 	\ds\int_{(0,1)} |f_h|^p  
	\quad \hbox{(resp. } \norm{f_h}_{L^p_h([0,1))}^p = 	\ds\int_{[0,1)} |f_h|^p \,\hbox{)},
\end{equation}
and, for $p = \infty$, 
\begin{equation}
	\label{NormLinfini}
	\norm{f_h}_{L^\infty_h(0,1)} = 	\sup_{ j \in \llbracket 1, N\rrbracket} |f_{j,h}| 
	\quad \hbox{(resp. } \norm{f_h}_{L^\infty_h([0,1))} = 	\sup_{j \in \llbracket 0, N\rrbracket} |f_{j,h}| \hbox{)}.
\end{equation}
By analogy with $L^\infty_{\leq m}(0,1)$, we also define
\begin{equation}
	\label{Linfini<m}
	L^\infty_{h,\leq m} (0,1)  = \left\{q_h = (q_{j,h})_{j \in \llbracket 1, N \rrbracket} \in L_h^\infty(0,1), 
	 \ s.t. \,\norm{q_{h}}_{L^\infty_h(0,1)} \leq m  \right\}.
\end{equation}
We shall also use in the sequel the following notations:
\begin{eqnarray*}
	(m_h v_h)_j =  \dfrac{v_{j+1,h} + 2 v_{j,h}+v_{j-1,h}}{4} ~; &
	&
	 (m^+_h v_h)_j  = (m_h^- v_h )_{j+1} = \dfrac{v_{j+1,h} + v_{j,h}}{2}   ~;
	\\
	(\partial_h v_h)_{j} = \dfrac{v_{j+1,h} -v_{j-1,h}}{2h}~;& 
	&
	(\partial^+_h v_h)_{j} = (\partial^-_h v_h)_{j+1} = \dfrac{v_{j+1,h} -v_{j,h}}{h}  ~;
	\\
	(\Delta_h v_h)_j & = &\dfrac{v_{j+1,h} - 2 v_j+v_{j-1,h}}{h^2}.
\end{eqnarray*}

One of the main results of this paper is the following discrete Carleman estimates:
\begin{theorem}\label{CarlemanD}
Let $L_h w_h=\partial_{tt}w_h - \Delta_h w_h$, $T>0$ and $\beta \in (0,1)$.
\\
There exist $s_0>0$, $\lambda>0$, $\varepsilon >0$, $h_0>0$ and a constant $M=M(s_0,\la, T, \varepsilon,\beta) >0$ independent of $h>0$ such that for all $ h\in (0,h_0)$ and $ s\in \left(s_{0},\varepsilon/ h\right)$, for all $w_h$ satisfying
$$
	\left\lbrace
		\begin{array}{l}
			L_h w_h \in L^2(-T,T; L^2_h(0,1)),
			\\
			w_{0,h}(t) = w_{N+1,h}(t) = 0 \quad \textit{ on }  (-T,T),
			\\	
			w_h(\pm T) =  \partial_t w_h (\pm T) =   0,
		\end{array}
	\right.
$$
we have 
\begin{gather}
	s \int_{-T}^{T} \int_{(0,1)} e^{2s\varphi}| \partial_t w_h|^2 \, dt
	+
	s \int_{-T}^{T} \int_{[0,1)} e^{2s\varphi} |\partial_h^+ w_h |^2\, dt
	+ s^3\int_{-T}^{T} \int_{(0,1)} e^{2s\varphi}| w_h |^2 \, dt
	\nonumber \\
	\leq M\int_{-T}^{T} \int_{(0,1)} e^{2s\varphi}| L_h w_h|^2 \, dt  + 
	Ms \int_{-T}^{T} e^{2s\varphi(t,1)} \left|(\partial^-_h w_h)_{N+1} \right|^2 \, dt
	\label{CarlemD}\\
	+ Ms \int_{-T}^{T} \int_{[0,1)} e^{2s\varphi} |h\partial_h^+\partial_t w_h |^2\, dt  \nonumber
\end{gather}
for $\varphi$ given by \eqref{varphi}.
\end{theorem}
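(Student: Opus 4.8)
The plan is to adapt the classical \emph{conjugation-and-splitting} proof of the continuous estimate of Theorem~\ref{CarlemanC} to the semi-discrete framework, keeping the parameter $\lambda$ \emph{fixed} once and for all. Since $\lambda$ is fixed, the weight $\varphi = e^{\lambda\psi}$ and all its $t$- and $x$-derivatives are bounded above and below by positive constants on $[-T,T]\times[0,1]$; this is exactly why the factors $\varphi$ and $\varphi^3$ of the continuous estimate \eqref{CarlemC} may now be absorbed into the constant $M$ and disappear from \eqref{CarlemD}. First I would set $\theta = e^{s\varphi}$ and introduce the conjugated unknown $v_h = \theta\, w_h$. The homogeneous lateral conditions $w_{0,h} = w_{N+1,h} = 0$ and the terminal conditions $w_h(\pm T) = \partial_t w_h(\pm T) = 0$ transfer verbatim to $v_h$, and \eqref{CarlemD} becomes equivalent to a weighted inequality for the conjugated operator $P_h v_h := \theta\, L_h(\theta^{-1} v_h)$, the weights $e^{2s\varphi}$ on both sides being produced by the substitution.

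Next I would expand $P_h$. Since time stays continuous, the $\partial_{tt}$ part conjugates exactly as in the continuous case, producing $\partial_{tt} v - 2s(\partial_t\varphi)\partial_t v + (s^2(\partial_t\varphi)^2 - s\partial_{tt}\varphi)v$. For the discrete Laplacian one computes $\theta_j(\Delta_h(\theta^{-1}v))_j = h^{-2}\bigl[e^{\alpha_j^+}v_{j+1} - 2v_j + e^{\alpha_j^-}v_{j-1}\bigr]$ with $\alpha_j^{\pm} = s(\varphi_j - \varphi_{j\pm1}) = \mp sh(\partial_h^{\pm}\varphi)_j$. The constraint $s < \varepsilon/h$ guarantees $|\alpha_j^\pm| \le C\varepsilon$, so the exponentials may be expanded: the terms of order $0$ and $2$ in $\alpha^\pm$ rebuild the principal discrete Carleman symbol (the analogues of $-\Delta_h v$ and of the zeroth-order term $\sim s^2(\partial_x\varphi)^2 v$), the term of order $1$ yields the first-order transport part $\sim s(\partial_x\varphi)\partial_h v$ plus discrete corrections, and all remaining contributions carry at least one extra power of $\alpha^\pm \sim sh$ together with discrete first differences. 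I would then write $P_h = P_1 + P_2 + R$, with $P_1$ (formally) self-adjoint, $P_2$ (formally) skew-adjoint for the discrete inner product associated with \eqref{intf}, and $R$ a remainder gathering the terms that are lower order in $s$ or carry extra factors $sh$, using systematically the discrete Leibniz and summation-by-parts identities attached to the operators $m_h^{\pm}$, $\partial_h^{\pm}$.

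The heart of the proof is the computation of the cross term $2\langle P_1 v_h, P_2 v_h\rangle$ by discrete summation by parts in $x$ and integration by parts in $t$. This produces: (i) the dominant \emph{interior} positive contribution, bounded below (for $\lambda$ fixed large enough and $s \ge s_0$) by $s\int\!\!\int e^{2s\varphi}(|\partial_t w|^2 + |\partial_h^+ w|^2) + s^3\int\!\!\int e^{2s\varphi}|w|^2$, i.e. the left-hand side of \eqref{CarlemD}; (ii) \emph{spatial boundary} terms at $j=0$ and $j=N+1$ — thanks to $x^0 < 0$ we have $\partial_x\varphi > 0$ on $[0,1]$, so the contribution at $j=0$ has the favourable sign and is discarded, while the one at $j=N+1$ is exactly the observation term $s\int e^{2s\varphi(t,1)}|(\partial_h^- w_h)_{N+1}|^2$; (iii) \emph{temporal boundary} terms at $t=\pm T$, which vanish by the terminal conditions; and (iv) all the error terms coming from $R$ and from the discrete corrections. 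Choosing $s_0$ large absorbs the genuinely lower-order errors into the dominant terms, and choosing $\varepsilon$ (hence $h_0$) small makes the $sh$-type discrete corrections small enough to be absorbed as well.

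The main obstacle, and the reason \eqref{CarlemD} carries the extra right-hand term $Ms\int\!\!\int e^{2s\varphi}|h\partial_h^+\partial_t w_h|^2$, is precisely one family of discrete corrections that cannot be absorbed. Unlike in the continuous computation, where every remainder is lower order in $s$ and hence swallowed for $s$ large, the discrete summation by parts generates a mixed space-time term that behaves like $s\langle \partial_t v_h,\ (\text{discrete second difference of } v_h)\rangle$ with a coefficient of size $sh$; this term is genuinely of order $1$ at the spurious frequencies of order $1/h$ and is only small at frequencies much smaller than $1/h$, so no choice of $s_0,\varepsilon,h_0$ can absorb it into the left-hand side. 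The natural bound for it, via Young's inequality and the boundedness of $h\partial_h^+$ and of $m_h^\pm$ on $L_h^2(0,1)$, is exactly $s\int\!\!\int e^{2s\varphi}|h\partial_h^+\partial_t w_h|^2$, which I therefore keep on the right-hand side. Once this single term is isolated, collecting (i)--(iv), returning from $v_h$ to $w_h$ (the weights $e^{2s\varphi}$ and their discrete differences being comparable up to the controlled $sh$ corrections), and fixing $\lambda$, then $s_0$ large and $\varepsilon, h_0$ small, yields \eqref{CarlemD} with $M$ independent of $h$.
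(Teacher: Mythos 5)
Your proposal follows essentially the same route as the paper: conjugation by $e^{s\varphi}$ with $\lambda$ fixed, expansion of the conjugated operator with all discrete corrections controlled by $\mathcal{O}_\lambda(sh)$ under the constraint $sh\leq\varepsilon$, a splitting $P_h=P_{h,1}+P_{h,2}+R_h$ followed by the cross-product computation with discrete summation by parts, the favourable sign of the boundary term at $j=0$ versus the observation term at $j=N+1$, and the correct identification of the single non-absorbable discrete correction that forces the extra term $Ms\int\!\!\int e^{2s\varphi}|h\partial_h^+\partial_t w_h|^2$. The only (cosmetic) difference is that you expand the conjugated discrete Laplacian directly through the exponentials $e^{\alpha_j^\pm}$, whereas the paper organizes the same expansion via the discrete Leibniz rule and integral (Taylor) representations of the coefficients $\partial_h\rho/\rho$ and $\Delta_h\rho/\rho$.
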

The proof of Theorem \ref{CarlemanD} will be given at the end of Section~\ref{CARL}. \\
The following remarks are in order:

$\bullet$
	The weight function $\varphi$ in the above discrete Carleman estimate is the same as for the continuous one. 

$\bullet$
	In Theorem \ref{CarlemanD}, the parameter $\lambda$ is fixed, whereas it is not in the continuous Carleman estimate of Theorem \ref{CarlemanC}. Looking carefully at the proof of Theorem \ref{CarlemanD}, one can prove that there exists $\lambda_0$ such that, for all $\lambda \geq \lambda_0$, there exist $\varepsilon(\lambda)>0$ and $M = M(\lambda)$ such that \eqref{CarlemD} holds for all $s \geq s_0(\lambda)$ and $sh \leq \varepsilon(\lambda)$. These dependences of $\varepsilon$ and $M$ on $\lambda$ are very intricate and we did not manage to follow it precisely.

$\bullet$ The fact that $M$ is independent of $h>0$ is of major importance in the applications we have in mind. This is very similar to the observability properties of discrete wave equations  for which one should prove observability results uniformly with respect to the discretization parameter(s), otherwise the discrete controls (obtained by duality from the discrete observability properties) may diverge, see e.g. \cite{ErvZuaCime}.

$\bullet$
	The range of $s$ in Theorem \ref{CarlemanD} is limited to $s \leq \varepsilon/h$. This is a technical assumption, that is not surprising when comparing it to \cite{BoyerHubertLeRousseau,BoyerHubertLeRousseau2}. Indeed, for $s$ of the order of $1/h$, $e^{s \varphi}$ is a high-frequency function of frequency of the order of $1/h$ and therefore it does not reflect anymore  the dynamics of the continuous wave operator.

$\bullet$
	A new term appears in the right hand side of \eqref{CarlemD}, which cannot be absorbed by the terms of the left hand side. Though, this term is needed and cannot be removed. Otherwise, one could obtain a uniform observability result for the discrete wave equation, a fact which is well-known to be false according to \cite{InfZua}. Besides, this extra term is of the order of one for frequencies of the order of $1/h$, whereas it can be absorbed by the left hand side for frequencies of smaller order. According to \cite{ErvZuaCime}, this indicates that the extra term in estimate \eqref{CarlemD} has the right scale.\\

Note that in the application we have in mind, we shall not use directly the Carleman estimate \eqref{CarlemD} which involves the wave equation without a potential but rather one in which a $L^\infty$ potential is allowed.
Indeed, we have the following corollary:
\begin{corollary}\label{Corq}
Let $T>0$ and $\beta \in (0,1)$. 
Let $m >0$, $q_h \in L^\infty_{h,\leq m} (0,1)$ and $~L_h[q_h] w_h=\partial_{tt}w_h - \Delta_h w_h + q_h w_h$.
\\
There exist $s_0>0$, $\lambda>0$, $\varepsilon >0$, $h_0>0$ and a constant $M=M(s_0,\la, T,m, \varepsilon,\beta) >0$ such that for all $ h\in (0,h_0)$ and for all $ s\in \left(s_{0}, \varepsilon/ h\right)$, for all $w_h$ satisfying
$$
	\left\lbrace
		\begin{array}{l}
		L_h[q_h] w_h \in L^2(-T,T; L^2_h(0,1)),\\
		w_{0,h}(t) = w_{N+1,h}(t) = 0 \quad \textit{ on } (-T,T),\\
		w_h(\pm T) =  \partial_t w_h (\pm T) =   0,
	\end{array}
	\right.
$$
we have:
\begin{gather}
	s \int_{-T}^{T} \int_{(0,1)} e^{2s\varphi}| \partial_t w_h|^2 \, dt
	+s \int_{-T}^{T} \int_{[0,1)} e^{2s\varphi} |\partial_h^+ w_h |^2 \, dt
	+ s^3\int_{-T}^{T} \int_{(0,1)} e^{2s\varphi}| w_h|^2   \, dt
 \nonumber \\
	\leq
	 M\int_{-T}^{T} \int_{(0,1)} e^{2s\varphi}| L_h[q_h] w_h |^2 \, dt
	 + 	
	Ms \int_{-T}^{T} e^{2s\varphi(t,1)} \left|(\partial^-_h w_h)_{N+1} \right|^2 \, dt
	\label{CarlemDq}\\
	+ Ms \int_{-T}^{T} \int_{[0,1)} e^{2s\varphi} |h\partial_h^+\partial_t w_h |^2 \, dt, \nonumber
\end{gather}
for $\varphi$ given by \eqref{varphi}.
\end{corollary}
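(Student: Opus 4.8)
The plan is to obtain Corollary~\ref{Corq} from the potential-free Carleman estimate of Theorem~\ref{CarlemanD} by a standard perturbation argument, treating $q_h w_h$ as a lower-order term that can be absorbed by the left-hand side of \eqref{CarlemD}. The starting point is the identity
$$
	L_h w_h = L_h[q_h] w_h - q_h w_h .
$$
If $w_h$ satisfies the hypotheses of Corollary~\ref{Corq}, then it also satisfies those of Theorem~\ref{CarlemanD}: the boundary and terminal conditions are identical, and since $q_h \in L^\infty_{h,\leq m}(0,1)$ one has the pointwise bound $\norm{q_h w_h}_{L^2_h(0,1)} \leq m \norm{w_h}_{L^2_h(0,1)}$, so that $q_h w_h \in L^2(-T,T;L^2_h(0,1))$ (here $w_h$ lies in the natural discrete energy space for fixed $h$) and hence $L_h w_h \in L^2(-T,T;L^2_h(0,1))$.

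I would then apply Theorem~\ref{CarlemanD} to $w_h$, keeping the same $\lambda$, $\varepsilon$ and $h_0$, and estimate the source term on its right-hand side by $|L_h w_h|^2 \leq 2 |L_h[q_h] w_h|^2 + 2 |q_h w_h|^2 \leq 2 |L_h[q_h] w_h|^2 + 2 m^2 |w_h|^2$. This gives
$$
	M \!\int_{-T}^T\! \int_{(0,1)} \!\! e^{2s\varphi} |L_h w_h|^2 \, dt
	\leq 2M \!\int_{-T}^T\! \int_{(0,1)} \!\! e^{2s\varphi} |L_h[q_h] w_h|^2 \, dt
	+ 2 M m^2 \!\int_{-T}^T\! \int_{(0,1)} \!\! e^{2s\varphi} |w_h|^2 \, dt ,
$$
where the first term on the right is already of the form required in \eqref{CarlemDq}, while the boundary observation term and the extra high-frequency term $Ms\int e^{2s\varphi}|h\partial_h^+\partial_t w_h|^2$ carry over verbatim.

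The key step is to absorb the zeroth-order contribution $2Mm^2\int e^{2s\varphi}|w_h|^2$ into the term $s^3\int e^{2s\varphi}|w_h|^2$ present on the left-hand side of \eqref{CarlemD}. This is possible precisely because the latter carries the weight $s^3$ while the perturbation only carries the constant $2Mm^2$: enlarging the threshold to $s_0' = \max\!\big(s_0, (4Mm^2)^{1/3}\big)$ guarantees $2Mm^2 \leq s^3/2$ for all $s \geq s_0'$, hence $(s^3 - 2Mm^2)\int e^{2s\varphi}|w_h|^2 \geq \tfrac{s^3}{2}\int e^{2s\varphi}|w_h|^2$. Multiplying the resulting inequality by $2$ restores the coefficient $s^3$ in front of $\int e^{2s\varphi}|w_h|^2$ and only strengthens the $\partial_t w_h$ and $\partial_h^+ w_h$ terms, so that \eqref{CarlemDq} follows with a new constant that may be taken as $4M$, now depending on $m$ through $s_0'$ and the factor $m^2$. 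Crucially, $\lambda$, $\varepsilon$ and $h_0$ are inherited unchanged from Theorem~\ref{CarlemanD}, so the uniformity in $h$ — the whole point of the statement — is preserved.

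I do not expect a serious obstacle here, since this is the discrete counterpart of the classical way of incorporating an $L^\infty$ potential into a Carleman estimate. The only point requiring care is that the absorption forces the threshold $s_0$ to depend on $m$ (and on the constant $M$ of Theorem~\ref{CarlemanD}), which is harmless for the applications; one should merely observe that this does not disturb the range restriction $s \in (s_0,\varepsilon/h)$, as the perturbation estimate is uniform in both $s$ and $h$ and the $s$-window is simply intersected with $\{s \geq s_0'\}$.
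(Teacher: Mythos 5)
Your argument is correct and is essentially identical to the paper's own proof: both write $L_h w_h = L_h[q_h]w_h - q_h w_h$, apply Theorem~\ref{CarlemanD}, bound $|L_h w_h|^2 \leq 2|L_h[q_h]w_h|^2 + 2m^2|w_h|^2$, and absorb the zeroth-order term into the $s^3$ term on the left by enlarging $s_0$. Your write-up merely makes explicit the choice of the new threshold $s_0'$ and the preservation of $\lambda$, $\varepsilon$ and $h_0$, which the paper leaves implicit.
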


\begin{proof}
This is a simple consequence of Theorem \ref{CarlemanD}, since $L_h w_h = L_h[q_h] w_h - q_h w_h$ 
with $q_h \in L^\infty_{h,\leq m} (0,1)$ leads to
$$
	\int_{-T}^{T} \int_{(0,1)} e^{2s\varphi}| L_h w_h |^2 \, dt  \leq 2 	\int_{-T}^{T} \int_{(0,1)} e^{2s\varphi}| L_h[q_h] w_h |^2 \, dt 
	+ 2 m^2 \int_{-T}^{T} \int_{(0,1)} e^{2s\varphi}|w_h |^2 \, dt .
$$
This last term can be absorb by the left hand side of \eqref{CarlemD} by choosing $s$ large enough. This immediately yields \eqref{CarlemDq}.
\end{proof}

Until the end of this section, we shall work for $h>0$ fix. We therefore omit the indexes $h$ on the discrete functions to simplify notations. 

\subsection{Basic discrete identities}

Below, we list several preliminary dentities that will be extensively used in the sequel.
Let us begin with easy identities left to the reader:
\begin{lemma} \label{ID}
The following identities hold:
\begin{eqnarray}
\dfrac{a_1b_1+a_2b_2}2 \hspace{-2ex}& =&  \hspace{-2ex}\left(\dfrac{a_1+a_2}2\right)  \left(\dfrac{b_1+b_2}2\right) + \dfrac{h^2}4\left(\dfrac{a_1-a_2}h\right)  \left(\dfrac{b_1-b_2}h\right) ;
\label{utile+}
	\\
\dfrac{a_1b_1-a_2b_2}h \hspace{-2ex}&= & \hspace{-2ex} \left(\dfrac{a_1-a_2}h\right)   \left(\dfrac{b_1+b_2}2\right)  +  \left(\dfrac{a_1+a_2}2\right)   \left(\dfrac{b_1-b_2}h\right).\label{utile-}
\end{eqnarray}
\end{lemma}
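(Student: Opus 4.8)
The two identities in Lemma~\ref{ID} are purely algebraic, so the plan is to verify each one by expanding both sides and comparing. Since the statement carries no hypotheses on $a_1,a_2,b_1,b_2,h$ beyond $h\neq 0$, no analysis is needed; everything reduces to elementary manipulation of products of scalars.

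For the first identity \eqref{utile+}, I would expand the right-hand side. The product of averages is
\[
	\left(\frac{a_1+a_2}{2}\right)\left(\frac{b_1+b_2}{2}\right)
	= \frac{a_1b_1 + a_1 b_2 + a_2 b_1 + a_2 b_2}{4},
\]
while the second term simplifies as
\[
	\frac{h^2}{4}\left(\frac{a_1-a_2}{h}\right)\left(\frac{b_1-b_2}{h}\right)
	= \frac{(a_1-a_2)(b_1-b_2)}{4}
	= \frac{a_1b_1 - a_1 b_2 - a_2 b_1 + a_2 b_2}{4}.
\]
Adding the two lines, the cross terms $a_1b_2$ and $a_2b_1$ cancel, leaving $\frac{2a_1b_1 + 2a_2b_2}{4} = \frac{a_1b_1 + a_2b_2}{2}$, which is exactly the left-hand side. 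This is a discrete analogue of the Leibniz-type splitting of a product, and the factor $h^2/4$ is precisely what makes the $1/h$ factors in the difference quotients cancel.

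For the second identity \eqref{utile-}, the same strategy applies: expand the right-hand side directly. One gets
\[
	\left(\frac{a_1-a_2}{h}\right)\left(\frac{b_1+b_2}{2}\right)
	= \frac{a_1 b_1 + a_1 b_2 - a_2 b_1 - a_2 b_2}{2h},
\]
and
\[
	\left(\frac{a_1+a_2}{2}\right)\left(\frac{b_1-b_2}{h}\right)
	= \frac{a_1 b_1 - a_1 b_2 + a_2 b_1 - a_2 b_2}{2h}.
\]
Summing these, the terms $a_1 b_2$ and $a_2 b_1$ cancel and one is left with $\frac{2a_1 b_1 - 2a_2 b_2}{2h} = \frac{a_1 b_1 - a_2 b_2}{h}$, matching the left-hand side. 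This is the discrete product rule for the difference operator $\partial_h^+$, mirroring $(fg)' = f'g + fg'$.

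There is no genuine obstacle here: both claims are finite algebraic identities and the only care required is bookkeeping of signs during the expansion, together with observing that the mixed terms cancel in each case. The authors themselves flag these as ``easy identities left to the reader,'' so the entire content is the two short expansions above; the real interest is conceptual, namely that \eqref{utile+} and \eqref{utile-} are the discrete counterparts of, respectively, the identity relating the average of a product to the product of averages and the Leibniz rule, which is why they will be the workhorses for the discrete integrations by parts later in the Carleman estimate.
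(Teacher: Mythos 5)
Your proof is correct, and since the paper leaves these identities to the reader, your direct expansion of both right-hand sides with cancellation of the cross terms is exactly the intended (and essentially only) argument. Nothing to add.
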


Using these identities, one can obtain the next lemma:
\begin{lemma} \label{IPP}
The following identities hold:
\begin{align}
	m^+_h  &= I + \frac h2 \partial_h^+ ; 
	\quad m_h  = I+\frac{h^2}4 \Delta_h = m^+_h m^-_h~;
	\label{mhv}
	\\
	\partial_h  &= \dfrac 12 (\partial^+_h  + \partial^-_h )
	 = m^+_h \partial_h^-  =\partial_h^- m^+_h  =  m^-_h \partial_h^+~ = \partial_h^+ m^-_h;
	\label{dhv}
	\\
	\Delta_h  &=\partial^+_h\partial^-_h = \partial^-_h\partial^+_h~;
	\label{Dhv}
	\\
	m^+_h (uv) &= (m^+_h u)(m^+_h v) +\frac{h^2}4 (\partial ^+_h u) (\partial^+_h v)~;
	\label{mhuv}
	\\
	\partial^\pm_h (uv) &= (\partial^\pm_h u)(m^\pm_h v) + (m^\pm_h u) (\partial^\pm_h v)~;
	\label{dhuv}
	\\
	\Delta_h (\rho v) &= (\Delta_h \rho) \, (m_h v)  + 2(\partial_h \rho)\, ( \partial_h v) + (m_h \rho)\, (\Delta_h v).
	\label{Dhrv}
\end{align}
\end{lemma}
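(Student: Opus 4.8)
The plan is to verify each identity by unwinding the definitions of the discrete operators, organizing the work from the purely linear identities to the Leibniz-type product rules. The first three lines \eqref{mhv}--\eqref{Dhv} involve only linear combinations of the shift operators $(S^\pm v)_j = v_{j\pm 1}$, and since every operator in play ($m_h$, $m_h^\pm$, $\partial_h$, $\partial_h^\pm$, $\Delta_h$) is a polynomial in $S^+$ and $S^-$, they all commute; hence the various factorizations need only be checked on a generic index. Concretely, I would compute $(I+\tfrac h2\partial_h^+)v_j = v_j + \tfrac{v_{j+1}-v_j}{2} = \tfrac{v_{j+1}+v_j}{2} = (m_h^+ v)_j$ and $(I+\tfrac{h^2}4\Delta_h)v_j = \tfrac{v_{j+1}+2v_j+v_{j-1}}{4} = (m_h v)_j$, then observe $(m_h^+ m_h^- v)_j = \tfrac12\big(\tfrac{v_{j+1}+v_j}{2}+\tfrac{v_j+v_{j-1}}{2}\big) = (m_h v)_j$, giving \eqref{mhv}. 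Identity \eqref{dhv} follows the same way: $\tfrac12(\partial_h^+ + \partial_h^-)v_j = \tfrac{v_{j+1}-v_{j-1}}{2h} = (\partial_h v)_j$, and each of the four compositions $m_h^+\partial_h^-$, $\partial_h^- m_h^+$, $m_h^-\partial_h^+$, $\partial_h^+ m_h^-$ reduces to $\tfrac{v_{j+1}-v_{j-1}}{2h}$ by the same one-line computation. Likewise $\partial_h^+\partial_h^- v_j = \tfrac{1}{h}\big(\tfrac{v_{j+1}-v_j}{h}-\tfrac{v_j-v_{j-1}}{h}\big)=(\Delta_h v)_j$, and symmetrically for $\partial_h^-\partial_h^+$, which is \eqref{Dhv}.

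The two first-order product rules \eqref{mhuv}--\eqref{dhuv} are immediate consequences of Lemma \ref{ID}. For \eqref{mhuv}, I would apply \eqref{utile+} pointwise with $a_1=u_{j+1}$, $a_2=u_j$, $b_1=v_{j+1}$, $b_2=v_j$, so that the left-hand side $(m_h^+(uv))_j = \tfrac{u_{j+1}v_{j+1}+u_j v_j}{2}$ splits exactly as $(m_h^+u)_j(m_h^+v)_j + \tfrac{h^2}4(\partial_h^+u)_j(\partial_h^+v)_j$. For \eqref{dhuv} with the $+$ sign I would use \eqref{utile-} with the same choice of $a_i,b_i$, turning $\tfrac{u_{j+1}v_{j+1}-u_j v_j}{h}$ into $(\partial_h^+u)_j(m_h^+v)_j+(m_h^+u)_j(\partial_h^+v)_j$; the $-$ version is obtained identically by instead applying \eqref{utile-} with $a_1=u_j$, $a_2=u_{j-1}$, $b_1=v_j$, $b_2=v_{j-1}$.

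The last identity \eqref{Dhrv} is the only one requiring a genuine computation, and it is where I would spend the effort. Rather than expanding $\Delta_h(\rho v)$ by brute force, I would exploit the factorization $\Delta_h=\partial_h^-\partial_h^+$ from \eqref{Dhv} and apply the Leibniz rule \eqref{dhuv} twice. First, $\partial_h^+(\rho v)=(\partial_h^+\rho)(m_h^+v)+(m_h^+\rho)(\partial_h^+v)$; then applying $\partial_h^-$ and \eqref{dhuv} to each summand produces four terms of the form $(\partial_h^-\partial_h^+\rho)(m_h^-m_h^+v)$, $(m_h^-\partial_h^+\rho)(\partial_h^-m_h^+v)$, $(\partial_h^-m_h^+\rho)(m_h^-\partial_h^+v)$ and $(m_h^-m_h^+\rho)(\partial_h^-\partial_h^+v)$. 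The point is that each composite operator collapses by the already-proved identities: $m_h^-m_h^+=m_h$ by \eqref{mhv}, $\partial_h^-m_h^+=m_h^-\partial_h^+=\partial_h$ by \eqref{dhv}, and $\partial_h^-\partial_h^+=\Delta_h$ by \eqref{Dhv}. The two mixed terms thus coincide, each equal to $(\partial_h\rho)(\partial_h v)$, and summing yields exactly $(\Delta_h\rho)(m_hv)+2(\partial_h\rho)(\partial_hv)+(m_h\rho)(\Delta_hv)$. The mild obstacle here is purely bookkeeping — correctly tracking which averaging and difference operators attach to $\rho$ versus $v$ at the second differentiation — but no new identity beyond \eqref{mhv}--\eqref{dhuv} is needed, so the whole lemma is elementary and self-contained.
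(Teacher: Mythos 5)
Your proof is correct and follows essentially the same route as the paper: direct pointwise verification of \eqref{mhv}--\eqref{Dhv}, reduction of \eqref{mhuv}--\eqref{dhuv} to Lemma \ref{ID}, and derivation of \eqref{Dhrv} by writing $\Delta_h=\partial_h^-\partial_h^+$, applying \eqref{dhuv} twice, and collapsing the composite operators via the earlier identities. Nothing is missing.
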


\begin{proof}
To begin with, one easily obtains \eqref{mhv}, since
$$
	\left\{\begin{array}{l}
		\ds (m^+_h v)_j =  \dfrac{v_{j+1} + v_{j}}{2} = v_j + \frac h2  \dfrac{v_{j+1} -v_{j}}{h} = v_j + \frac h2 (\partial_h^+v)_j,
		\\
		\ds \left(m_hv\right)_j =  \dfrac{v_{j+1} + 2 v_ j+ v_{j-1}}{4}  = v_j +  \dfrac{v_{j+1} - 2 v_ j+ v_{j-1}}{4 } = v_j + \dfrac{h^2}4 \left(\Delta_h v \right)_j.
	\end{array}\right.
$$
Similar computations left to the readers yield \eqref{dhv} and \eqref{Dhv}.

Identities \eqref{mhuv}--\eqref{dhuv} are straightforward consequences of the formula of Lemma \ref{ID}. To get \eqref{Dhrv}, we do as follows:
\begin{align*}
	\Delta_h (\rho v) &= \partial^-_h\left( \partial^+_h (\rho v)\right)\\
	&=\partial^-_h\left( (\partial^+_h \rho)(m^+_h v) + (m^+_h \rho) (\partial^+_h v) \right)\\
	&=(\partial^-_h \partial^+_h \rho)(m^-_hm^+_h v)  + (m^-_h \partial^+_h \rho)(\partial^-_h m^+_h v) 
		+ (\partial^-_hm^+_h \rho) (m^-_h\partial^+_h v) + (m^-_hm^+_h \rho) (\partial^-_h\partial^+_h v)\\
	&=(\Delta_h \rho)(m_h v)  + 2 ( \partial_h \rho)(\partial_h v)  + (m_h\rho) (\Delta_h v).
	\end{align*}
Note that this should of course be compared to the corresponding classical Leibniz formula $\Delta(\rho v) = v\Delta\rho + 2 \nabla\rho\cdot\nabla v + \rho \Delta v$.
\end{proof}

We now explain how discrete integrations by parts work:
\begin{lemma}[Discrete integration by parts formula]
	\label{LemIPP2}
	Let $v,f,g$ be discrete functions such that $v_0=v_{N+1}=0$. Then we have the following identities:
	\begin{align}
		&\bullet~ \int_{[0,1)} g (\partial_h^+ f )=  - \int_{(0,1]} (\partial_h^- g) f  + g_{N+1} f_{N+1}  - g_0 f_0 ~ ;
	\label{IPP00} 
		\\
		&\bullet~  \int_{(0,1)} g (\partial_h f )=  \int_{[0,1)} (m_h^+ g)(\partial_h^+ f)
		 - \frac{h}{2} g_0 (\partial_h^+ f)_0- \frac{h}{2} g_{N+1} (\partial_h^- f)_{N+1} ~ ;
	\label{IPP000} 
		\\
		&\bullet~   2 \int_{(0,1)} g v(\partial_h v)  = - \int_{(0,1)}|v|^2 ~\partial_h g 
		+ \dfrac{h^2}2 \int_{[0,1)} |\partial_h^+ v|^2 \partial_h^+g ~ ;
	\label{IPP0}\\
		 &\bullet~ \int_{(0,1)} g(\Delta_h v)  = - \int_{[0,1)}(\partial^+_h v) ~(\partial^+_h g)
		 - (\partial_h^+ v)_0 g_0+  (\partial_h^- v)_{N+1} g_{N+1} ~ ;
	\label{IPP1}\\
		 &\bullet~ \int_{(0,1)} g v (\Delta_h v)  = - \int_{[0,1)}(\partial^+_h v)^2 ~(m^+_h g) + \frac{1}{2} \int_{(0,1)} |v|^2 \Delta_h g ~ ;
		 \label{IPP1bis}\\
		 &\bullet~ \int_{(0,1)} g\Delta_h v \partial_h v =- \dfrac 12\int_{[0,1)} |\partial_h^+ v|^2 \partial_h^+g 
		 + \dfrac 12 \left|(\partial_h^- v)_{N+1} \right|^2 g_{N+1} - \dfrac 12\left|(\partial_h^+ v)_0 \right|^2 g_{0}.
		\label{IPP2}
	\end{align}
\end{lemma}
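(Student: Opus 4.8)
The plan is to reduce every identity to one master tool, the discrete summation-by-parts formula for the forward difference, namely \eqref{IPP00}. First I would prove \eqref{IPP00} by Abel summation: writing $\int_{[0,1)} g(\partial_h^+ f) = \sum_{j=0}^N g_j(f_{j+1}-f_j)$, one splits the sum and reindexes $j\mapsto j-1$ in the term carrying $f_{j+1}$; the shift of the summation range is exactly what produces the telescoped boundary contributions $g_{N+1}f_{N+1}-g_0 f_0$ together with the sum $-\int_{(0,1]}(\partial_h^- g)f$. This is the discrete analogue of $\int g f' = -\int g' f + [gf]$, and all remaining formulas will follow from it (and its obvious $\partial_h^-$ and averaging counterparts) by rewriting the integrand algebraically with the pointwise rules of Lemma \ref{ID} and Lemma \ref{IPP} before summing.

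For \eqref{IPP000} I would use the factorisation $\partial_h = m_h^-\partial_h^+$ from \eqref{dhv}, so that $\int_{(0,1)} g\,\partial_h f = \int_{(0,1)} g\, m_h^-(\partial_h^+ f)$; a one-line reindexing for the averaging operator (the analogue of \eqref{IPP00} for $m_h^\pm$) turns $\int_{(0,1)} g\, m_h^-(\cdot)$ into $\int_{[0,1)}(m_h^+ g)(\cdot)$ up to the two half-weight endpoint terms $-\frac h2 g_0(\partial_h^+ f)_0 - \frac h2 g_{N+1}(\partial_h^- f)_{N+1}$. For \eqref{IPP1} I would write $\Delta_h = \partial_h^-\partial_h^+$ (from \eqref{Dhv}) and apply the $\partial_h^-$ version of \eqref{IPP00} once, moving $\partial_h^-$ onto $g$ to create $-\int_{[0,1)}(\partial_h^+ v)(\partial_h^+ g)$ and the boundary pair $-(\partial_h^+ v)_0 g_0 + (\partial_h^- v)_{N+1}g_{N+1}$; notably this step needs no boundary hypothesis on $v$, so the identity is available with the functions in any role.

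The three quadratic identities \eqref{IPP0}, \eqref{IPP1bis}, \eqref{IPP2} rest on the discrete chain rules obtained from \eqref{dhuv} with $u=v$, namely $(\partial_h^\pm v)(m_h^\pm v) = \frac12\partial_h^\pm(v^2)$, and their consequence $2v\partial_h v = \partial_h(v^2) - \frac{h^2}{2}\partial_h^-(|\partial_h^+ v|^2)$ (obtained by splitting $\partial_h=\frac12(\partial_h^++\partial_h^-)$ and using $m_h^\pm = I\pm\frac h2\partial_h^\pm$ from \eqref{mhv}). For \eqref{IPP1bis} I would apply \eqref{IPP1} with $g$ replaced by $gv$, expand $\partial_h^+(gv)$ by \eqref{dhuv}, and recognise $(\partial_h^+ v)(m_h^+ v)=\frac12\partial_h^+(v^2)$; a second use of \eqref{IPP1} (with the roles of $v$ and $g$ interchanged) converts $\int(\partial_h^+(v^2))(\partial_h^+ g)$ into $-\int v^2\Delta_h g$, while all boundary terms vanish because $v_0=v_{N+1}=0$. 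For \eqref{IPP0} I would substitute the anomaly identity for $2v\partial_h v$ and integrate by parts; here again $v_0=v_{N+1}=0$ kills the endpoint contributions, leaving exactly the $O(h^2)$ term $\frac{h^2}{2}\int_{[0,1)}|\partial_h^+ v|^2\partial_h^+ g$. For \eqref{IPP2} the cleanest route is the telescoping identity $(\Delta_h v)(\partial_h v) = \frac12\partial_h^-(|\partial_h^+ v|^2)$, which follows from $\Delta_h v = \partial_h^-(\partial_h^+ v)$, $\partial_h v = m_h^-(\partial_h^+ v)$ and the chain rule applied to $a:=\partial_h^+ v$; summation by parts for $\partial_h^-$ then yields $-\frac12\int_{[0,1)}|\partial_h^+ v|^2\partial_h^+ g$ together with the surviving endpoint terms $\frac12|(\partial_h^- v)_{N+1}|^2 g_{N+1} - \frac12|(\partial_h^+ v)_0|^2 g_0$.

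I expect the only real difficulty to be bookkeeping rather than analysis: one must track carefully which summation range ($\int_{(0,1)}$, $\int_{[0,1)}$ or $\int_{(0,1]}$) each reindexing produces, since the forward difference intrinsically shifts the index set and a wrongly placed endpoint is the main source of error. Conceptually, the key point — and the one that ultimately feeds the extra term in the Carleman estimate — is that the discrete chain rule is not exact: the centered difference of $v^2$ differs from $2v\partial_h v$ by the $O(h^2)$ quantity $\frac{h^2}{2}\partial_h^-(|\partial_h^+ v|^2)$, and it is precisely this discrepancy that generates the $\frac{h^2}{2}\int|\partial_h^+ v|^2\partial_h^+ g$ correction in \eqref{IPP0}.
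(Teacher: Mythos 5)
Your proposal is correct, and for \eqref{IPP00}, \eqref{IPP000}, \eqref{IPP1}, \eqref{IPP1bis} and \eqref{IPP2} it follows essentially the same route as the paper (Abel summation for the master formula, $\partial_h=m_h^-\partial_h^+$ for \eqref{IPP000}, the pointwise identity $(\Delta_h v)(\partial_h v)=\tfrac12\partial_h^-(|\partial_h^+v|^2)$ for \eqref{IPP2}, and the double use of \eqref{IPP1} together with $(\partial_h^+v)(m_h^+v)=\tfrac12\partial_h^+(v^2)$ for \eqref{IPP1bis}). The one genuine divergence is \eqref{IPP0}: the paper first converts $2\int_{(0,1)}gv\,\partial_h v$ into $2\int_{[0,1)}m_h^+(vg)\,\partial_h^+v$ via \eqref{IPP000} and then expands $m_h^+(vg)$ with \eqref{mhuv}, whereas you isolate the pointwise defect of the chain rule, $2v\partial_h v=\partial_h(v^2)-\tfrac{h^2}{2}\partial_h^-(|\partial_h^+v|^2)$, and sum each piece by parts; your variant is equally valid and has the merit of exhibiting explicitly where the $\tfrac{h^2}{2}\int_{[0,1)}|\partial_h^+v|^2\partial_h^+g$ correction comes from. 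One small inaccuracy in your description of that step: the endpoint contributions of the two summations by parts do not individually vanish under $v_0=v_{N+1}=0$; they equal $\tfrac12(g_{N+1}v_N^2-g_0v_1^2)$ and $\tfrac12(g_0v_1^2-g_{N+1}v_N^2)$ respectively (using $(\partial_h^+v)_0=v_1/h$ and $(\partial_h^-v)_{N+1}=-v_N/h$), and it is their mutual cancellation that closes the computation. This does not affect the validity of the argument, but it is exactly the kind of endpoint bookkeeping you rightly identified as the main source of error.
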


\begin{proof}
Let us begin with \eqref{IPP00}:
\begin{eqnarray*}
	\int_{[0,1)} g \partial_h^+ f 
	&= &
	h \sum_{j=0}^N g_j \left(\frac{f_{j+1}- f_j}{h}\right)
	= \sum_{j= 0}^N g_j f_{j+1} - \sum_{j=0}^N g_j f_j
	\\
	& = &
	 \sum_{j=1}^{N+1} g_{j-1} f_j -  \sum_{j=1}^{N+1} g_j f_j  + g_{N+1} f_{N+1} - g_0 f_0
	 \\
	 & = &
	 - h \sum_{j=1}^{N+1}\left( \frac{g_{j}- g_{j-1}}{h}\right) f_j  + g_{N+1} f_{N+1}- g_0 f_0.
\end{eqnarray*}
In order to prove \eqref{IPP000}, using \eqref{dhv}, we do as follows:
\begin{eqnarray*}
	\int_{(0,1)} g \partial_h f 
	& = &
	\frac{1}{2} \left(\int_{(0,1)} g \partial_h^- f + \int_{(0,1)} g \partial_h^+ f\right)
	\\
	& = &
	\frac{h}{2} \sum_{j=1}^N g_j (\partial_h^+ f)_{j-1} +	\frac{h}{2} \sum_{j=1}^N g_j (\partial_h^+ f)_{j}
	\\
	&= &
	\frac{h}{2} \sum_{j=0}^{N-1} g_{j+1} (\partial_h^+ f)_{j} +	\frac{h}{2} \sum_{j=1}^N g_j (\partial_h^+ f)_{j}  
	\\
	&=&
	\frac{h}{2} \sum_{j=0}^{N}(g_j+ g_{j+1}) (\partial_h^+ f)_{j} 
	- \frac{h}{2} g_0 (\partial_h^+ f)_0- \frac{h}{2} g_{N+1} (\partial_h^+ f)_N.
\end{eqnarray*}
To prove \eqref{IPP0}, using the fact that $v_0 =v_{N+1}=0$, and successively \eqref{IPP000}, \eqref{mhuv},  \eqref{dhuv} and \eqref{IPP00}, we obtain:
\begin{eqnarray*}
	2 \int_{(0,1)} g v\partial_h v
	& = &
	2  \int_{[0,1)} m_h^+ (v g) (\partial_h^+ v)
	\\
	& = &
	2 \int_{[0,1)}\left((m_h^+ v) (m_h^+ g) + \frac{h^2}{4}( \partial_h^+ v)  (\partial_h^+ g) \right)  (\partial_h^+ v) 
	\\
	& = & \int_{[0,1)} (m_h^+ g) \partial_h^+( |v|^2) + \frac{h^2}{2} \int_{[0,1)} ( \partial_h^+ v)^2  (\partial_h^+ g) 
	\\
	& = & - \int_{(0,1]} (\partial_h^- (m_h^+ g)) |v|^2+ \frac{h^2}{2} \int_{[0,1)} ( \partial_h^+ v)^2  (\partial_h^+ g) 
	\\
	& = & - \int_{(0,1)} (\partial_h g) |v|^2+ \frac{h^2}{2} \int_{[0,1)} ( \partial_h^+ v)^2  (\partial_h^+ g). 
\end{eqnarray*}
For \eqref{IPP1}, using \eqref{IPP00}, we write
\begin{eqnarray*}
	\int_{(0,1)} g (\Delta_h v )
	& =  &
	\int_{(0,1)} g \, \partial^-_h \partial_h^+ v  
	\\
	&=&  \int_{(0,1]} g \, \partial^-_h \partial_h^+ v -  g_{N+1} ((\partial_h^+ v)_{N+1} - (\partial_h^- v)_{N+1} )
	\\
	& = & 
	- \int_{[0,1)}(\partial^+_h v )~(\partial^+_h g)- (\partial_h^+ v)_0 g_0+  (\partial_h^- v)_{N+1} g_{N+1}.
\end{eqnarray*}
From \eqref{IPP1}, we prove \eqref{IPP1bis}, using $v_0 =v_{N+1}=0$ and Lemma \ref{IPP}:
\begin{eqnarray*}
	\int_{(0,1)}g v \Delta_h v
	& =  & - \int_{[0,1)} (\partial_h^+ v) (\partial_h^+ (gv)) 
	\\
	& =& -\int_{[0,1)} |\partial_h^+ v|^2 m_h^+ g - \int_{[0,1)} \partial_h^+ v \, m_h^+ v \, \partial_h^+ g
	\\
	& = &-\int_{[0,1)} |\partial_h^+ v|^2 m_h^+ g - \frac{1}{2}\int_{[0,1)} \partial_h^+ (|v|^2) \partial_h^+ g
	\\
	& = &-\int_{[0,1)} |\partial_h^+ v|^2 m_h^+ g + \frac{1}{2}\int_{(0,1)} |v|^2 \Delta_h g.
\end{eqnarray*}

Finally, in order to prove \eqref{IPP2}, we first remark that, using Lemma \ref{IPP}, 
$$
	(\Delta_h v)_j (\partial_h v)_j= (\partial_h^- (\partial_h^+ v))_j (m_h^- (\partial_h^+ v))_j = \frac{1}{2}( \partial_h^- \left(|\partial_h^+ v|^2 \right))_j 
$$
and therefore \eqref{IPP2} follows from \eqref{IPP00}:
\begin{eqnarray*}
	\lefteqn{ 
	\int_{(0,1)} g\Delta_hv \partial_h v  = \dfrac 12 	\int_{(0,1)} g\partial_h^- \left(|\partial_h^+ v|^2 \right)
	}\\
	& = &
	\dfrac 12 	\int_{(0,1]} g\partial_h^- \left(|\partial_h^+ v|^2 \right) - \frac{1}{2} g_{N+1} \left(|(\partial_h^+ v)_{N+1}|^2 - |(\partial_h^- v)_{N+1}|^2 \right) 
	\\
	& = & - \dfrac 12\int_{[0,1)} |\partial_h^+ v|^2 \partial_h^+g 
		 + \dfrac 12 \left|(\partial_h^- v)_{N+1} \right|^2 g_{N+1} - \dfrac 12\left|(\partial_h^+ v)_0 \right|^2 g_{0}.
\end{eqnarray*}
This concludes the proof of Lemma \ref{LemIPP2}.
\end{proof}

\subsection{Computation of the conjugate operator}\label{DEC3}

Set  $\rho = \exp(- s\varphi)$, $\varphi$ given by \eqref{varphi}, and 
\begin{equation}
	\label{PhDef}
	v(t,x) = \rho^{-1} (t,x)w(t,x) = e^{s\varphi(t,x)}w(t,x) \quad \hbox{ and } \quad  P_h v:= \frac{1}{\rho} \left( \partial_{tt} - \Delta_h \right) (\rho v).
\end{equation}

\begin{proposition}\label{B}
The conjugate operator $P_h$ can be expanded as follows:
\begin{equation}
	P_h v = \partial_{tt}v + 2\partial_t v \dfrac{\partial_t\rho}{ \rho}+  v \dfrac{\partial_{tt}\rho}{ \rho} 
		\label{ConjugateOperator}
	- \left(1 + \dfrac{h^2}2 \dfrac{\Delta_h \rho}{\rho} \right)\Delta_h v  -2 \partial_h v \dfrac{\partial_h\rho}{ \rho} - v \dfrac{\Delta_h\rho}{ \rho}.
\end{equation}
\end{proposition}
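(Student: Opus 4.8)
The plan is to split the operator along the decoupled structure $P_h v = \frac{1}{\rho}\partial_{tt}(\rho v) - \frac{1}{\rho}\Delta_h(\rho v)$ and treat the two pieces separately, since the time variable is continuous while only the space variable is discretized. For the time part there is nothing discrete: the classical Leibniz rule gives $\partial_{tt}(\rho v) = \rho\,\partial_{tt}v + 2\,\partial_t\rho\,\partial_t v + v\,\partial_{tt}\rho$, and dividing by $\rho$ produces exactly the first three terms $\partial_{tt}v + 2\partial_t v\,\tfrac{\partial_t\rho}{\rho} + v\,\tfrac{\partial_{tt}\rho}{\rho}$ of \eqref{ConjugateOperator}.

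The substantive computation is the space part $\frac{1}{\rho}\Delta_h(\rho v)$, which I would handle using the discrete Leibniz formula \eqref{Dhrv}:
$$
	\Delta_h(\rho v) = (\Delta_h\rho)(m_h v) + 2(\partial_h\rho)(\partial_h v) + (m_h\rho)(\Delta_h v).
$$
Dividing by $\rho$ yields $\frac{\Delta_h\rho}{\rho}(m_h v) + 2\frac{\partial_h\rho}{\rho}(\partial_h v) + \frac{m_h\rho}{\rho}(\Delta_h v)$. The middle term already matches $2\partial_h v\,\tfrac{\partial_h\rho}{\rho}$, so it only remains to reorganize the first and third terms.

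The key step, and the only place where any care is needed, is the appearance of the correction coefficient $1 + \tfrac{h^2}{2}\tfrac{\Delta_h\rho}{\rho}$ in front of $\Delta_h v$. This comes from the identity $m_h = I + \tfrac{h^2}{4}\Delta_h$ established in \eqref{mhv}, which I would use twice: once to write $m_h v = v + \tfrac{h^2}{4}\Delta_h v$ and once to write $m_h\rho = \rho + \tfrac{h^2}{4}\Delta_h\rho$. Substituting these, the first term splits as $\frac{\Delta_h\rho}{\rho}v + \tfrac{h^2}{4}\frac{\Delta_h\rho}{\rho}\Delta_h v$ and the third term as $\Delta_h v + \tfrac{h^2}{4}\frac{\Delta_h\rho}{\rho}\Delta_h v$. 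The bare $\frac{\Delta_h\rho}{\rho}v$ supplies the last term of \eqref{ConjugateOperator}, while the two $\tfrac{h^2}{4}$ contributions add to the $\Delta_h v$ coming from the third term, giving precisely $\bigl(1 + \tfrac{h^2}{2}\tfrac{\Delta_h\rho}{\rho}\bigr)\Delta_h v$.

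I do not expect any genuine obstacle here: the statement is an exact algebraic identity, and the only thing to get right is bookkeeping of the $h^2$-terms, whose combination produces the factor $h^2/2$ that distinguishes the discrete conjugate operator from its continuous analogue. Collecting the time and space contributions with the correct sign $P_h v = \frac{1}{\rho}\partial_{tt}(\rho v) - \frac{1}{\rho}\Delta_h(\rho v)$ then gives \eqref{ConjugateOperator} verbatim.
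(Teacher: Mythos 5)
Your proof is correct and follows essentially the same route as the paper: expand $\tfrac{1}{\rho}\Delta_h(\rho v)$ via the discrete Leibniz formula \eqref{Dhrv}, then apply $m_h = I + \tfrac{h^2}{4}\Delta_h$ from \eqref{mhv} to both $m_h v$ and $m_h\rho$ so that the two $\tfrac{h^2}{4}$ contributions combine into the factor $1 + \tfrac{h^2}{2}\tfrac{\Delta_h\rho}{\rho}$. The bookkeeping of the $h^2$-terms is exactly as in the paper, so nothing further is needed.
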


\begin{proof}
Identity \eqref{ConjugateOperator} can be deduced easily by explicit computations based on Lemma \ref{ID}. 
Indeed, 
$$
	P_h v 
	= 
	\dfrac 1\rho[\partial_{tt} - \Delta_h ](\rho v)
	=
	\partial_{tt} v + 2\partial_t v\dfrac {\partial_t\rho}\rho +v \dfrac {\partial_{tt}\rho}\rho - \dfrac {\Delta_h(\rho v)}\rho.
$$
But, using \eqref{Dhrv}, we get
$$
\dfrac {\Delta_h(\rho v)}\rho =  \Delta_h v \dfrac{m_h\rho}{\rho} +2 \partial_h v \dfrac{\partial_h\rho}{ \rho} + m_h v \dfrac{\Delta_h\rho}{ \rho}.
$$
Besides, from \eqref{mhv},
$$
	\left(\dfrac{m_h\rho}{\rho} \right)=  1 + \dfrac{h^2}4 \left(\dfrac{\Delta_h \rho}{\rho} \right)\hbox{  and } m_h v = v + \frac{h^2}{4} \Delta_h v,
$$
which immediately yield identity \eqref{ConjugateOperator}.
\end{proof}

One step of the usual way to prove a Carleman estimate is to split the operator $P_h$ into two operators $P_{h,1}$ and $P_{h,2}$ (detailed in Section \ref{CARL}), that, roughly speaking, corresponds to a decomposition into a self-adjoint part and a skew-adjoint one, and then to compute and estimate the scalar product 
$$
\int_{-T}^{T} \int_{(0,1)} P_{h,1}v\, P_{h,2}v\, dt.
$$
But we first need  to give a more precise expression of $P_h v$, using the following equalities:

\begin{proposition} \label{A}
The coefficients in the expression of $P_h$ can be expanded as follows:
\begin{eqnarray}
	\label{dtr}
	\dfrac{\partial_t\rho}{ \rho}  =  -s\la \varphi \partial_t \psi,
	&\quad &
	\dfrac{\partial_{tt}\rho}{ \rho}  =  s^2\la^2\varphi^2 \left( \partial_t \psi \right)^2  -s\la^2\varphi\left( \partial_t \psi \right)^2  -s\la\varphi \partial_{tt} \psi, 
	\\
	\dfrac{\partial_h\rho}{ \rho} =  -s\la A_1,
	\label{dhr}
	&\quad  & 
	\dfrac{\Delta_h \rho}{\rho}  =  s^2\la^2  A_2 - s\la^2 A_3 - s\la A_4,
\end{eqnarray}
where the coefficients $(A_1, A_2, A_3, A_4)$ are given by
\begin{align}
	A_{1} (t,x)
	&=
	  \dfrac 12\int_{-1}^1 \left[\varphi \partial_x\psi\right](t, x+\sig h) \dfrac{e^{-s\varphi(t, x+\sig h)}}{e^{-s\varphi(t,x)}}\,d\sig, 
	  \label{A1}\\
	A_{2}(t,x)
	&= \int_{-1}^1 (1-|\sig|) \left[ \varphi^2 (\partial_x\psi)^2\right](t,x+\sig h) \dfrac{e^{-s\varphi(t,x_j+\sig h)}}{e^{-s\varphi(t,x)}}\,d\sig,
	\label{A2}\\
	A_{3}(t,x) &= \int_{-1}^1 (1-|\sig|) \left[\varphi (\partial_x\psi)^2 \right](t,x+\sig h) \dfrac{e^{-s\varphi(t,x_j+\sig h)}}{e^{-s\varphi(t,x)}}\,d\sig,
	\label{A3}\\
	A_{4}(t,x) &= \int_{-1}^1 (1-|\sig|) \left[ \varphi\partial_{xx}\psi\right](t,x+\sig h) \dfrac{e^{-s\varphi(t,x_j+\sig h)}}{e^{-s\varphi(t,x)}}\,d\sig.\label{A4}
\end{align}
\end{proposition}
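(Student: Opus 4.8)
The plan is to verify the six identities \eqref{dtr}--\eqref{A4} by direct computation, separating the two purely time-derivative formulas from the two discrete space-derivative ones: the former are elementary chain-rule manipulations on the smooth weight, while the latter require an integral (Taylor-with-remainder) representation of the finite-difference operators.

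First I would record the elementary identities coming from $\rho=e^{-s\varphi}$ and $\varphi=e^{\la\psi}$ in \eqref{varphi}. Differentiating gives $\partial_t\varphi=\la\varphi\,\partial_t\psi$, hence $\partial_t\rho/\rho=-s\,\partial_t\varphi=-s\la\varphi\,\partial_t\psi$, which is the first formula in \eqref{dtr}. For the second, I would use $\partial_{tt}\rho/\rho=\partial_t(\partial_t\rho/\rho)+(\partial_t\rho/\rho)^2$; differentiating $-s\la\varphi\,\partial_t\psi$ once more and again invoking $\partial_t\varphi=\la\varphi\,\partial_t\psi$ produces exactly $s^2\la^2\varphi^2(\partial_t\psi)^2-s\la^2\varphi(\partial_t\psi)^2-s\la\varphi\,\partial_{tt}\psi$. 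These steps are routine and involve only the continuous variable.

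The space formulas \eqref{dhr} are where the discrete structure enters. The point is that the same chain rule yields $\partial_x\rho=-s\la\varphi(\partial_x\psi)\rho$ and
\[
\partial_{xx}\rho=\Big(s^2\la^2\varphi^2(\partial_x\psi)^2-s\la^2\varphi(\partial_x\psi)^2-s\la\varphi\,\partial_{xx}\psi\Big)\rho,
\]
so it only remains to express the finite differences of $\rho$ as integrals of $\partial_x\rho$ and $\partial_{xx}\rho$. For the first difference I would write, by the fundamental theorem of calculus and the substitution $y=x+\sig h$,
\[
\rho_{j+1}-\rho_{j-1}=\int_{x-h}^{x+h}\partial_x\rho(t,y)\,dy=h\int_{-1}^{1}\partial_x\rho(t,x+\sig h)\,d\sig .
\]
Dividing by $2h\rho_j=2h\,e^{-s\varphi(t,x)}$ and inserting $\partial_x\rho=-s\la\varphi(\partial_x\psi)e^{-s\varphi}$ gives $\partial_h\rho/\rho=-s\la A_1$ with $A_1$ as in \eqref{A1}.

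For the discrete Laplacian the most technical point is the analogous second-order representation
\[
\rho_{j+1}-2\rho_j+\rho_{j-1}=h^2\int_{-1}^{1}(1-|\sig|)\,\partial_{xx}\rho(t,x+\sig h)\,d\sig ,
\]
in which the triangular weight $(1-|\sig|)$ is precisely the kernel appearing in $A_2,A_3,A_4$. I would obtain it by writing the second difference as $\int_0^h[\partial_x\rho(t,x+\tau)-\partial_x\rho(t,x-\tau)]\,d\tau=\int_0^h\!\int_{-\tau}^{\tau}\partial_{xx}\rho(t,x+u)\,du\,d\tau$ and exchanging the order of integration: for fixed $u$ the variable $\tau$ runs over $(|u|,h)$, producing the factor $(h-|u|)$, and the substitution $u=\sig h$ turns this into $h^2(1-|\sig|)$. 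Dividing by $h^2\rho_j$ and substituting the formula above for $\partial_{xx}\rho$ splits the integral into exactly the three pieces $s^2\la^2A_2-s\la^2A_3-s\la A_4$, matching \eqref{A2}--\eqref{A4}. The only place demanding care is the bookkeeping in this exchange of integrals; every other step is a direct substitution, so I expect no real obstacle beyond that.
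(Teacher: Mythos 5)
Your proof is correct and follows essentially the same route as the paper: chain-rule identities for the time derivatives, the fundamental-theorem representation $(\partial_h\rho)_j=\tfrac12\int_{-1}^1\partial_x\rho(x_j+\sigma h)\,d\sigma$, and the triangular-kernel representation $(\Delta_h\rho)_j=\int_{-1}^1(1-|\sigma|)\partial_{xx}\rho(x_j+\sigma h)\,d\sigma$ combined with the explicit formulas for $\partial_x\rho$ and $\partial_{xx}\rho$. The only (immaterial) difference is that you obtain the triangular kernel by iterating the fundamental theorem and applying Fubini, whereas the paper invokes Taylor's formula with integral remainder at $x\pm h$ and sums.
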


\begin{proof}
Since $\rho = e^{-s\varphi} $ and  $\varphi = e^{\la \psi}$, identities \eqref{dtr} are straightforward.

Getting \eqref{dhr} is more technical. We write
$$
\left(\partial_h\rho \right)_j= \dfrac{\rho_{j+1} -\rho_{j-1}}{2h}
= \dfrac 1{2h}\int_{x_j-h}^{x_j+h} \partial_x \rho(x)\,dx = \dfrac 12 \int_{-1}^{1} \partial_x \rho(x_j+ \sig h)\,d \sig
$$
and since $\partial_x \rho = -s\la\rho\varphi\partial_x \psi$, we get \eqref{dhr}$_1$:
$$
\left(\dfrac{\partial_h\rho}{ \rho}\right)_j(t) = - \dfrac {s\la}2 \int_{-1}^{1} [\varphi\partial_x \psi](t,x_j+ \sig h) \dfrac {\rho(t,x_j+ \sig h)}{\rho(t,x_j)}\,d \sig = - s\la A_{1,j}(t).
$$

Similarly, the proof of \eqref{dhr}$_2$ relies on the usual Taylor formulas in integral form
$$f(x\pm h)= f(x)  \pm  hf'(x) + \int_{0}^{1} (1\mp \sigma) f''(x +  \sig h)\,d\sigma.$$
Indeed, applying this identity to $\rho$,
$$
	\rho_{j\pm1} = \rho_j \pm h\partial_x\rho(x_j) +h^2 \int_{0}^{1} (1 \mp \sig)\partial_{xx}\rho(x_j+\sig h)\,d\sig,
$$
and therefore, 
$$
(\Delta_h \rho)_j = \dfrac{\rho_{j+1} -  2 \rho_j+\rho_{j-1}}{h^2}  = \int_{-1}^{1} (1-|\sig|)\partial_{xx}\rho(x_j+\sig h)\,d\sig.
$$
Since $\partial_{xx}\rho = s^2\la^2 \rho \varphi^2 (\partial_x\psi)^2 
- s\la^2 \rho \varphi (\partial_x\psi)^2 - s\la \rho \varphi \partial_{xx}\psi$, we immediately deduce \eqref{dhr}$_2$.
\end{proof}

\begin{remark}
	The coefficients of $P_h$ are intrinsically defined on the grid $\{jh\}_{j \in \llbracket 1, N\rrbracket}$ and not for $x \in [0,1]$ as formulas \eqref{A1}--\eqref{A4} may imply. But it turns out that these formula induce a natural continuous extension of these coefficients that is easier to handle. We shall therefore identify these coefficients with their continuous extension given by \eqref{A1}--\eqref{A4} without confusion.
\end{remark}

\subsection{Preliminary estimates}
Before going into the proof of the Carleman estimate itself, done in Section \ref{CARL}, we give here several key approximations on the coefficients $A_j$ defined in \eqref{A1}--\eqref{A2}--\eqref{A3}--\eqref{A4} and their derivatives. 

To begin with, we shall introduce the Landau notation $\mathcal O_{\lambda} (\epsilon)$ to denote functions $f= f(t,x)$ that satisfy, for some constant $C$ independent of $\epsilon>0$ but that might depend on $\lambda$, $|f| \leq C \epsilon $.

We are then in position to state the following basic estimates:
\begin{lemma}
	\label{rhorho}
	For all $\la>0$, $s>0$ and $h>0$ with $sh \leq 1$, for all $\sig\in[-2,2]$ and $(t,x) \in [-T,T] \times [0,1]$,
	\begin{align}
		&\dfrac{\rho(t, x+\sig h)}{\rho(t,x)} = \dfrac{e^{-s\varphi(t,x+\sig h)}}{e^{-s\varphi(t, x)}} 
		= 1 + \mathcal O_\la(sh) ~ ;
		\label{rhorho1}
	\\
		& \partial_x\left( \dfrac{\rho(t,x+\sig h)}{\rho(t,x)} \right)
			=  \mathcal O_\la(sh) ~ ;
\qquad
		\partial_t\left( \dfrac{\rho(t,x+\sig h)}{\rho(t,x)} \right)
			=  \mathcal O_\la(sh) ~ ;
		\label{rhorho3}
	\\
		& \partial_{xx}\left( \dfrac{\rho(t,x+\sig h)}{\rho(t,x)} \right)
			=  \mathcal O_\la(sh) ~ ;
			\qquad
		 \partial_{tt}\left( \dfrac{\rho(t,x+\sig h)}{\rho(t,x)} \right)
			=  \mathcal O_\la(sh).
		\label{rhorho4}
	\end{align}
\end{lemma}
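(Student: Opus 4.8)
The plan is to write the ratio as a single exponential and reduce everything to controlling one smooth function together with its first and second derivatives. Set
\[
	E(t,x) = s\big(\varphi(t,x+\sig h) - \varphi(t,x)\big), \qquad \hbox{so that} \qquad \frac{\rho(t,x+\sig h)}{\rho(t,x)} = e^{-E(t,x)}.
\]
Since $\psi$ is a polynomial, $\varphi = e^{\la\psi}$ is $C^\infty$ on $\RR^2$, and on the fixed compact set $[-T,T]\times[-2,3]$ — which contains every point $(t,x+\sig h)$ with $(t,x)\in[-T,T]\times[0,1]$, $\sig\in[-2,2]$ and $h\leq 1/2$, hence all those arising here since $h = 1/(N+1)\leq 1/2$ — the function $\varphi$ and all its partial derivatives up to order two are bounded by a constant depending only on $\la$ (the remaining parameters $T,\beta,x^0,C_0$ being fixed).

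The first key observation I would establish is that $E$ and each of its first- and second-order $t$- and $x$-derivatives are $\mathcal O_\la(sh)$. Indeed, differentiating under the difference, for any partial derivative $\partial^\alpha$ of order at most two one has $\partial^\alpha E = s\big(\partial^\alpha\varphi(t,x+\sig h) - \partial^\alpha\varphi(t,x)\big)$, and since $\partial^\alpha\varphi$ is again $C^1$ with derivatives bounded in terms of $\la$ on the compact set above, the mean value inequality gives $|\partial^\alpha\varphi(t,x+\sig h) - \partial^\alpha\varphi(t,x)| \leq C_\la |\sig| h \leq 2 C_\la h$. Multiplying by $s$ yields $\partial^\alpha E = \mathcal O_\la(sh)$; the case $\alpha = 0$ gives $E = \mathcal O_\la(sh)$ as well.

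Next I would use the hypothesis $sh\leq 1$ to keep the exponential under control: it gives $|E| \leq C_\la sh \leq C_\la$, so $e^{-E}$ is bounded by $e^{C_\la}$, that is $e^{-E} = \mathcal O_\la(1)$. Estimate \eqref{rhorho1} then follows from $|e^{-E}-1| \leq |E|\, e^{|E|} \leq C_\la sh\, e^{C_\la} = \mathcal O_\la(sh)$. For the derivatives I would simply apply the chain rule and insert the bounds just obtained:
\[
	\partial_x\big(e^{-E}\big) = -(\partial_x E)\, e^{-E}, \qquad \partial_{xx}\big(e^{-E}\big) = \big(-\partial_{xx}E + (\partial_x E)^2\big)\, e^{-E},
\]
and likewise in $t$. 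Each first-order expression is a product $\mathcal O_\la(sh)\cdot\mathcal O_\la(1) = \mathcal O_\la(sh)$, giving \eqref{rhorho3}. Each second-order expression is $\big(\mathcal O_\la(sh) + \mathcal O_\la((sh)^2)\big)\,\mathcal O_\la(1)$; since $sh\leq 1$ forces $(sh)^2 \leq sh$, this is again $\mathcal O_\la(sh)$, which is \eqref{rhorho4}.

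There is no serious obstacle here; the computation is elementary once the ratio is written as $e^{-E}$. The only two points requiring a little care, and the places where the hypotheses genuinely enter, are: (i) restricting all evaluations to a fixed compact set so that the bounds on $\varphi$ and its derivatives are uniform — this is why $\sig$ is allowed to range only over the bounded interval $[-2,2]$ and $h$ is small; and (ii) using $sh\leq 1$ twice, once to keep $e^{-E}$ bounded, and once to absorb the quadratic terms $(sh)^2$ produced by the chain rule back into $\mathcal O_\la(sh)$.
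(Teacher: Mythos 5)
Your proposal is correct and follows essentially the same route as the paper: both write the ratio as the exponential of $s\big(\varphi(t,x+\sig h)-\varphi(t,x)\big)$, bound this difference and its derivatives by $\mathcal O_\la(sh)$ via the smoothness of $\varphi$ on a compact set, and then apply the chain rule (the paper carries out the $\partial_x$ case explicitly and leaves the rest to the reader, exactly as you do more systematically). Your explicit use of $sh\leq 1$ to absorb the $(sh)^2$ terms and to keep $e^{-E}$ bounded is a slightly more careful account of what the paper does implicitly.
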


\begin{proof}
Since the function $\psi$ is smooth and bounded on $(-T,T) \times (0,1)$, we have $\psi(t,x_j+\sig h) = \psi(t,x_j) + \mathcal O(h)$ and therefore
$$
	\varphi(t,x + \sigma h) = e^{\la\psi(t,x_j) +\la\mathcal O(h)} = e^{\la\psi(t,x_j)} (1+\mathcal O_\la(h)) = \varphi(t,x) + \mathcal O_{\lambda} (h).
$$
Therefore, we easily get \eqref{rhorho1} since
$$
	\rho(t,x+\sig h) = e^{ - s\varphi(t,x) + \mathcal O_\la(sh)} = \rho(t,x)(1+\mathcal O_\la(sh)).
$$
Similarly,
\begin{align*}
	\partial_x\left( \dfrac{\rho(t,x+\sig h)}{\rho(t,x)} \right)
	& =\partial_x\left( e^{-s\varphi(t,x+\sig h)}e^{s\varphi(t,x)} \right)
		\\
	& =s \lambda \left[ -\varphi(t,x+\sig h)\partial_x \psi(t,x+\sig h) +\varphi(t,x)\partial_x \psi(t,x)\right] \dfrac{e^{-s\varphi(t,x+\sig h)}}{e^{-s\varphi(t,x)}}
\end{align*}
so that 
$$
		\partial_x\left( \dfrac{\rho(t,x+\sig h)}{\rho(t,x)} \right)= s\la \mathcal{O}_\lambda (h) (1+\mathcal O_\la(sh)) 
		 =\mathcal{O}_\la(sh),
$$
which concludes the proof of \eqref{rhorho3}, left.

Of course, other estimates in \eqref{rhorho3}--\eqref{rhorho4} can be proved following the same ideas. Details are left to the reader.
\end{proof}

We can now give good approximations of the coefficients $A_j$:
\begin{lemma} \label{AA}
Set
$$
	f_1 =  \varphi \partial_x\psi, \quad  f_{2}=\varphi^2 (\partial_x\psi)^2,\quad f_{3} = \varphi (\partial_x\psi)^2, \quad f_{4} = \varphi\partial_{xx}\psi.
$$
Using the notations $A_j$ defined in Proposition~\ref{A}, for $(t,x) \in [-T,T] \times [0,1]$
and $j \in \{1, 2, 3, 4\}$, we have:

$\bullet$ On the $0$th order derivation operators:
\begin{equation}
	A_j  =  f_j + \mathcal{O}_{\lambda}(sh) 
	= m_h^+(A_j) + \mathcal{O}_{\lambda}(sh) = m_h^- (A_j) + \mathcal{O}_{\lambda}(sh)  =  m_h (A_j) + \mathcal{O}_{\lambda}(sh)~;
	\label{A0-order}
\end{equation}

$\bullet$ On the $1$st order derivation operators:
\begin{align}
	\label{A1-order-x}
	\partial_h A_j
	& =   \partial_x f_j + \mathcal{O}_\lambda(sh)= \partial_h^+ A_j  + \mathcal{O}_{\lambda}(sh) =  \partial_h^- A_j  + \mathcal{O}_{\lambda}(sh),
	\\
	\label{A1-order-t}
	\partial_t A_j
	 & = \partial_t f_j + \mathcal{O}_{\lambda}(sh)~;
\end{align}

$\bullet$ On the $2$nd order derivation operators:
\begin{equation}
	\label{A2-order}
	\Delta_h A_j =  \partial_{xx} f_j + \mathcal{O}_{\lambda}(sh), \quad  \partial_{tt} A_j = \partial_{tt} f_j + \mathcal{O}_{\lambda}(sh).
\end{equation}
\end{lemma}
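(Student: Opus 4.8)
The plan is to reduce everything to the common integral form shared by \eqref{A1}--\eqref{A4}, namely
\begin{equation*}
	A_j(t,x) = \int_{-1}^1 w(\sigma)\, f_j(t,x+\sigma h)\, \frac{\rho(t,x+\sigma h)}{\rho(t,x)}\, d\sigma,
\end{equation*}
where $w(\sigma) = 1/2$ when $j=1$ and $w(\sigma) = 1-|\sigma|$ when $j \in \{2,3,4\}$. In all four cases $w$ is even and normalised, $\int_{-1}^1 w(\sigma)\,d\sigma = 1$. Since $\psi$ is smooth and bounded on $[-T,T]\times[0,1]$, each $f_j$ together with all its $t$- and $x$-derivatives is bounded by a constant depending only on $\lambda$, so that $f_j(t,x+\sigma h) = f_j(t,x) + \mathcal{O}_\lambda(h)$, and similarly for its derivatives, uniformly in $\sigma \in [-1,1]$. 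Throughout I shall express every error in terms of the natural small parameter $sh \le 1$, absorbing the cruder $\mathcal{O}_\lambda(h)$ and $\mathcal{O}_\lambda(h^2)$ into $\mathcal{O}_\lambda(sh)$ by means of $1 \le s$ (valid since $s \geq s_0$).

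For the zeroth-order estimate \eqref{A0-order} I would insert into the integral the expansion $\rho(t,x+\sigma h)/\rho(t,x) = 1 + \mathcal{O}_\lambda(sh)$ from Lemma~\ref{rhorho} together with $f_j(t,x+\sigma h) = f_j(t,x) + \mathcal{O}_\lambda(h)$; since $w$ integrates to $1$ this gives $A_j = f_j + \mathcal{O}_\lambda(sh)$ at once. The averaged quantities $m_h^\pm A_j$ and $m_h A_j$ are convex combinations of values of $A_j$ at neighbouring grid points, each equal to $f_j + \mathcal{O}_\lambda(sh)$, and they therefore inherit the same approximation. The $t$-derivative estimates \eqref{A1-order-t} and the second half of \eqref{A2-order} follow by differentiating under the integral sign: applying $\partial_t$ or $\partial_{tt}$ and Leibniz' rule produces terms containing $\partial_t(\rho(t,x+\sigma h)/\rho(t,x))$ and $\partial_{tt}(\rho(t,x+\sigma h)/\rho(t,x))$, all of which are $\mathcal{O}_\lambda(sh)$ by \eqref{rhorho3}--\eqref{rhorho4}, while the remaining factors reconstruct $\partial_t f_j$, respectively $\partial_{tt} f_j$, up to $\mathcal{O}_\lambda(sh)$.

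The $x$-direction is the only delicate point, because the operators in \eqref{A1-order-x} and the first half of \eqref{A2-order} are the discrete $\partial_h^\pm$, $\partial_h$ and $\Delta_h$, whose consistency error must be controlled uniformly in $(s,h)$. First I would establish the continuous statements $\partial_x A_j = \partial_x f_j + \mathcal{O}_\lambda(sh)$ and $\partial_{xx} A_j = \partial_{xx} f_j + \mathcal{O}_\lambda(sh)$ exactly as for the time derivatives, differentiating under the integral and invoking \eqref{rhorho3}--\eqref{rhorho4}; in particular $\partial_{xx}A_j$ is bounded by a constant depending only on $\lambda$. For $\partial_h^\pm A_j$ and $\partial_h A_j$ I would then use a first-order Taylor expansion with integral remainder: since $\partial_{xx}A_j$ is bounded uniformly in $(s,h)$, each of $\partial_h^+ A_j$, $\partial_h^- A_j$ and $\partial_h A_j = \tfrac12(\partial_h^+ A_j + \partial_h^- A_j)$ equals $\partial_x A_j + \mathcal{O}_\lambda(h) = \partial_x f_j + \mathcal{O}_\lambda(sh)$, which is \eqref{A1-order-x}. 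For $\Delta_h A_j$ I would avoid fourth derivatives altogether by reusing the exact identity $\Delta_h g(x) = \int_{-1}^1 (1-|\sigma|)\,\partial_{xx} g(x+\sigma h)\,d\sigma$ already derived in the proof of Proposition~\ref{A}; applied to $g = A_j$ and combined with $\partial_{xx}A_j = \partial_{xx}f_j + \mathcal{O}_\lambda(sh)$ and $\partial_{xx}f_j(x+\sigma h) = \partial_{xx}f_j(x) + \mathcal{O}_\lambda(h)$, it yields $\Delta_h A_j = \partial_{xx}f_j + \mathcal{O}_\lambda(sh)$.

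The main obstacle is precisely this transfer from continuous to discrete $x$-derivatives: one must be sure that the higher $x$-derivatives of the $s$- and $h$-dependent coefficients $A_j$ do not blow up as $s$ grows, so that the finite-difference consistency errors stay genuinely of size $\mathcal{O}_\lambda(sh)$. This is exactly what Lemma~\ref{rhorho} provides up to second order, every derivative of the ratio $\rho(t,x+\sigma h)/\rho(t,x)$ being $\mathcal{O}_\lambda(sh)$ and hence bounded in the regime $sh \le 1$; it is also the reason why that lemma is stated up to second derivatives and no further.
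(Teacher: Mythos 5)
Your proposal is correct and follows essentially the same route as the paper: both rest on the common integral representation of the $A_j$ with a normalised even weight, the expansions $f_j(t,x+\sigma h)=f_j(t,x)+\mathcal O_\lambda(h)$, and the bounds of Lemma~\ref{rhorho} on the ratio $\rho(t,x+\sigma h)/\rho(t,x)$ and its derivatives. The only (harmless) organisational difference is in the $x$-direction: the paper uses the exact identities $\partial_h A_j=\tfrac12\int_{-1}^1\partial_x A_j(\cdot+\alpha h)\,d\alpha$ and its analogues and then differentiates under the integral sign, whereas you first establish the continuous estimates $\partial_x A_j=\partial_x f_j+\mathcal O_\lambda(sh)$, $\partial_{xx}A_j=\partial_{xx}f_j+\mathcal O_\lambda(sh)$ and then pass to $\partial_h^\pm$, $\partial_h$, $\Delta_h$ via Taylor remainders controlled by the uniformly bounded second derivative.
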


\begin{proof}
Let us first notice that that all the coefficients $A_j$ can be written as
$$
	A_j (t,x)= \int_{-1}^1 \mu_j(\sigma) f_j(t,x+\sigma h)  \dfrac{e^{-s\varphi(t,x+\sig h)}}{e^{-s\varphi(t,x)}}\,d\sig, 
		\quad \mu_j (\sigma) = \left\{ 
		\begin{array}{ll} 
			1/2 &\hbox{ if } j = 1,\\ 
			(1- |\sigma|) &\hbox{ otherwise.} 
		\end{array}\right. 
$$
Using Lemma~\ref{rhorho} and the regularity of $\psi$ and $\varphi$, one can write
\begin{eqnarray*}
		A_{j}(t,x) & = &  \int_{-1}^1\mu_j(\sigma) f_j(t,x+\sigma h) \dfrac{e^{-s\varphi(t,x+\sig h)}}{e^{-s\varphi(t,x)}}\,d\sig\\
			&=& \int_{-1}^1 \mu_j(\sigma) (f_j(t,x) + \mathcal O_\lambda (h)) ( 1 + \mathcal O_\la(sh))\,d\sig
			= f_j(t,x)  + \mathcal O_\la(sh).
\end{eqnarray*}
Let us remark that it also yields the same expansion for $A_j(t,x+h)$ up to an error term of order $\mathcal{O}_\lambda(h)$, from which one easily concludes \eqref{A0-order}. 

For the first-order derivatives \eqref{A1-order-x}, we can write
\begin{eqnarray*}
	\partial_h A_j (t,x) 
	& = & \frac{1}{2} \int_{-1}^1 \partial_x A_j(t,x+ \alpha h) \, d\alpha
	\\
	& = & \frac{1}{2} \int_{-1}^1 \int_{-1}^1 \mu_j(\sigma) \partial_x f_j(t,x +(\alpha+\sigma)h) 	 \dfrac{e^{-s\varphi(t,x+(\alpha+\sig) h)}}{e^{-s\varphi(t,x+\alpha h)}} \, d\alpha d\sigma
	\\
	& &  + \frac{1}{2}  \int_{-1}^1 \int_{-1}^1 \mu_j(\sigma) f_j(t,x +(\alpha+\sigma)h) 	\partial_x\left( \dfrac{e^{-s\varphi(t,x+(\alpha+\sig) h)}}{e^{-s\varphi(t,x+\alpha h)}}\right) \, d\alpha d\sigma.
\end{eqnarray*}
But using \eqref{rhorho3}, 
$$
	\frac{1}{2}  \int_{-1}^1 \int_{-1}^1 \mu_j(\sigma) f_j(t,x +(\alpha+\sigma)h) 	\partial_x\left( \dfrac{e^{-s\varphi(t,x+(\alpha+\sig) h)}}{e^{-s\varphi(t,x+\alpha h)}}\right) \, d\alpha d\sigma
= \mathcal{O}_\lambda(sh).
$$
Therefore, we only have to estimate
$$
 	\frac{1}{2} \int_{-1}^1 \int_{-1}^1 \mu_j(\sigma) \partial_x f_j(t,x +(\alpha+\sigma)h) 	 \dfrac{e^{-s\varphi(t,x+(\alpha+\sig) h)}}{e^{-s\varphi(t,x+\alpha h)}} \, d\alpha d\sigma, 
 $$
 which can be done by using $\partial_x f_j(t,x+( \alpha+\sigma)h) = \partial_x f_j(t,x) + \mathcal{O}_\lambda(h)$, \eqref{rhorho1} and the fact that $\int_{-1}^1 \mu_j(\sigma)\, d\sigma = 1$. This yields
 $$
 	\partial_h A_j (t,x) = \partial_x f_j(t,x) + \mathcal{O}_\lambda(sh).
 $$
Of course, similar computations can be done for $\partial_h^+ A_j, \, \partial_h^- A_j$.

For \eqref{A1-order-t}, the idea is the same: we use the integral expression of the coefficients, check that the derivatives from the ratio of exponentials are of order $\mathcal{O}_\lambda(sh)$ and can therefore be neglected due to \eqref{rhorho3}, and then proceed as above. Details are left to the reader. 

For the estimates on the second order derivatives \eqref{A2-order}, this proof applies again and is therefore omitted, using this time the second order estimates \eqref{rhorho4}.
\end{proof}

To summarize the results detailed in Lemma \ref{AA}, we have proved that
$$
	A_{1} \simeq \varphi \partial_x\psi , \quad A_{2} \simeq \varphi^2 (\partial_x\psi)^2, 
	\quad A_{3} \simeq \varphi (\partial_x\psi)^2 , \quad A_{4} \simeq\varphi\partial_{xx}\psi,
$$
up to error terms in $\mathcal{O}_\lambda(sh)$, and these expressions can be differentiate twice, still with an error term of the order of $\mathcal{O}_\lambda(sh)$.\\

A more precise expression of $P_hv$ can now be deduced from Propositions \ref{B} and \ref{A}:
\begin{eqnarray*}
P_hv  &= & \partial_{tt}v - 2s\la \varphi \partial_t \psi\partial_t v+  s^2\la^2\varphi^2 \left( \partial_t \psi \right)^2 v -s\la^2\varphi\left( \partial_t \psi \right)^2v  
-~s\la\varphi (\partial_{tt} \psi) v\\
& &  -\left(1+ \dfrac {h^2}2 (s^2\la^2  A_2 - s\la^2 A_3- s\la A_4)\right)\Delta_h v  +~2 s\la A_1\partial_h v
-   (s^2\la^2  A_2 - s\la^2 A_3 - s\la A_4) v.
\end{eqnarray*}
In order to simplify the notations, we set 
$$
	A_0 = \dfrac {h^2}2 (s^2\la^2  A_2 - s\la^2 A_3- s\la A_4), 
$$
so that $P_h$ can be rewritten as
\begin{eqnarray*}
P_h v 
&= 
	&  \partial_{tt}v - 2s\la \varphi \partial_t \psi\partial_t v+  s^2\la^2\varphi^2 \left( \partial_t \psi \right)^2 v -s\la^2\varphi\left( \partial_t \psi \right)^2v  
	-s\la\varphi( \partial_{tt} \psi) v
	\\
	& &
	 -\left(1+A_0 \right)\Delta_h v  +~2 s\la A_1\partial_h v
-   (s^2\la^2  A_2 - s\la^2 A_3 - s\la A_4) v.
\end{eqnarray*}

Note that $A_0$ is expected to be small. Indeed, using Lemma~\ref{AA}, one easily gets that $A_0$ is in $\mathcal{O}_{\lambda}(sh)$ and that the same holds true for the following expressions: 
\begin{equation}\label{A0}
	A_0,m_h A_0,\, m_h^\pm A_0, \, \partial_h A_0,\, \partial_h^\pm A_0,\ \partial_t A_0,\, \Delta_h A_0, \partial_{tt} A_0 
	\quad \tn{all are } \mathcal{O}_{\lambda}(sh).
\end{equation}
We emphasize that this term $A_0$ is a purely numerical artifact which does not have any continuous counterpart. 
\subsection{Proof of the Carleman estimate}\label{CARL}

In this section, we focus on the proof of the discrete Carleman estimate \eqref{CarlemD} given in Theorem~\ref{CarlemanD}.

We first set
\begin{eqnarray}
	P_{h,1}v
		&=&
	\partial_{tt}v - \Delta_h v ( 1+ A_0) +  s^2\la^2 \left[\varphi^2 \left( \partial_t \psi \right)^2 - A_2\right]v\,,
	\label{P1}\\
	P_{h,2}v
		&=&
	- s\la^2 \left[ \varphi |\partial_t \psi|^2 - A_3\right]v 	- 2 s\la  \left[ \varphi \partial_t \psi  \partial_t v - A_1 \partial_h v\right]\, ,
	\label{P2}\\
R_hv&=&  s \la\left[\varphi \partial_{tt} \psi -A_4\right]v\, , \label{Rh}
\end{eqnarray}
so that we have
$
	P_{h,1}v+P_{h,2}v=P_h v+R_h v,
$
and in particular,
\begin{multline}\label{double}
	\int_{-T}^{T} \int_{(0,1)} | P_hv+R_hv|^2  \, dt
	=
	\int_{-T}^{T} \int_{(0,1)}| P_{h,1}v| ^2\, dt
	+
	\int_{-T}^{T} \int_{(0,1)}| P_{h,2}v| ^2 \, dt
	\\
	+
	 2\int_{-T}^{T} \int_{(0,1)}P_{h,1}vP_{h,2}v\, dt.
\end{multline}
We will then prove the following:

\begin{lemma}\label{LemDecompo}
There exist $\lambda>0$
, $s_0>0$, $\varepsilon_0 >0$ and a constant $M_0>0$ such that for all 
$s \in (s_0, \varepsilon_0/h)$, for all $v$ satisfying $v_0 = v_{N+1} = 0$ and $v(\pm T) = \partial_t v (\pm T) = 0$, 
\begin{gather}
	 s \int_{-T}^{T} \int_{(0,1)}| \partial_t v|^2 \, dt
	+  s \int_{-T}^{T} \int_{[0,1)}\ |\partial_h^+ v|^2\, dt
	+	s^3 \int_{-T}^{T} \int_{(0,1)}|v|^2\, dt
	\nonumber\\
	+ \int_{-T}^{T} \int_{(0,1)}| P_{h,1}v| ^2\, dt
	+
	\int_{-T}^{T} \int_{(0,1)}| P_{h,2}v| ^2 \, dt
	\leq 
	M_0 \int_{-T}^{T} \int_{(0,1)} |P_h v|^2 \, dt
	\label{decompo}  \\
	+
	M_0 s \int_{-T}^{T}  \left| (\partial_h^- v)_{N+1} \right|^2 \, dt
	+ 
	M_0 s \int_{-T}^{T} \int_{[0,1)} |h \partial_h^+\partial_t v|^2\, dt.
\nonumber
\end{gather}
\end{lemma}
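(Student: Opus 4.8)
The plan is to run the classical Carleman machinery in the discrete setting, exploiting the splitting $P_{h,1}v+P_{h,2}v = P_hv + R_hv$ recorded just before the statement. Squaring and integrating yields \eqref{double}, so the whole estimate reduces to bounding the cross term $2\int_{-T}^{T}\int_{(0,1)} P_{h,1}v\,P_{h,2}v\,dt$ from below by the three energy terms on the left-hand side, up to the boundary observation term and the one extra high-frequency term allowed on the right. First I would expand $P_{h,1}v$ into its three summands ($\partial_{tt}v$, $-\Delta_h v(1+A_0)$, and the zeroth-order term $s^2\lambda^2[\varphi^2(\partial_t\psi)^2-A_2]v$) and $P_{h,2}v$ into its two summands, and compute all six resulting products.

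Each product is handled by the discrete integration-by-parts identities of Lemma~\ref{LemIPP2}, integrating by parts in $t$ over $(-T,T)$ — where the conditions $v(\pm T)=\partial_t v(\pm T)=0$ kill all temporal boundary contributions — and in the discrete variable via \eqref{IPP0}--\eqref{IPP2}, where $v_0=v_{N+1}=0$ annihilates most spatial boundary terms. Throughout, I would replace each coefficient $A_j$ and its discrete derivatives by its continuous counterpart $f_j$ using Lemma~\ref{AA}, so that the leading part reproduces exactly the continuous Carleman identity while every discrepancy is an $\mathcal{O}_\lambda(sh)$ error (and $A_0$ is entirely such an artifact by \eqref{A0}). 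After collecting terms, the dominant contributions are the pseudoconvex quadratic forms $s\lambda^2\int\varphi(\cdots)|\partial_t v|^2$, $s\lambda^2\int\varphi(\cdots)|\partial_h^+ v|^2$ and $s^3\lambda^4\int\varphi^3(\cdots)|v|^2$; since $\varphi$ is bounded below by a positive constant on the compact domain and $\lambda$ is fixed, choosing $\lambda$ large enough to make the bracketed coefficients positive (this is where $\beta\in(0,1)$ and the convexity of $\psi$ enter) yields the required $s\int|\partial_t v|^2$, $s\int|\partial_h^+ v|^2$ and $s^3\int|v|^2$ with the correct powers of $s$.

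The decisive discrete phenomenon appears in the product $\partial_{tt}v\cdot(2s\lambda A_1\partial_h v)$ coming from $P_{h,1}$ against the $\partial_h v$-part of $P_{h,2}$: after one integration by parts in time and an application of \eqref{IPP0} to $\partial_t v$ (which is licit since $(\partial_t v)_0=(\partial_t v)_{N+1}=0$), one produces the artifact $\tfrac{h^2}{2}\int_{[0,1)}|\partial_h^+\partial_t v|^2\,\partial_h^+ A_1$, which is exactly of the size of the term $M_0 s\int|h\partial_h^+\partial_t v|^2$ on the right-hand side of \eqref{decompo}. I would argue that this contribution, being of order one at frequencies $\sim 1/h$, cannot be absorbed by the left-hand side and must therefore be transferred to the right, in agreement with the known failure of uniform observability. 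Symmetrically, the product $-\Delta_h v(1+A_0)\cdot(2s\lambda A_1\partial_h v)$, treated by \eqref{IPP2}, generates the surviving spatial boundary term at $j=N+1$ with the sign of $g_{N+1}\approx[\varphi\partial_x\psi](t,1)>0$; moved to the right this becomes the observation term $M_0 s\int|(\partial_h^- v)_{N+1}|^2\,dt$. Finally I would close the estimate by fixing $\lambda$, then taking $s_0$ large and $\varepsilon_0$ small so that the constraint $sh\leq\varepsilon_0$ makes every $\mathcal{O}_\lambda(sh)$ error (including all contributions of $A_0$) absorbable by the positive energy terms, and so that the remainder $\int|R_hv|^2\lesssim s^2\int|v|^2$ is dominated by $s^3\int|v|^2$; combining with \eqref{double} and $\int|P_hv+R_hv|^2\leq 2\int|P_hv|^2+2\int|R_hv|^2$ gives \eqref{decompo}.

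The main obstacle I anticipate is the bookkeeping of the cross term: one must compute the six products exactly, track precisely which integration-by-parts formula generates which boundary and artifact contributions, and verify that after replacing the $A_j$ by the $f_j$ the leading quadratic form is genuinely positive definite for large $\lambda$, while all truly discrete corrections are either $\mathcal{O}_\lambda(sh)$ (hence absorbable under $sh\leq\varepsilon_0$) or collected into the single admissible term $s\int|h\partial_h^+\partial_t v|^2$. Cleanly separating the harmless artifacts from the one that must be retained is the crux of the argument.
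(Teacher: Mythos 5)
Your proposal follows essentially the same route as the paper's proof: expanding the cross term $\int P_{h,1}v\,P_{h,2}v$ into the six products, using the discrete integration-by-parts identities of Lemma~\ref{LemIPP2} together with the $\mathcal{O}_\lambda(sh)$ approximations of Lemma~\ref{AA}, and correctly locating both the Tychonoff artifact $\tfrac{h^2}{2}\int|\partial_h^+\partial_t v|^2\,\partial_h^+A_1$ in the product of $\partial_{tt}v$ with the $A_1\partial_h v$ term (via \eqref{IPP0}) and the boundary observation term in the product of $\Delta_h v(1+A_0)$ with $A_1\partial_h v$ (via \eqref{IPP2}), before fixing $\lambda$ large, taking $sh$ small, and absorbing $R_hv$. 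The only slight imprecision is that in the paper the $s\lambda^2$ first-order contributions merely cancel up to $\mathcal{O}_\lambda(sh)$ errors by Cauchy--Schwarz, and the strictly positive first-order contribution actually comes from the $s\lambda\varphi(\partial_{tt}\psi+\partial_{xx}\psi)=2s\lambda\varphi(1-\beta)$ terms (only the zeroth-order positivity needs $\lambda$ large); this is a detail that your announced bookkeeping would resolve.
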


\begin{proof} We will begin with calculating and bounding from below the product
$$
	\int_{-T}^{T} \int_{(0,1)}P_{h,1}v\, P_{h,2} v\, dt.
$$

\noindent {\bf Step 1. Explicit computations of the cross product.}\\
The proof of estimate \eqref{decompo} relies first of all on the computation of the multiplication of each term of $P_{h,1}v$ by each term of $P_{h,2}v$. We write 
$$
	\ds\int_{-T}^{T} \int_{(0,1)} P_{h,1}v \, P_{h,2}v\, dt  = \sum_{i=1}^3\sum_{j=1}^2 I_{ij}
$$ 
where $I_{ij}$ denotes the product between  the $i$-th term of $P_{h,1}$ in \eqref{P1} and the $j$-th term of $P_{h,2}$ in \eqref{P2}.
We now perform the computation of each $I_{ij}$ term. 

Of course, we shall strongly use below the properties of $v$ on the boundary and in particular that   $v(\pm T) = \partial_t v(\pm T)= 0$, 
$v_0(t)=v_{N+1}(t)= 0$ and also the fact that $\partial_t v_0(t) = \partial_t v_{N+1}(t)=0$ for all $t \in (-T,T)$. 

We shall also use the results of Lemma~\ref{AA}, which will allow us to simplify most of the expression in which the coefficients $A_j$ appear.
We recall to the reader that we will use the notation \eqref{intf} for the discrete integrals in space. In order to simplify notations, we will also omit ``$dt$" in the integrals in time (that are continuous ones). Therefore: 
$$
	\int_{-T}^{T}\int_{(0,1)} f = h \ds\sum_{j=1}^{N} \int_{-T}^{T} f_{j}(t)dt 
	\quad \tn{and } \quad
	\int_{-T}^{T}\int_{[0,1)} f = h \ds\sum_{j=0}^{N}\int_{-T}^{T} f_{j}(t)dt .
$$
%

\noindent{\it Computation of $I_{11}$.} Integrating by parts in time, 
\begin{align*}
	I_{11}
	=~&
	-s\la^2\int_{-T}^{T} \int_{(0,1)}\partial_{tt}v (\varphi |\partial_t \psi|^2-A_3)v 
	\\
	=~&
	s \la ^2 \int_{-T}^{T} \int_{(0,1)}|\partial_t v|^2(\varphi|\partial_t \psi|^2-A_3)
	-\dfrac{s\la^2}2 \int_{-T}^{T} \int_{(0,1)}|v|^2\partial_{tt}(\varphi |\partial_t \psi|^2-A_3) 
	\\
	=~&
	s \la ^2 \int_{-T}^{T} \int_{(0,1)}|\partial_t v|^2\varphi(|\partial_t \psi|^2-|\partial_x\psi|^2)
	\\
	& 
	-\dfrac{s\la^2}2 \int_{-T}^{T} \int_{(0,1)}|v|^2\partial_{tt}(\varphi |\partial_t \psi|^2-\varphi |\partial_x \psi|^2) 
	\\
	& 
	+ s \int_{-T}^{T} \int_{(0,1)}\mathcal O_\la(sh) |\partial_t v|^2 + s \int_{-T}^{T} \int_{(0,1)}  \mathcal O_\la(sh) |v|^2,
\end{align*}
using $A_3 = \varphi |\partial_x \psi|^2+  \mathcal O_{\la}(sh)$ and 
$
	\partial_{tt} A_3 =\partial_{tt}\left(\varphi |\partial_x \psi|^2\right) +  \mathcal O_{\la}(sh).
$\\

\noindent{\it Computation of $I_{12}$.} 
\begin{align*}
	I_{12}
		=~&-2s\la \int_{-T}^{T} \int_{(0,1)}\partial_{tt}v (\varphi \partial_t \psi \partial_t v -A_1 \partial_h v) 
	\\
		=~& s\la \int_{-T}^{T} \int_{(0,1)} | \partial_t v|^2\varphi \partial_{tt} \psi 
		+ s\la^2 \int_{-T}^{T} \int_{(0,1)} | \partial_t v|^2\varphi |\partial_t \psi|^2 
	\\
		&-2 s\la \int_{-T}^{T} \int_{(0,1)}  \partial_t A_1 \partial_t v \partial_h  v  -2 s\la \int_{-T}^{T} \int_{(0,1)}  A_1 \partial_t v \partial_h \partial_t v .
\end{align*}
But, by \eqref{IPP0}, 
\begin{multline*}
	-2 s\la \int_{-T}^{T} \int_{(0,1)}  A_1 \partial_t v \partial_h \partial_t v  =  s \lambda \int_{-T}^T \int_{(0,1)} |\partial_t v|^2 \partial_h A_1
		- \frac{h^2}{2} s\lambda  \int_{-T}^T \int_{(0,1)}  |\partial_h^+ \partial_t v|^2\partial_h^+ A_1.
\end{multline*}
Therefore, using Lemma~\ref{AA} for $\partial_h A_1$ and $\partial_t A_1$, we get 
\begin{align*}
	I_{12}
	=~& s\la \int_{-T}^{T} \int_{(0,1)} | \partial_t v|^2\varphi (\partial_{tt} \psi +\partial_{xx}\psi)
	+s\la^2 \int_{-T}^{T} \int_{(0,1)} | \partial_t v|^2\varphi (|\partial_t \psi|^2 + |\partial_x\psi|^2 )
	\\
	&-2s\la^2 \int_{-T}^{T} \int_{(0,1)} \varphi (\partial_t v) (\partial_h v)~\partial_t\psi~\partial_x\psi 
	-  \dfrac{s\la}2 \int_{-T}^{T} \int_{[0,1)} | h\partial_h^+\partial_t v|^2\partial_h^+ A_1 
	\\
	&
	+s \int_{-T}^{T} \int_{(0,1)}\mathcal O_\la(sh) | \partial_t v|^2 +s\int_{-T}^{T} \int_{(0,1)} \mathcal O_\la(sh) \partial_t v\partial_h v  .
\end{align*}
%

\noindent{\it Computation of $I_{21}$.}  Using \eqref{IPP1bis} and \eqref{A0},
\begin{align*}
	I_{21} =~&
		s\la^2\int_{-T}^{T} \int_{(0,1)}\Delta_h v(1+A_0) (\varphi |\partial_t \psi|^2-A_3) v 
		\\
		=~&
		-s \la^2 \int_{-T}^{T} \int_{[0,1)} |\partial_h^+ v|^2 m_h^+((1+A_0)(\varphi |\partial_t \psi|^2-A_3))
		 \\
		&+\dfrac {s \la^2}2 \int_{-T}^{T} \int_{(0,1)}  |v|^2 \Delta_h((1+A_0) (\varphi |\partial_t \psi|^2-A_3) )
		\\
		=~&-s \la^2 \int_{-T}^{T} \int_{[0,1)} |\partial_h^+ v|^2 \varphi (|\partial_t \psi|^2 - |\partial_x \psi|^2) 
		\\
		&+\dfrac {s \la^2}2 \int_{-T}^{T} \int_{(0,1)}  |v|^2 \partial_{xx} (\varphi |\partial_t \psi|^2-\varphi |\partial_x \psi|^2) 
		\\
		&+s \int_{-T}^{T} \int_{[0,1)} \mathcal O_\la(sh) |\partial_h^+v|^2 + s  \int_{-T}^T \int_{(0,1)} \mathcal{O}_\lambda(sh) |v|^2.
\end{align*}
We do not develop the term in $\partial_{xx} (\varphi |\partial_t \psi|^2-\varphi |\partial_x \psi|^2)$ since it is uniformly bounded with respect to $s$. \\

\noindent{\it Computation of $I_{22}$.}
We can split this term in two parts as follows
$$
	I_{22} = \underbrace{2s\la\int_{-T}^{T} \int_{(0,1)}\Delta_h v (1+A_0) \varphi \partial_t \psi \partial_t v}_{I_{22a}}
		 - \underbrace{2s\la\int_{-T}^{T} \int_{(0,1)}\Delta_h v (1+A_0)  A_1\partial_h v}_{I_{22b}}.
$$
To compute $I_{22a}$, we use $\Delta_h = \partial_h^- \partial_h^+$, $\partial_t v_0= \partial_t v_{N+1}=0$, and formula \eqref{dhuv} and \eqref{IPP00}:
\begin{align*}
	I_{22a} =~& - 2 s \lambda \int_{-T}^T \int_{[0,1)} (\partial_h^+ v) \partial_h^+( (1+A_0) \varphi \partial_t \psi \partial_t v)
	\\
	= ~&- 2 s \lambda \int_{-T}^T \int_{[0,1)} (\partial_h^+ v) m_h^+(\partial_t v) \partial_h^+( (1+A_0) \varphi \partial_t \psi)
	\\
	& + s \lambda \int_{-T}^T \int_{[0,1)} |\partial_h^+ v|^2 \partial_t  m_h^+( (1+A_0) \varphi \partial_t \psi).
	\\
	=~& - 2 s \lambda \int_{-T}^T \int_{[0,1)} (\partial_h^+ v) m_h^+(\partial_t v) \partial_x(\varphi \partial_t \psi)
	 + s \lambda \int_{-T}^T \int_{[0,1)} |\partial_h^+ v|^2 \partial_t  (\varphi \partial_t \psi)
	\\
	& + s  \int_{-T}^T \int_{[0,1)} \mathcal{O}_{\lambda} (sh) |\partial_h^+ v|^2  + s \int_{-T}^T \int_{[0,1)} \mathcal{O}_{\lambda} (sh)(\partial_h^+ v) m_h^+(\partial_t v).
\end{align*}
For the computation of $I_{22b}$, we rather use \eqref{IPP2} and \eqref{A0}:
\begin{align*}
	I_{22b}  =~& -s \lambda \int_{-T}^T \int_{[0,1)} |\partial_h^+ v|^2 \partial_h^+ ((1+A_0) A_1) 
	\\
	& + s \lambda \int_{-T}^T ((1+A_0) A_1)(t,1) |(\partial_h^- v)_{N+1}|^2
	\\
	& - s \lambda \int_{-T}^T ((1+A_0) A_1)(t,0) |(\partial_h^+ v)_{0}|^2
	\\
	 = ~&-s \lambda \int_{-T}^T \int_{[0,1)} |\partial_h^+ v|^2 \partial_x (\varphi \partial_x \psi) + s  \int_{-T}^T \int_{[0,1)}\mathcal{O}_{\lambda} (sh) |\partial_h^+ v|^2
	\\
	& + s \lambda \int_{-T}^T\left( [\varphi \partial_x \psi](t,1) + \mathcal{O}_\lambda(sh) \right) |(\partial_h^- v)_{N+1}|^2 
	\\
	&- s \lambda \int_{-T}^T \left( [\varphi \partial_x \psi](t,0) + \mathcal{O}_\lambda(sh) \right) |(\partial_h^+ v)_{0}|^2.
\end{align*}
Therefore, $I_{22} = I_{22a} - I_{22b}$ gives
\begin{align*}
	I_{22}  =~& 
		 - 2 s \lambda^2 \int_{-T}^T \int_{[0,1)} (\partial_h^+ v) m_h^+(\partial_t v) \varphi \partial_x\psi \partial_t \psi
		 \\
		 &
		 + s \lambda^2 \int_{-T}^T \int_{[0,1)} |\partial_h^+ v|^2  \varphi (|\partial_t \psi|^2+   |\partial_x \psi|^2)
		 \\
		 &
		 + s \lambda \int_{-T}^T \int_{[0,1)} |\partial_h^+ v|^2  \varphi (\partial_{tt} \psi +   \partial_{xx} \psi)
		\\
		& - s \lambda \int_{-T}^T\left( [\varphi \partial_x \psi](t,1) + \mathcal{O}_\lambda(sh) \right) |(\partial_h^- v)_{N+1}|^2 
		\\
		&
		+s \lambda \int_{-T}^T \left( [\varphi \partial_x \psi](t,0) + \mathcal{O}_\lambda(sh) \right) |(\partial_h^+ v)_{0}|^2
		\\
		& + s \int_{-T}^T \int_{[0,1)}  \mathcal{O}_{\lambda} (sh) |\partial_h^+ v|^2  + s \int_{-T}^T \int_{[0,1)} \mathcal{O}_{\lambda} (sh)(\partial_h^+ v) m_h^+(\partial_t v).
\end{align*}


\noindent{\it Computation of $I_{31}$.} From Lemma~\ref{AA}, one easily obtains:
\begin{align*}
	I_{31}
		=~&
		-s^3\la^4\int_{-T}^{T} \int_{(0,1)}  |v|^2(\varphi^2|\partial_t \psi|^2-A_2)(\varphi |\partial_t \psi|^2 - A_3)
	\\
		=~&
		-s^3\la^4 \int_{-T}^{T} \int_{(0,1)} |v|^2 \varphi^3 (|\partial_t \psi|^2-|\partial_x\psi|^2)^2
		+s^3 \int_{-T}^{T} \int_{(0,1)} \mathcal{O}_{\lambda} (sh)|v|^2 .
\end{align*}

\noindent{\it Computation of $I_{32}$.}
Finally, from \eqref{IPP0} of Lemma \ref{IPP}, we get
\begin{align*}
	I_{32}
		=~&
		-2s^3\la ^3\int_{-T}^{T} \int_{(0,1)}  v(\varphi^2|\partial_t \psi|^2-A_2)( \varphi \partial_t \psi \partial_t v -A_1 \partial_h v) 
		\\
		=~&
		s^3\la ^3 \int_{-T}^{T} \int_{(0,1)}|v|^2 \partial_t((\varphi^2|\partial_t \psi|^2-A_2) \varphi \partial_t \psi) 
		\\
		&
		-s^3\la ^3 \int_{-T}^{T} \int_{(0,1)}|v|^2 \partial_h(A_1(\varphi^2|\partial_t \psi|^2-A_2)) 
		\\
		&+\dfrac{s^3\la ^3}2 \int_{-T}^{T} \int_{(0,1)} |h\partial_h^+v|^2 \partial_h^+ (A_1(\varphi^2|\partial_t \psi|^2-A_2)).
\end{align*}
But, according to Lemma~\ref{AA}, 
\begin{align*}
	\partial_t((\varphi^2|\partial_t \psi|^2-A_2) \varphi \partial_t \psi) 
	=&
	~3\la \varphi^3 |\partial_t \psi|^2 \left(  |\partial_t \psi|^2 - |\partial_x \psi|^2 \right)
	\\
	&
	+3 \varphi^3 \partial_{tt}\psi  |\partial_t \psi|^2 - \varphi^3 \partial_{tt}\psi  |\partial_x \psi|^2  + \mathcal O_\la(sh)
\end{align*}
and
\begin{align}
	\partial_h(A_1(\varphi^2|\partial_t \psi|^2-A_2)) 
		=&
	 ~3\la \varphi^3 |\partial_x \psi|^2\left(|\partial_t \psi|^2 - |\partial_x \psi|^2\right) 
	 \nonumber\\
		& + \varphi^3\partial_{xx}\psi |\partial_t \psi|^2 - 3 \varphi^3 |\partial_x \psi|^2 \partial_{xx} \psi 
	+ \mathcal O_\la(sh). \nonumber
\end{align}
Thus we obtain
\begin{align*}
	I_{32}
		=~&
	3s^3\la^4 \int_{-T}^{T} \int_{(0,1)} |v|^2 \varphi^3 (|\partial_t \psi|^2-|\partial_x\psi|^2)^2
	\\
		&
	+3s^3\la ^3 \int_{-T}^{T} \int_{(0,1)}|v|^2 \varphi^3\left(|\partial_t \psi|^2 \partial_{tt} \psi +|\partial_x \psi|^2\partial_{xx} \psi  \right) 
	\\
		&-s^3\la ^3 \int_{-T}^{T} \int_{(0,1)}|v|^2 \varphi^3\left( |\partial_x \psi|^2 \partial_{tt} \psi + |\partial_t \psi|^2\partial_{xx} \psi\right) 
	\\
		&
	+s\int_{-T}^{T} \int_{(0,1)} \mathcal{O}_{\lambda}(sh) |\partial_h^+v|^2 +s^3 \int_{-T}^{T} \int_{(0,1)}  \mathcal{O}_{\lambda}(sh)|v|^2.
\end{align*}

\noindent{\it Final computation.}
Gathering all the terms, one can write
\begin{equation}
	\int_{-T}^{T} \int_{(0,1)} P_{h,1}v\,  P_{h,2}v  = I_v + I_{\partial_t, \nabla v}  + I_{\{0,1\}}+I_\tn{Tych}, 
\end{equation}
where $I_v$ contains all the terms in $|v|^2$:
\begin{equation}
	I_v
	 = 
	\int_{-T}^T \int_{(0,1)} |v|^2 F ,
\end{equation}
with $F$ given by
\begin{align*}
	F =~& 
	-\dfrac{s\la^2}2 \partial_{tt}(\varphi |\partial_t \psi|^2-\varphi |\partial_x \psi|^2) 
	+\dfrac {s \la^2}2 \partial_{xx} (\varphi |\partial_t \psi|^2-\varphi |\partial_x \psi|^2)
	\\
	& 
	+ s^3 \lambda^3  \varphi^3 (|\partial_t \psi|^2 - |\partial_x \psi|^2)(\partial_{tt} \psi - \partial_{xx} \psi)
	 +2 s^3 \lambda^3 \varphi^3 (|\partial_t \psi|^2 \partial_{tt} \psi + |\partial_x \psi|^2 \partial_{xx} \psi) 
	\\
	&
	+ 2 s^3 \lambda^4 \varphi^3 (|\partial_t \psi^2 - |\partial_x \psi|^2)^2 + s^3 \mathcal{O}_\lambda(sh)
	\\
	 = ~&
	 s^3 \lambda^3  \varphi^3 (|\partial_t \psi|^2 - |\partial_x \psi|^2)(\partial_{tt} \psi - \partial_{xx} \psi)
	 +2  s^3 \lambda^3 \varphi^3 (|\partial_t \psi|^2 \partial_{tt} \psi + |\partial_x \psi|^2 \partial_{xx} \psi) 
	\\
	&
	+ 2 s^3 \lambda^4 \varphi^3 (|\partial_t \psi|^2 - |\partial_x \psi|^2)^2 + s^3 \mathcal{O}_\lambda(sh) + s \mathcal{O}_\lambda(1)~;
\end{align*}
$I_{\partial_t, \nabla v}$ contains all the terms involving first order derivatives of $v$:
\begin{align*}
	\lefteqn{I_{\partial_t, \nabla v} 
		=  2 s \la ^2 \int_{-T}^{T} \int_{(0,1)}|\partial_t v|^2\varphi|\partial_t \psi|^2
		+ 2 s \la ^2 \int_{-T}^{T} \int_{[0,1)}|\partial_h^+ v|^2\varphi |\partial_x \psi|^2
	}\\
		&- 2s\la^2 \int_{-T}^{T} \int_{(0,1)} \varphi (\partial_t v) (\partial_h v)~\partial_t\psi~\partial_x\psi 
	- 2 s \lambda^2 \int_{-T}^T \int_{[0,1)}\varphi (\partial_h^+ v) m_h^+(\partial_t v) \partial_x \psi \partial_t \psi
	\\
		&+s\la \int_{-T}^{T} \int_{(0,1)}| \partial_t v|^2 \varphi ( \partial_{tt} \psi + \partial_{xx}\psi)
	+s\la \int_{-T}^{T} \int_{[0,1)}| \partial_h^+ v|^2 \varphi ( \partial_{tt} \psi + \partial_{xx}\psi)
	\\
		&+ s\int_{-T}^T \int_{(0,1)} \Big( \mathcal{O}_{\lambda}(sh) |\partial_t v|^2 +  \mathcal{O}_{\lambda}(sh)\partial_t v\partial_h v \Big)
	\\
	&
	+s\int_{-T}^T \int_{[0,1)} \Big( \mathcal{O}_{\lambda}(sh)|\partial_h^+ v|^2 +   \mathcal{O}_{\lambda}(sh)m_h^+(\partial_t v) \partial_h^+ v \Big)~;		
\end{align*}
$I_{\{0,1\}}$ contains all the boundary terms:
\begin{align*}
	I_{\{0,1\}}	=
		& - s \lambda \int_{-T}^T\left( [\varphi \partial_x \psi](t,1) + \mathcal{O}_\lambda(sh) \right) |(\partial_h^- v)_{N+1}|^2 
		\\
		&
		+s \lambda \int_{-T}^T \left( [\varphi \partial_x \psi](t,0) + \mathcal{O}_\lambda(sh) \right) |(\partial_h^+ v)_{0}|^2;
\end{align*}
$I_\tn{Tych}$ contains the term corresponding to the Tychonoff regularization:
\begin{equation}
	I_\tn{Tych} = -  \dfrac{s\la}2 \int_{-T}^{T} \int_{[0,1)} | h\partial_h^+\partial_t v|^2\partial_h^+ A_1.
\end{equation}

\noindent {\bf Step 2. Bounding each term from below.}\\
In the sequel, $M>0$ and $C>0$ will denote generic constants that all are independent of $h$ and~$s$ but may depend on $\lambda$.\\
\noindent {\it Step 2.1. Dealing with the $0$ order terms in $v$.} \\
We have:
$$
	F=  s^3 \lambda^3 \varphi^3 G +  s^3 \mathcal{O}_\lambda(sh) + s \mathcal{O}_\lambda(1),
$$
with 
\begin{align*}
	G
	 = ~&2 \lambda  (|\partial_t \psi|^2 - |\partial_x \psi|^2)^2+ (|\partial_t \psi|^2 - |\partial_x \psi|^2) (\partial_{tt} \psi - \partial_{xx} \psi)
	 +2  (|\partial_t \psi|^2 \partial_{tt} \psi + |\partial_x \psi|^2 \partial_{xx} \psi) 
	\\
	 = ~&2 \lambda  (|\partial_t \psi|^2 - |\partial_x \psi|^2)^2+ (|\partial_t \psi|^2 - |\partial_x \psi|^2) (\partial_{tt} \psi - \partial_{xx} \psi)
	 +2  (|\partial_t \psi|^2 - |\partial_x \psi|^2) \partial_{tt} \psi 
	 \\
	& 
+2  |\partial_x \psi|^2 ( \partial_{tt} \psi+ \partial_{xx} \psi ) .
\end{align*}
But  $\partial_{tt} \psi+ \partial_{xx} \psi = 2(1- \beta)>0$ and $\inf_{(0,1)}  |\partial_x \psi|^2 = 4 \inf_{(0,1)}  |x-x^0|^2$ is strictly positive since $x^0 \notin [0,1]$. Therefore, setting $X= |\partial_t \psi|^2 - |\partial_x \psi|^2$, we have
$$
	G \geq 2 \lambda X^2 - 2X(3 \beta +1)  + c, \quad \hbox{ with } c =  16(1- \beta) \inf_{(0,1)}  |x-x^0|^2>0.
$$
Thus, there exists $\lambda_0>0$ large enough such that for all $\lambda \geq \lambda_0$, this expression can be made strictly positive.
Therefore, for $\lambda \geq \lambda_0$, we get a positive constant $c_*>0$ independent of $\lambda$ such that 
$$
	I_v \geq 2 c_* s^3 \lambda^3 \int_{-T}^T \int_{(0,1)} \varphi^3 |v|^2 - (s^3 \mathcal{O}_\lambda(sh) + s \mathcal{O}_\lambda(1) ) \int_{-T}^T \int_{(0,1)}  |v|^2.
$$
Thereby, bounding $\varphi$ from below by $1$, we can choose $s_0(\lambda)$ such that for all $s \geq s_0(\lambda)$,
\begin{equation}\label{order0}
	I_v \geq  c_* s^3 \lambda^3 \int_{-T}^T \int_{(0,1)} |v|^2 - s^3 \mathcal{O}_\lambda(sh)  \int_{-T}^T \int_{(0,1)}  |v|^2.
\end{equation}
From then on, we fix $\lambda \geq \lambda_0$.

\noindent {\it Step 2.2. Dealing with the first-order derivatives.}\\
The idea is to show that the terms in which $s\lambda^2$ appears are positive up to a small error term, and then check that the terms in $s\lambda$ are strictly positive.

On the one hand, 
\begin{multline*}
		\left| \int_{-T}^{T} \int_{(0,1)} \varphi (\partial_t v) (\partial_h v)~\partial_t\psi~\partial_x\psi \right|
	\leq
	 \dfrac 12 \int_{-T}^{T} \int_{(0,1)} \varphi |\partial_t v|^2|\partial_t \psi|^2  
	+ 
	\dfrac 12 \int_{-T}^{T} \int_{(0,1)} \varphi  |\partial_h v|^2  |\partial_x \psi|^2 
	\\
	\leq
	\dfrac 12 \int_{-T}^{T} \int_{(0,1)} \varphi |\partial_t v|^2|\partial_t \psi|^2  
	+
	\dfrac 12 \int_{-T}^{T} \int_{[0,1)} \varphi  |\partial_h^+ v|^2  |\partial_x \psi|^2 
	+ 
	\mathcal O_{\lambda}(sh) \int_{-T}^{T} \int_{[0,1)}  |\partial_h^+ v|^2
\end{multline*}
and similarly, 
\begin{eqnarray*}
	\lefteqn{
		\left| \int_{-T}^{T} \int_{[0,1)} \varphi  (\partial_h^+ v) ~m_h^+(\partial_t v) ~\partial_t\psi~\partial_x\psi \right|
		}
		\\
	&\leq& 
	\dfrac 12 \int_{-T}^{T} \int_{[0,1)} \varphi  |\partial_h^+ v|^2  |\partial_x \psi|^2 
	+
	\dfrac 12 \int_{-T}^{T} \int_{[0,1)} \varphi  |m_h^+(\partial_t v)|^2  |\partial_t \psi|^2
		\\
	&\leq& 
	\dfrac 12 \int_{-T}^{T} \int_{[0,1)} \varphi  |\partial_h^+ v|^2  |\partial_x \psi|^2 
	+
	\dfrac {1}2 \int_{-T}^{T} \int_{(0,1)} \varphi  |\partial_t v|^2  |\partial_t \psi|^2   
	+ \mathcal O_{\lambda}(sh) \int_{-T}^{T} \int_{(0,1)}  |\partial_t v|^2.
\end{eqnarray*}
Therefore, 
\begin{multline*}
		 2 s \la ^2 \int_{-T}^{T} \int_{(0,1)}|\partial_t v|^2\varphi|\partial_t \psi|^2
		+ 2 s \la ^2 \int_{-T}^{T} \int_{[0,1)}|\partial_h^+ v|^2\varphi |\partial_x \psi|^2
	\\
		-2s\la^2 \int_{-T}^{T} \int_{(0,1)} \varphi (\partial_t v) (\partial_h v)~\partial_t\psi~\partial_x\psi 
		-2 s \lambda^2 \int_{-T}^T \int_{[0,1)}\varphi (\partial_h^+ v) m_h^+(\partial_t v) \partial_x \psi \partial_t \psi
	\\
	 	  \geq  - ~s \mathcal O_{\lambda}(sh) \int_{-T}^{T} \int_{[0,1)}  |\partial_h^+ v|^2
		 -  s \mathcal{O}_\lambda(sh) \int_{-T}^{T} \int_{(0,1)}  |\partial_t v|^2. 
\end{multline*}

On the other hand, focusing on the terms in $s \lambda$, we have $ \partial_{tt} \psi + \partial_{xx}\psi  = 2(1- \beta) >0$, and then, bounding $\varphi = e^{\lambda \psi}$ from below by $1$, we obtain:
\begin{align}
	I_{\partial_t, \nabla v} 
		\geq ~&
	2 s \lambda (1- \beta) \int_{-T}^T \int_{[0,1)} |\partial_h^+ v|^2 + 2 s \lambda (1- \beta) \int_{-T}^T \int_{(0,1)} |\partial_t v|^2 
		\nonumber\\
		& -
	s \mathcal O_{\lambda}(sh) \int_{-T}^{T} \int_{[0,1)}  |\partial_h^+ v|^2 -  s \mathcal{O}_\lambda(sh) \int_{-T}^{T} \int_{(0,1)}  |\partial_t v|^2,\label{order1}
\end{align}
where we used that, by Cauchy Schwartz, 
$$
	\int_{-T}^T \int_{(0,1)} |\partial_h v |^2 \leq  \int_{-T}^T \int_{[0,1)} |\partial_h^+ v|^2,
	\quad
	\int_{-T}^T \int_{[0,1)}  |m_h^+(\partial_t v)|^2 \leq  \int_{-T}^T \int_{(0,1)} |\partial_t v|^2.
$$

\noindent {\it Step 2.3. The boundary terms.} \\
Since $\min_{(-T, T) \times (0,1)} \varphi \partial_x \psi  >0$ (recall $x^0 <0$), then there exists $\varepsilon_1 (\lambda)>0$ such that taking $sh \leq \varepsilon_1(\lambda)$, 
$$
	\mathcal{O}_\lambda(sh) \leq \min_{(t,x) \in (-T, T) \times (0,1)}\left\{ \varphi(t,x) \partial_x \psi(t,x)\right\}.
$$

Therefore,
\begin{align}
	I_{\{0,1\}}	=
		& - s \lambda \int_{-T}^T\left( [\varphi \partial_x \psi](t,1) + \mathcal{O}_\lambda(sh) \right) |(\partial_h^- v)_{N+1}|^2 
		+s \lambda \int_{-T}^T \left( [\varphi \partial_x \psi](t,0) + \mathcal{O}_\lambda(sh) \right) |(\partial_h^+ v)_{0}|^2
		\nonumber\\
		\geq 
		& - 2 s \lambda \int_{-T}^T [\varphi \partial_x \psi](t,1) |(\partial_h^- v)_{N+1}|^2
		\geq 
		 -  s C_\lambda \int_{-T}^T |(\partial_h^- v)_{N+1}|^2.
		 \label{boundary}
\end{align}

\noindent {\it Step 2.4. The Tychonoff regularization.}\\
Let us recall that 
$\partial_h^+ A_1 = \lambda \varphi |\partial_x \psi|^2 + \varphi \partial_{xx} \psi + \mathcal{O}_\lambda(sh) 
= \mathcal{O}_\lambda(1) $ since $sh \leq \varepsilon(\lambda)$. Thus
\begin{equation}\label{tych}
	I_\tn{Tych}  = -  \dfrac{s\la}2 \int_{-T}^{T} \int_{[0,1)} | h\partial_h^+\partial_t v|^2\partial_h^+ A_1
	 \geq -  s C_{\lambda}  \int_{-T}^{T} \int_{[0,1)} | h\partial_h^+\partial_t v|^2.
\end{equation}
Noticing that $ \lambda \varphi |\partial_x \psi|^2 + \varphi \partial_{xx} \psi>0$,  $I_\tn{Tych}\leq 0$ and cannot be made positive. This is not only a technical matter, since otherwise we would get uniform observability results for the semidiscrete wave equations, which is proved not to hold in \cite{InfZua}.\\

\noindent {\bf Step 3. Proof of Lemma \ref{LemDecompo}.}\\
Recall that $\lambda$ is fixed from Step 2.1.\\
Collecting the results \eqref{order1}-\eqref{tych},  of Step 2, we have proved that for $s\geq s_0(\lambda)$ and $sh \leq \varepsilon_1(\lambda)$,
\begin{eqnarray*}
	\int_{-T}^{T} \int_{(0,1)} P_{h,1}v \,P_{h,2}v
	 &\geq&~ 
	2 s \lambda (1- \beta) \int_{-T}^T \int_{[0,1)} |\partial_h^+ v|^2 
	+ ~2 s \lambda (1- \beta) \int_{-T}^T \int_{(0,1)} |\partial_t v|^2 
	\\
	&& 
	 - 	s \mathcal O_{\lambda}(sh) \int_{-T}^{T} \int_{[0,1)}  |\partial_h^+ v|^2 
	- ~ s \mathcal{O}_\lambda(sh) \int_{-T}^{T} \int_{(0,1)}  |\partial_t v|^2
		\\
	&&	
	+ c_* s^3 \lambda^3 \int_{-T}^T \int_{(0,1)}  |v|^2
	 - ~s^3 \mathcal{O}_\lambda(sh)  \int_{-T}^T \int_{(0,1)}  |v|^2
	\\
		&&
	 - s C_\lambda \int_{-T}^T  |(\partial_h^- v)_{N+1}|^2
			- ~ sC_\lambda  \int_{-T}^{T} \int_{[0,1)} | h\partial_h^+\partial_t v|^2. 
\end{eqnarray*}
Therefore, taking $sh$ small enough such that 
$$
	\mathcal{O}_\lambda(sh) \leq\min\left\{ \lambda (1- \beta)  , \frac{c_* \lambda^3}{2}, \varepsilon_1(\lambda)  \right\}, 
$$
which defines $\varepsilon(\lambda)>0$, we obtain, for some constant $M_1= M_1(\lambda)>0$,
\begin{multline}
	 s \int_{-T}^{T} \int_{(0,1)}| \partial_t v|^2 
	+  s \int_{-T}^{T} \int_{[0,1)}\ |\partial_h^+ v|^2 
	+	s^3 \int_{-T}^{T} \int_{(0,1)}|v|^2
	\\
	\leq 
	M_1 \int_{-T}^{T} \int_{(0,1)} P_{1,h} v \, P_{2,h} v
	+
	M_1 s \int_{-T}^{T}  \left| (\partial_h^- v)_{N+1} \right|^2  
	\label{decompo-0}  
	+ 
	M_1 s \int_{-T}^{T} \int_{[0,1)} |h \partial_h^+\partial_t v|^2 .
\end{multline}
Now, we use that from \eqref{double}, 
\begin{multline*}
	2 \int_{-T}^{T} \int_{(0,1)} P_{1,h} v \, P_{2,h} v + \int_{-T}^{T} \int_{(0,1)} |P_{1,h} v|^2 + \int_{-T}^{T} \int_{(0,1)} |P_{1,h} v|^2 
		\\
	\leq
	2 \int_{-T}^{T} \int_{(0,1)} |P_{h} v|^2 +  2 \int_{-T}^{T} \int_{(0,1)} |R_h v|^2,
\end{multline*}
where $R_h v$ is given by \eqref{Rh}, which yields, for some $M_2(\lambda)>0$,
\begin{multline}
	 s \int_{-T}^{T} \int_{(0,1)}| \partial_t v|^2 
	+  s \int_{-T}^{T} \int_{[0,1)}\ |\partial_h^+ v|^2
	+	s^3 \int_{-T}^{T} \int_{(0,1)}|v|^2
	\\
	+ \int_{-T}^{T} \int_{(0,1)}| P_{h,1}v| ^2
	+
	\int_{-T}^{T} \int_{(0,1)}| P_{h,2}v| ^2 
	\leq 
	M_2 \int_{-T}^{T} \int_{(0,1)} |P_h v|^2 
	\\
	+
	M_2 \int_{-T}^{T} \int_{(0,1)} |R_h v|^2 
	\label{decompo-00}  
	+
	M_2 s \int_{-T}^{T}  \left| (\partial_h^- v)_{N+1} \right|^2  
	+ 
	M_2 s \int_{-T}^{T} \int_{[0,1)} |h \partial_h^+\partial_t v|^2.
\end{multline}
Therefore, since
$$
	 \int_{-T}^{T} \int_{(0,1)} |R_h v|^2 \leq  s^2 \lambda^2  \int_{-T}^{T} \int_{(0,1)} |v|^2 \left(\varphi \partial_{tt} \psi - A_4 \right)^2
	 \leq s^2 \mathcal{O}_\lambda(1) \int_{(0,1)} |v|^2,
$$
this term can be absorbed by the left hand side of \eqref{decompo-00} by taking $s$ large enough, thus yielding to \eqref{decompo}.
\end{proof}

\begin{proof}[Proof of Theorem \ref{CarlemanD}]
The Carleman estimate \eqref{CarlemD} of Theorem \ref{CarlemanD} will now be deduced from Lemma \ref{LemDecompo}. Actually, it simply consists in writing \eqref{decompo} in terms of $w$ instead of $v$, using that $w = v \exp(- s \varphi)$ and, by construction, see \eqref{PhDef}, $\exp(s \varphi) L_h w = P_h v$.

In particular, we have
\begin{align*}
	e^{s \varphi} \partial_t w 
		& = e^{s \varphi} \partial_t (v e^{-s \varphi} )= \partial_t v - s \lambda v \varphi \partial_t \psi ,
	\\
	e^{s \varphi} \partial_h^+ w
		& = e^{s \varphi} \partial_h^+ (v e^{-s \varphi}) = \partial_h^+ v (e^{s\varphi} m_h^+(e^{-s\varphi})) - e^{ s\varphi} \partial_h^+ (e^{-s\varphi} ) m_h^+ v, 
\end{align*}
and, since, similarly as in Lemma \ref{rhorho},
$$
	e^{s\varphi} m_h^+(e^{-s\varphi}) = 1 + \mathcal{O}_\lambda(sh) \hbox{ and }  e^{ s\varphi} \partial_h^+ (e^{-s\varphi} ) = - s \lambda \varphi  \partial_x \psi + \mathcal{O}_\lambda(sh), 
$$
we get, for $sh$ small enough,
\begin{align*}
	e^{2 s \varphi} |\partial_t w |^2
		& \leq 2 |\partial_t v|^2 + 2s^2 \lambda^2 \varphi^2 |\partial_t \psi|^2 |v|^2 ,
	\\
	e^{2s \varphi} |\partial_h^+ w|^2
		& \leq 3 | \partial_h^+ v|^2+ 3 s^2 \lambda^2 \varphi^2 |\partial_x \psi|^2  |m_h^+ v|^2.
\end{align*}
Therefore, there exists $M_3(\lambda)$ such that
\begin{multline}
	 s \int_{-T}^{T} \int_{(0,1)}e^{2 s \varphi} | \partial_t w|^2  
	+  
	s \int_{-T}^{T} \int_{[0,1)}e^{2 s\varphi} |\partial_h^+ w|^2
	+
	s^3 \int_{-T}^{T} \int_{(0,1)} e^{2 s \varphi}  |w|^2
	\\
	\leq 
	M_3 s \int_{-T}^{T} \int_{(0,1)}| \partial_t v|^2  
	+  
	M_3 s \int_{-T}^{T} \int_{[0,1)}\ |\partial_h^+ v|^2 
	+
	M_3 s^3 \int_{-T}^{T} \int_{(0,1)}|v|^2.
	\label{DistributedTermsV-W}
\end{multline}

Now, it remains to estimate the right hand side of \eqref{decompo} in terms of $w$. For the boundary term, we use the fact that $\varphi(t,\cdot)$ is increasing and therefore
\begin{equation}
	\label{BoundaryTerms}
	|(\partial_h^- v)_{N+1}| = \left| \frac{v_N}h\right|  =  \left| \frac{w_N e^{s \varphi_N}}h\right| \\
	\leq  \left| \frac{w_N}h\right| e^{s \varphi(t,1)} = |(\partial_h^- w)_{N+1}| e^{s \varphi(t,1)}.
\end{equation}
Finally, we bound the term corresponding to the Tychonoff regularization, using \eqref{dhuv} of Lemma~\ref{IPP} and  
$\partial_t v = e^{s\varphi}\partial_t w + w \partial_t(e^{s\varphi})$: 
\begin{align*}
	h \partial_h^+ \partial_t v 
		&= 
	h \partial_h^+ (\partial_t w) m_h^+(e^{s \varphi}) + h (\partial_h^+ w) m_h^+ \partial_t (e^{s \varphi}) 
	\\
	&\quad + m_h^+ (\partial_t w) h \partial_h^+ (e^{s \varphi}) + (m_h^+ w) h \partial_h^+ (\partial_t e^{s \varphi}).
\end{align*}
Again, similarly to Lemma \ref{rhorho}, one can prove
$$
\frac{h m_h^+(\partial_t(e^{s\varphi}))}{e^{s\varphi}} = \mathcal{O}_\lambda(sh), 
\quad
\frac{h \partial_h^+(e^{s\varphi})}{e^{s\varphi}} = \mathcal{O}_\lambda(sh),
\quad \tn{and } \quad 
\frac{h \partial_h^+(\partial_t e^{s\varphi})}{e^{s\varphi}} = s \mathcal{O}_\lambda(sh),
$$
and deduce a bound of the form:
\begin{align*}
	|h \partial_h^+ \partial_t v |
		&\leq 
	 |h \partial_h^+ (\partial_t w)| e^{s \varphi} (1 +  \mathcal{O}_\lambda(sh)) + \mathcal{O}_\lambda(sh)  |\partial_h^+ w| e^{s \varphi}
	\\
	& 
	\quad + \mathcal{O}_\lambda(sh) |m_h^+ (\partial_t w)| e^{s \varphi} + s \mathcal{O}_\lambda(sh) |m_h^+ w| e^{s \varphi}.
\end{align*}
Therefore, one gets:
\begin{align}
	 \lefteqn{
	 s \int_{-T}^{T} \int_{[0,1)} |h \partial_h^+\partial_t v|^2 
	}
	\nonumber\\
	 \leq &  ~M_4 s  \int_{-T}^{T} \int_{[0,1)} e^{2 s\varphi} |h \partial_h^+\partial_t w|^2
	+ M_4 s  \mathcal{O}_\lambda(sh) \int_{-T}^{T} \int_{[0,1)}e^{2 s \varphi}\left(  |\partial_h^+ w|^2 + |m_h^+ (\partial_t w)|^2\right)
	\nonumber	\\
	& 
	+M_4 s^3  \mathcal{O}_\lambda(sh) \int_{-T}^{T} \int_{[0,1)}e^{2 s \varphi}  |m_h^+ w|^2 
	\nonumber	\\
	\leq &
	~M_5 s \int_{-T}^{T} \int_{[0,1)} e^{2 s\varphi} |h \partial_h^+\partial_t w|^2 
	\label{TycV-W}
	 + M_5 s  \mathcal{O}_\lambda(sh) \int_{-T}^{T} \int_{[0,1)}e^{2 s \varphi}  |\partial_h^+ w|^2 
	\\
	&
	+ M_5 s  \mathcal{O}_\lambda(sh) \int_{-T}^{T} \int_{(0,1)}e^{2 s \varphi}  |\partial_t w|^2 
	+M_5 s^3  \mathcal{O}_\lambda(sh) \int_{-T}^{T} \int_{(0,1)}e^{2 s \varphi}  |w|^2 .
	\nonumber
\end{align}

Hence, combining \eqref{DistributedTermsV-W}--\eqref{BoundaryTerms}--\eqref{TycV-W}, plugging them in \eqref{decompo} and choosing $sh$ small enough so that $M_0 M_3 M_5 \mathcal{O}_\lambda(sh) \leq 1/2$ and $s h \leq \varepsilon_0$ (given by Lemma \ref{LemDecompo}), we obtain the desired Carleman estimate~\eqref{CarlemD}.
\end{proof}

\section{Uniform stability estimates}\label{InversePb}

In this section, we state and prove uniform stability results for the semi-discrete framework, announced by \eqref{UniformStability-2} in the introduction. 

\subsection{Statements of the results}

Similarly to Theorem~\ref{TCWE1}, we will prove the following local stability result:
\begin{theorem}\label{TCWE2}
Let $m>0$, $K>0$, $r>0$, $T>1$ and $p_h \in L^\infty_{h,\leq m}(0,1)$.

Consider the equation
$$
	\left\{\begin{array}{ll}
		\partial_{tt}y_{j,h} - \left(\Delta_h y_{h}\right)_j+p_{j,h} y_{j,h}=g_{j,h}, \quad & t\in (0,T), \ j\in \llbracket 1,N \rrbracket,\\
		y_{0,h}(t)=g_h^0(t), \quad y_{N+1,h}(t)=g_h^1(t),& t\in (0,T),\\
		y_{j,h}(0)= y_{j,h}^0, \quad \partial_t y_{j,h}(0)= y_{j,h}^1, &  j\in \llbracket 1,N \rrbracket,
	\end{array}\right.
$$
and assume that
\begin{equation}\label{RegDiscrete}
	\norm{ y[p_h]}_{H^1 (0,T;L_h^{\infty}(0,1)) } \leq K
\end{equation}
and
\begin{equation}
	\inf_{ j \in \llbracket 1, N\rrbracket} |y^{0}_{j,h}|\geq r.
\end{equation}
Then there exists a constant $C=C(T,m, K, r)>0$ independent of $h$ such that for all $ q_h\in L^\infty_{h,\leq m}(0,1)$, the uniform stability estimate \eqref{UniformStability-2} holds:
\begin{eqnarray}
	\norm{q_h - p_h}_{L^2_h(0,1)} &\leq& C \norm{\partial_t(\partial_h^-y_h)_{N+1}[q_h] - \partial_t(\partial_h^-y_h)_{N+1}[p_h]}_{L^2(0,T)} 
	\label{UniformStability-3}\\
	&&\hfill{}+ C \norm{h \partial_h^+\partial_{tt} y_h[q_h] - h \partial_h^+\partial_{tt} y_h[p_h]}_{L^2(0,T;L^2_h[0,1))}.\nonumber
\end{eqnarray}
\end{theorem}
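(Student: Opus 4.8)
The plan is to transpose the Bukhgeim--Klibanov strategy underlying Theorem~\ref{TCWE1} to the discrete setting, replacing the continuous Carleman estimate by its uniform discrete counterpart, Corollary~\ref{Corq}, while keeping careful track of the additional Tychonoff term that estimate produces. First I would reduce everything to a lower bound on $f_h := q_h - p_h$. Setting $u_h := y_h[q_h] - y_h[p_h]$ and subtracting the two equations yields $L_h[q_h]u_h = \partial_{tt}u_h - \Delta_h u_h + q_h u_h = -f_h\, y_h[p_h]$, with vanishing initial data $u_h(0)=\partial_t u_h(0)=0$ and vanishing boundary values $u_{0,h}=u_{N+1,h}=0$, since $q_h$ and $p_h$ share the same data. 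The decisive point is to differentiate in time: because $q_h$ and $f_h$ are time independent, $z_h := \partial_t u_h$ solves $L_h[q_h]z_h = -f_h\, \partial_t y_h[p_h]$ with $z_h(0)=0$ and, reading the equation at $t=0$ where $u_h(0)=0$,
\[
	\partial_t z_h(0) = \partial_{tt}u_h(0) = -f_h\, y_h^0 .
\]
Thus $f_h$ reappears at $t=0$ multiplied by the initial state $y_h^0$, which is bounded below by $r$. I would then extend $u_h$ (and $y_h[p_h]$) evenly to $t\in(-T,T)$, which is consistent since $u_h(0)=\partial_t u_h(0)=0$, so that $z_h$ becomes an admissible odd function on the symmetric interval.

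Next I would apply Corollary~\ref{Corq} to $w_h := \chi z_h$, where $\chi$ is an even cut-off equal to $1$ on $[-T_0,T_0]$ and supported in $(-T,T)$; this guarantees $w_h(\pm T)=\partial_t w_h(\pm T)=0$ and $w_{0,h}=w_{N+1,h}=0$, so all hypotheses hold. Since $L_h[q_h]w_h = -\chi f_h\partial_t y_h[p_h] + 2\chi'\partial_t z_h + \chi'' z_h$, the source splits into a principal term and commutator terms supported in $T_0\le|t|\le T$. There $\varphi$ is strictly smaller than on $\{t=0\}$: with $x^0$ close to $0^-$ and $\beta$ close to $1$, the requirement $\beta T_0^2 > \max_{[0,1]}|x-x^0|^2 - \min_{[0,1]}|x-x^0|^2 = 1-2x^0$ is met precisely because $T>1$, so $e^{2s\varphi}$ on $\mathrm{supp}\,\chi'$ is exponentially dominated by $e^{2s\varphi(\cdot,0)}$. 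Combined with a uniform-in-$h$ discrete energy estimate bounding $\norm{z_h}$ and $\norm{\partial_t z_h}$ by $CK\norm{f_h}_{L^2_h(0,1)}$ via \eqref{RegDiscrete}, the commutator contributions are absorbed for $s$ large.

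The crux is the extraction at $t=0$. Using the fundamental theorem of calculus in time together with a discrete integration by parts in space based on \eqref{IPP1}, I would bound the quantity $E := \int_{(0,1)} e^{2s\varphi(0,\cdot)}|\partial_t w_h(0)|^2$ by the left-hand side of the Carleman estimate; the boundary-in-time terms generated vanish because $z_h(0)\equiv 0$ (hence $\partial_h^+ z_h(0)\equiv 0$), and no spatial boundary term survives since $\partial_t w_h$ vanishes at $j=0,N+1$. Since $\partial_t w_h(0)=\partial_t z_h(0)=-f_h y_h^0$ and $|y_h^0|\ge r$, one has $E\ge r^2\int_{(0,1)}e^{2s\varphi(0,\cdot)}|f_h|^2$. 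Chaining $r^2\int_{(0,1)}e^{2s\varphi(0,\cdot)}|f_h|^2 \le E \le C\cdot(\text{Carleman LHS}) \le CM\cdot(\text{Carleman RHS})$ leaves, on the right, the principal source $\int\int e^{2s\varphi}\chi^2|f_h\partial_t y_h[p_h]|^2$, the boundary observation $s\int e^{2s\varphi(t,1)}|(\partial_h^- w_h)_{N+1}|^2$, and the Tychonoff term $s\int\int_{[0,1)}e^{2s\varphi}|h\partial_h^+\partial_t w_h|^2$. Using the regularity bound \eqref{RegDiscrete} and the concentration of $e^{2s\varphi}$ at $t=0$, the principal source is absorbed into the left-hand side by taking $s$ large.

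Finally, I would fix $s$; as $\varphi$ is then bounded above and below, all weights disappear up to constants. Recalling $w_h=\chi\partial_t u_h$ with $u_h=y_h[q_h]-y_h[p_h]$, the boundary observation becomes $\norm{\partial_t(\partial_h^- y_h)_{N+1}[q_h]-\partial_t(\partial_h^- y_h)_{N+1}[p_h]}_{L^2(0,T)}$, while $h\partial_h^+\partial_t w_h = \chi\, h\partial_h^+\partial_{tt} u_h + \chi'\, h\partial_h^+\partial_t u_h$ produces the second term of \eqref{UniformStability-3}, the $\chi'$-piece being controlled by the energy estimate. This yields \eqref{UniformStability-3} with a constant independent of $h$. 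The main obstacle, beyond the bookkeeping of the discrete integrations by parts and the uniformity in $h$ of the energy estimate, is to propagate the extra Tychonoff term consistently through the whole argument and to verify that it produces exactly the additional observation in \eqref{UniformStability-3} rather than spoiling the delicate balance that allows the source term to be absorbed.
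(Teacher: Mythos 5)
Your proposal takes essentially the same route as the paper. The paper factors the argument through an inverse source problem: setting $u_h=y_h[q_h]-y_h[p_h]$, $f_h=p_h-q_h$ and $R_h=y_h[p_h]$, Theorem \ref{TCWE2} is a one-line consequence of Theorem \ref{TCWE3}, whose proof is precisely your Bukhgeim--Klibanov scheme --- differentiate in time so that $\partial_t z_h(0)=f_hR_h(0)$ with $|R_h(0)|\ge r$, extend to $(-T,T)$, truncate by $\chi$, apply the uniform discrete Carleman estimate, extract $\int_{(0,1)} e^{2s\varphi(0,\cdot)}|f_h|^2$ at $t=0$, and absorb the source, commutator and Tychonoff contributions for $s$ large using the decay of $\varphi$ in $|t|$ and a uniform-in-$h$ energy estimate.

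One step is stated too loosely to close as written: you cannot bound $E=\int_{(0,1)} e^{2s\varphi(0,\cdot)}|\partial_t w_h(0)|^2$ by the left-hand side of Corollary \ref{Corq}. The extraction at $t=0$ proceeds through $\int_{-T}^0\int_{(0,1)}P_{h,1}v_h\,\partial_t v_h$, with $P_{h,1}$ as in \eqref{P1}, and after Cauchy--Schwarz this requires $\norm{P_{h,1}v_h}_{L^2}^2$ to be dominated by the Carleman right-hand side. That quantity is not controlled by the left-hand side of Corollary \ref{Corq}, and it cannot be recovered a posteriori from $\norm{P_hv_h}^2$ without losing a power of $s$ through $P_{h,2}v_h$ (which carries $s\,\partial_t v_h$ and $s\,\partial_h v_h$). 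This is exactly why the paper invokes the intermediate Lemma \ref{LemDecompo}, whose estimate \eqref{decompo} keeps $\int|P_{h,1}v|^2$ and $\int|P_{h,2}v|^2$ on the left-hand side, in the proof of Theorem \ref{TCWE3}. With that substitution your argument matches the paper's; the remaining points (the choice of $x^0$, $\beta$ and of the cut-off exploiting $T>1$, the vanishing of the $t=0$ boundary terms since $z_h(0)=0$, and the splitting $h\partial_h^+\partial_tw_h=\chi\,h\partial_h^+\partial_{tt}u_h+\chi'\,h\partial_h^+\partial_tu_h$ producing the second observation in \eqref{UniformStability-3}) are all handled as you describe.
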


Before giving the proof of Theorem \ref{TCWE2} at the end of this section, we will begin by a stability theorem for the following inverse source problem, using the discrete Carleman estimate obtained in the previous section:

\begin{theorem}\label{TCWE3}
Let $m>0$, $K>0$, $r>0$, $T>1$.

Let $f_h\in L_h^2(0,1)$, $R_h\in H^1(0,T;L_h^{\infty}(0,1))$ such that
\begin{equation}
	\label{DiscreteSource}
	\norm{R_h}_{H^1(0,T;L_h^{\infty}(0,1)) } \leq K\quad \hbox{ and } \inf_{ j \in \llbracket 1, N\rrbracket} |R_{j,h}(0)|\geq r.
\end{equation}

Let $q_h \in L^\infty_{h,\leq m}(0,1)$ and consider the semi-discrete wave equation
\begin{equation}
	\left\{\begin{array}{ll}
		\label{SDWE2}
			\partial_{tt}u_{j,h} - \left(\Delta_h u_h\right)_j+q_{j,h} u_{j,h}=f_{j,h}R_{j,h}(t), & t\in (0,T), j\in \llbracket 1,N \rrbracket,\\
			u_{0,h}(t)=0, \quad u_{{N+1},h}(t)=0,& t\in (0,T),\\
			u_{j,h}(0)= 0, \quad\partial_t u_{j,h}(0)= 0, & j\in \llbracket 1,N \rrbracket.
	\end{array}\right.
\end{equation}
Then there exists a constant $C=C(T,m,K,r)>0$ independent of $h$ and such that 
\begin{align}
	 & \norm{ \partial_t (\partial_h^-u_h)_{N+1}}_{L^2(0,T)}
	 +  \norm{h\partial_h^+ \partial_{tt} u_h}_{L^2(0,T;L_h^2([0,1)))} 
	  \leq  C \norm{ f_h} _{L_h^2(0,1)},
	 \label{DirectEstimateUnif}
	\\
	&
	\norm{ f_h} _{L_h^2(0,1)}
	\leq
	C \norm{ \partial_t (\partial_h^-u_h)_{N+1}}_{L^2(0,T)}
	 + C \norm{h\partial_h^+ \partial_{tt} u_h}_{L^2(0,T;L_h^2([0,1)))}.
	 \label{ReverseEstimateUniform}
\end{align}
\end{theorem}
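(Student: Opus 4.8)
The statement collects two inequalities of very different natures. Estimate \eqref{DirectEstimateUnif} is a direct energy estimate, whereas \eqref{ReverseEstimateUniform} is the genuinely hard one and should follow from the discrete Carleman estimate of Corollary \ref{Corq} through a discrete version of the Bukhge{\u\i}m--Klibanov method. For the direct part, the plan is to differentiate \eqref{SDWE2} once in time and work with $z_h = \partial_t u_h$, which solves $L_h[q_h] z_h = f_h\,\partial_t R_h$ with $z_h(0)=0$ and, evaluating \eqref{SDWE2} at $t=0$ together with $u_h(0)=0$, with $\partial_t z_h(0) = \partial_{tt} u_h(0) = f_h R_h(0)$. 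Standard discrete energy estimates, uniform in $h$, bound $\norm{\partial_t z_h}_{L^\infty(0,T;L^2_h)}$ and $\norm{\partial_h^+ z_h}_{L^\infty(0,T;L^2_h)}$ by the source and the data at $t=0$, hence by $C(K)\norm{f_h}_{L^2_h}$ via \eqref{DiscreteSource}. Since $\norm{h\partial_h^+\,\cdot\,}$ is bounded by $2$ on $L^2_h$, this controls $\norm{h\partial_h^+\partial_{tt} u_h}$, while a discrete multiplier estimate applied to $z_h$ produces the boundary flux $\norm{\partial_t(\partial_h^- u_h)_{N+1}}_{L^2(0,T)}$, all with constants independent of $h$.

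For the reverse estimate I would first extend $u_h$ and $R_h$ evenly in time to $(-T,T)$, so that the extensions still solve \eqref{SDWE2} and $z_h=\partial_t u_h$ is odd with $z_h(0)=0$ and $\partial_t z_h(0)=f_h R_h(0)$. Choosing an even cut-off $\theta(t)$ with $\theta\equiv 1$ near $t=0$ and $\theta(\pm T)=\partial_t\theta(\pm T)=0$, the function $w_h=\theta z_h$ satisfies the hypotheses of Corollary \ref{Corq}, and $L_h[q_h]w_h = \theta f_h\,\partial_t R_h + 2\theta'\partial_t z_h + \theta'' z_h$. Applying \eqref{CarlemDq} to $w_h$ and expanding $w_h=\theta\,\partial_t u_h$, the flux term of the right-hand side equals $s\int_{-T}^{T} e^{2s\varphi(t,1)}\theta^2|\partial_t(\partial_h^- u_h)_{N+1}|^2$ and the Tychonoff term splits into a genuine contribution $\norm{h\partial_h^+\partial_{tt}u_h}$ and a remainder carrying $\theta'$; once $s$ is fixed the weights are bounded, so these are controlled by the two observed quantities of \eqref{ReverseEstimateUniform} up to error terms supported where $\theta'\neq0$.

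The heart of the argument is to bring $f_h$ into play through a weighted energy identity at $t=0$. Since $\partial_t w_h(0)=f_h R_h(0)$ and $|R_h(0)|\geq r$, it suffices to bound $\int_{(0,1)} e^{2s\varphi(0,\cdot)}|\partial_t w_h(0)|^2$ from above. Writing it as $-\int_0^T\partial_t\big(\int_{(0,1)} e^{2s\varphi}|\partial_t w_h|^2\big)\,dt$ and substituting $\partial_{tt}w_h=\Delta_h w_h-q_h w_h+L_h[q_h]w_h$, the delicate contribution is $\int_{-T}^{T}\int_{(0,1)} e^{2s\varphi}\,\partial_t w_h\,\Delta_h w_h$. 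I would integrate by parts in space using Lemma \ref{LemIPP2} (the boundary terms vanish since $\partial_t w_h$ is zero at $j\in\{0,N+1\}$) and then in time, reducing it to first-order quantities $s\int_{-T}^{T}\int_{(0,1)} e^{2s\varphi}(|\partial_t w_h|^2+|\partial_h^+ w_h|^2)$, which are exactly the left-hand side of \eqref{CarlemDq}. This yields a bound of $\int_{(0,1)} e^{2s\varphi(0,\cdot)}|\partial_t w_h(0)|^2$ by $C\big(\int_{-T}^{T}\int_{(0,1)} e^{2s\varphi}|L_h[q_h]w_h|^2 + \text{flux} + \text{Tychonoff}\big)$. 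Finally, since $f_h$ is time-independent and $\varphi$ is maximal at $t=0$, a Laplace-type estimate $\int_{-T}^{T} e^{2s\varphi(t,x)}\,dt\leq C s^{-1/2} e^{2s\varphi(0,x)}$ gives $\int_{-T}^{T}\int_{(0,1)} e^{2s\varphi}|f_h|^2\leq C s^{-1/2}\int_{(0,1)} e^{2s\varphi(0,\cdot)}|f_h|^2$, so the main source term $\int_{-T}^{T}\int_{(0,1)} e^{2s\varphi}\theta^2|f_h\,\partial_t R_h|^2\leq CK^2\int_{-T}^{T}\int_{(0,1)} e^{2s\varphi}|f_h|^2$ is absorbed into $r^2\int_{(0,1)} e^{2s\varphi(0,\cdot)}|f_h|^2$ for $s$ large. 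The commutator and $\theta'$ remainders are supported near $t=\pm T$, where — and this is precisely where $T>1$ enters — $\varphi$ is strictly below $\min_x\varphi(0,x)$; bounded by the direct energy estimate and the exponentially small weight there, they too are absorbed. Fixing $s$ large and using $e^{2s\varphi(0,\cdot)}\geq1$ then gives \eqref{ReverseEstimateUniform} with a constant independent of $h$.

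The main obstacle I anticipate is this weighted energy identity at $t=0$ carried out in the discrete setting: one must arrange the discrete integrations by parts so that only first-order terms controllable by \eqref{CarlemDq} and the genuine observations survive — in particular no uncontrolled $\Delta_h w_h$ contribution should remain — while keeping every constant uniform in $h$, which forces one to track the $\mathcal O_\lambda(sh)$ errors and to remain in the admissible range $sh\leq\varepsilon$ throughout.
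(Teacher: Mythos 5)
Your architecture coincides with the paper's: \eqref{DirectEstimateUnif} via $z_h=\partial_t u_h$, uniform discrete energy and multiplier estimates; \eqref{ReverseEstimateUniform} via extension in time, truncation, the discrete Carleman estimate, and a Bukhge\u{\i}m--Klibanov identity at $t=0$ converting $\partial_t w_h(0)=f_hR_h(0)$ and $|R_h(0)|\geq r$ into a lower bound $r^2\int_{(0,1)}e^{2s\varphi(0,\cdot)}|f_h|^2$. The direct part is fine, and so is the skeleton of the reverse part.

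The gap is in the final absorption step. Your weighted energy identity yields $r^2\int_{(0,1)}e^{2s\varphi(0,\cdot)}|f_h|^2\leq C\bigl(\int_{-T}^T\int_{(0,1)}e^{2s\varphi}|L_h[q_h]w_h|^2+\hbox{flux}+\hbox{Tychonoff}\bigr)$ with $C$ \emph{independent of $s$ but without any gain in $s$}: the intermediate quantities $s\int\int e^{2s\varphi}(|\partial_t w_h|^2+|\partial_h^+w_h|^2)$ are controlled by the right-hand side of \eqref{CarlemDq} only with an $s$-independent constant. The dominant source contribution is $\int\int e^{2s\varphi}|f_h|^2|\partial_t R_h|^2$. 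Your Laplace-type estimate $\int_{-T}^Te^{2s\varphi(t,x)}\,dt\leq Cs^{-1/2}e^{2s\varphi(0,x)}$ can only be applied after pulling $|\partial_t R_h|^2$ out of the time integral in $L^\infty$; but \eqref{DiscreteSource} only gives $\partial_t R_h\in L^2(0,T;L_h^\infty(0,1))$, and $|\partial_t R_h|^2$ may concentrate near $t=0$, precisely where $e^{2s\varphi}$ is maximal. The only available bound is $\int\int e^{2s\varphi}|f_h\partial_tR_h|^2\leq 2K^2\int_{(0,1)}e^{2s\varphi(0,\cdot)}|f_h|^2$, with no decay in $s$, and then $r^2X\leq CK^2X+\hbox{(observations)}$ cannot be closed when $K$ is large compared to $r$.

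The paper's device, which you need here, is different: set $v_h=e^{s\varphi}w_h$ and compute $\int_{-T}^0\int_{(0,1)}P_{h,1}v_h\,\partial_t v_h$, which produces $\tfrac12\int_{(0,1)}|\partial_t v_h(0)|^2$ exactly; then bound $\bigl|\int\int P_{h,1}v_h\,\partial_t v_h\bigr|\leq\tfrac{1}{2\sqrt s}\int\int|P_{h,1}v_h|^2+\tfrac{\sqrt s}{2}\int\int|\partial_t v_h|^2$. Both terms are $s^{-1/2}$ times quantities that sit on the \emph{left-hand side} of the refined estimate \eqref{decompo} of Lemma \ref{LemDecompo} --- in particular $\int\int|P_{h,1}v_h|^2$ itself, which Corollary \ref{Corq} as you invoke it does not record. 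This places a factor $\sqrt s$ in front of $\int_{(0,1)}e^{2s\varphi(0,\cdot)}|f_h|^2$, after which the $CK^2\int_{(0,1)}e^{2s\varphi(0,\cdot)}|f_h|^2$ coming from the source is absorbed by fixing $s_*$ with $\sqrt{s_*}\,r^2>2CK^2$ (and $h<\varepsilon/s_*$). Without this gain --- or some substitute for it --- your argument does not conclude.
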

Theorem \ref{TCWE2} will then be a simple consequence of Theorem \ref{TCWE3}, see its proof in Section \ref{ProofThmTCWE2}.

\subsection{Stability for the inverse source problem}

Before going into the proof of Theorem \ref{TCWE3}, we first recall some counterparts of the classical energy estimates for the solutions of the continuous wave equation in the context of the semi-discrete wave equation:

\begin{lemma}\label{LemEnergy}
Let $ m >0$ and $q_h \in L^\infty_{h, \leq m} (0,1)$.

Let $g_h \in L^1(0,T;L_h^2(0,1))$ and $(z_h^0, z_h^1)$ be discrete functions and let  $z_h$ be the solution of 
$$
	\left\{\begin{array}{ll}
			\partial_{tt}z_{j,h} - \left(\Delta_h z_h\right)_j+q_{j,h} z_{j,h}=g_{j,h}(t), & t\in (0,T), j\in \llbracket 1,N \rrbracket,\\
			z_{0,h}(t)=0, \quad z_{{N+1},h}(t)=0,& t\in (0,T),\\
			z_{j,h}(0)= z_{j,h}^0, \quad\partial_t z_{j,h}(0)= z_{j,h}^1, & j\in \llbracket 1,N \rrbracket.
	\end{array}\right.
$$
Then, setting
\begin{equation}
	\label{DefEnergy-H}
	E^z_h(t) = \norm{\partial_h^+ z_h(t)}^2_{L_h^2([0,1))} +  \norm{\partial_t z_h(t)}^2_{L_h^2(0,1)} +  \norm{z_h(t)}^2_{L^2_h(0,1)},
\end{equation}
there exists a constant $C = C(T,m)>0$ independent of $h$ and such  for all $t\in(0,T)$,
\begin{equation}
	\label{Energy-Uniform}
 	E^z_h(t) \leq C\left(\norm{\partial_h^+ z^0_h}^2_{L_h^2([0,1))} +  \norm{z^1_h}^2_{L_h^2(0,1)} + \norm{g_h}^2_{L^1(0,T;L^2_h(0,1))}\right).
\end{equation}
We also have the following ``hidden regularity'' property:
\begin{equation}
	\label{HiddenRegularityUniform}
	\norm{ (\partial_h^- z_h)_{N+1} }_{L^2(0,T)}^2 
	\leq C \left(\norm{\partial_h^+ z^0_h}^2_{L_h^2([0,1))} +  \norm{z^1_h}^2_{L_h^2(0,1)} + \norm{g_h}^2_{L^1(0,T;L^2_h(0,1))}\right).
\end{equation}
\end{lemma}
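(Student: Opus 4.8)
The plan is to prove \eqref{Energy-Uniform} by the standard energy method adapted to the semi-discrete setting, and \eqref{HiddenRegularityUniform} by a discrete multiplier argument mimicking the continuous multiplier $x \partial_x z$. I would first establish the energy identity: multiplying the equation by $\partial_t z_{j,h}$, summing over $j$ with the notation \eqref{intf}, and using the discrete integration by parts \eqref{IPP1} — whose boundary terms vanish since $(\partial_t z_h)_0 = (\partial_t z_h)_{N+1} = 0$ — gives
\[
	\frac12 \frac{d}{dt}\Big( \norm{\partial_t z_h}^2_{L_h^2(0,1)} + \norm{\partial_h^+ z_h}^2_{L_h^2([0,1))}\Big) = \int_{(0,1)} g_h \partial_t z_h - \int_{(0,1)} q_h z_h \partial_t z_h .
\]
Bounding the right-hand side by $\big(\norm{g_h}_{L_h^2(0,1)} + m \norm{z_h}_{L_h^2(0,1)}\big) \norm{\partial_t z_h}_{L_h^2(0,1)}$ and controlling $\norm{z_h}_{L_h^2(0,1)}$ through a uniform discrete Poincar\'e inequality (available because $z_{0,h}=0$) by $C\norm{\partial_h^+ z_h}_{L_h^2([0,1))}$, a Gronwall argument applied to the square root of the kinetic-plus-gradient energy yields \eqref{Energy-Uniform}. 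All structural constants (integration by parts, Poincar\'e, the bound $m$ on $q_h$) are independent of $h$, whence the uniformity of $C$.

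The heart of the matter is \eqref{HiddenRegularityUniform}. I would test the equation against the discrete multiplier $x_j (\partial_h z_h)_j$ and integrate over $(0,T)$. The Laplacian term is handled by \eqref{IPP2} with $g = x$, for which $\partial_h^+ x = 1$, $x_0 = 0$ and $x_{N+1} = 1$, producing exactly the desired boundary term $\tfrac12 \int_{-T}^{T}\!|(\partial_h^- z_h)_{N+1}|^2$ together with a harmless gradient contribution $-\tfrac12 \int \int_{[0,1)} |\partial_h^+ z_h|^2$. The second-time-derivative term is integrated by parts in time: its interior piece is treated with \eqref{IPP0} (applied to $v = \partial_t z_h$, $g = x$), which gives $\tfrac12 \int\int_{(0,1)} |\partial_t z_h|^2$ plus the purely discrete artifact $-\tfrac{h^2}{4}\int\int_{[0,1)}|\partial_h^+ \partial_t z_h|^2$, while the endpoint-in-time contribution $\big[\int_{(0,1)} \partial_t z_h \, x \, \partial_h z_h\big]_0^T$ is bounded by the energy at $t = 0$ and $t = T$.

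The potential term $\int\int_{(0,1)} q_h z_h \, x \, \partial_h z_h$ I would \emph{not} integrate by parts — since $q_h$ is merely $L^\infty$, its discrete derivative is not controlled — but simply estimate by $m \int \norm{z_h}_{L_h^2(0,1)} \norm{\partial_h z_h}_{L_h^2(0,1)}$, and the source term likewise by $\int \norm{g_h}_{L_h^2(0,1)} \norm{\partial_h z_h}_{L_h^2(0,1)}$, using $\norm{\partial_h z_h}_{L_h^2(0,1)} \leq \norm{\partial_h^+ z_h}_{L_h^2([0,1))}$. Solving for the boundary term, every remaining contribution is bounded by energy-type quantities $\int_0^T E_h^z$, hence by the right-hand side of \eqref{HiddenRegularityUniform} via \eqref{Energy-Uniform}. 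The one nonstandard point, which I expect to be the main obstacle, is controlling the genuinely discrete artifact $\tfrac{h^2}{4}\int\int_{[0,1)}|\partial_h^+ \partial_t z_h|^2$: I would absorb it by noting that $h\partial_h^+$ has operator norm at most $2$ on $L_h^2$, so this term is $\leq \int_0^T \norm{\partial_t z_h}^2_{L_h^2(0,1)} \leq C \int_0^T E_h^z$, again controlled by \eqref{Energy-Uniform}. Verifying that all such terms produced by the integration-by-parts formulas of Lemma \ref{LemIPP2} carry the right powers of $h$ to stay harmless, and that the multiplier $x \, \partial_h z_h$ interacts cleanly with the non-commuting averaging and difference operators, is the delicate bookkeeping; once done, combining with \eqref{Energy-Uniform} yields \eqref{HiddenRegularityUniform}.
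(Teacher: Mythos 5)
Your proposal is correct and follows essentially the same route as the paper: a Gronwall argument on the (square root of the) discrete energy combined with the uniform discrete Poincar\'e inequality for \eqref{Energy-Uniform}, and the discrete multiplier $x_j(\partial_h z_h)_j$ together with the integration-by-parts identities \eqref{IPP0} and \eqref{IPP2} for \eqref{HiddenRegularityUniform}, treating the potential and source terms by direct energy bounds rather than further integration by parts. The $h^2$ artifact you single out is handled the same way in the paper (there it is packaged as the cross term $h\sum_j\int\partial_t z_{j,h}\,\partial_t z_{j+1,h}$, bounded by the energy via Cauchy--Schwarz), so your absorption argument is equivalent.
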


\begin{proof}
	The proof is somewhat classical, except perhaps for \eqref{HiddenRegularityUniform}. The quantity $E^z_h$ is usually called the discrete energy of the solution $z_h$. We sketch it for the convenience of the reader since it will be useful in the sequel.
	
	Differentiating $E_h^z$ with respect to the time $t$, we obtain
	\begin{align*}
		\frac{dE_h^z}{dt} (t)
		&= 2 \int_{(0,1)} \left( 	\partial_{tt}z_h - \Delta_h z_h+ z_h \right) \partial_t z_h
		\\
		& \leq 2 \int_{(0,1)} |g_h(t) \partial_t z_h| + 2 (m+1) \int_{(0,1)} |z_h \partial_t z_h|
		\\
		& \leq 2 \left(\int_{(0,1)} |g_h(t)|^2\right)^{1/2} \sqrt{E_h^z(t)} + (m+1) E_h^z(t).
	\end{align*}
	Therefore, 
	$$
		\frac{d \sqrt{E_h^z}}{dt} \leq \left(\int_{(0,1)} |g_h(t)|^2\right)^{1/2}+ \frac{(m+1)}2 \sqrt{E_h^z}
	$$
	and Gronwall's estimate then yields a constant $C(T, m)$ such that for all $t \in(0,T)$, 
	\begin{equation}
	\label{Energy-Uniform-0}
		E_h ^z(t) \leq C(E_h^z(0) + \norm{g_h}^2_{L^1(0,T;L^2_h(0,1))}),
	\end{equation}
which implies \eqref{Energy-Uniform} providing a discrete Poincar\'e estimate proved hereafter:
	\begin{align*}
		\int_{(0,1)} |z_h|^2 & = h \sum_{j= 1}^N \left(h \sum_{k =0}^{j-1} \partial_h^+ (|z_h|^2)_k\right)
		\\
		& \leq 2 h \sum_{j=1}^N \left(\int_{[0,1)} |\partial_h^+ z_h|^2\right)^{1/2} \left(\int_{[0,1)} |m_h^+ z_h|^2\right)^{1/2}
		\\
		& \leq 2 \left(\int_{[0,1)}  |\partial_h^+ z_h|^2\right)^{1/2}  \left(\int_{(0,1)}  | z_h|^2\right)^{1/2},  
	\end{align*}
	which implies
	\begin{equation}
		\label{Poincare}
				\int_{(0,1)} |z_h|^2 \leq 4\int_{[0,1)}  |\partial_h^+ z_h|^2.
	\end{equation}
	Therefore \eqref{Energy-Uniform-0} implies \eqref{Energy-Uniform}.

	Finally, to prove \eqref{HiddenRegularityUniform}, we use a multiplier type argument. Multiplying the equation of $z_h$ by $j(z_{j+1,h}- z_{j-1,h})$ (which is a discrete version of $x\partial_x z$), summing in $j$ and integrating in time, we get (cf \cite[Lemma 2.2]{InfZua} or the proof of \eqref{MultiplierDiscrete} given hereafter in a more intricate case):
	\begin{multline}
		\label{MultiplierIdentity}
		h \sum_{j =0}^N\int_0^T \partial_t z_{j,h} \partial_t z_{j+1,h} + \int_0^T \int_{[0,1)} |\partial_h^+ z_h|^2 + X_h(t)\Big|_0^T \\
		- 2 \int_0^T \int_{(0,1)} g_h x \partial_h z_h = \int_0^T \left| (\partial_h^- z_h)_{N+1}\right|^2
	\end{multline}	
	where 
	$$
		X_h(t) = 2 \int_{(0,1)} x \partial_h z_h(t) \partial_t z_h(t).
	$$
	Of course, since each term in \eqref{MultiplierIdentity} is easily bounded by $\sup_{[0,T]}E_h^z(t)$ except for the term involving $g_h$ which can be bounded by $\norm{g_h}_{L^1(L^2_h)} \sup_{[0,T]}\sqrt{E_h^z(t)}$, we immediately obtain \eqref{HiddenRegularityUniform} from \eqref{Energy-Uniform}.
\end{proof}

\begin{proof}[Proof of Theorem \ref{TCWE3}.]

\noindent {\bf Step 1. Energy estimates.} 
Set $z_h=\partial_t u_h$. Then, using the notation $L_h[q_h] =\partial_{tt} - \Delta_h  + q_h ~$, $z_h$ satisfies
\begin{equation}
	\left\{\begin{array}{ll}
		\label{SDWE3}
			L_h[q_h]z_h =f_h \partial_t R_h, & \qquad t\in (0,T),\\
			z_{0,h}(t)=z_{N+1,h}(t)=0,& \qquad t\in (0,T),\\
			z_h(0)= 0, \quad\partial_t z_h(0)= f_h R_h(0). &
	\end{array}\right.
\end{equation}
We can apply Lemma \ref{LemEnergy} to $z_h$ solution of \eqref{SDWE3} since $\partial_t R_h$ belongs to 
$ L^1(0,T;L_h^{2}(0,1))$  and $f_h\in L_h^2(0,1)$.
In particular, if $E_h^z$ denotes the energy of $z_h$ (see \eqref{DefEnergy-H}), we obtain, for all $t \in (0,T)$,
\begin{align}
		E_h^z (t)  
		& \leq   C \left(  \norm{f_h R_h(0)}^2_{L^2_h(0,1)}  +  \norm{ f_h\partial_t R_h}^2_{L^1(0,T;L^2_h(0,1))}\right) 
		\nonumber\\
		& \leq C\norm{f_h}^2_{L^2_h(0,1)}  \left( \norm{R_h(0)}^2_{L_h^{\infty}(0,1)} + \norm{R_h}^2_{H^1(0,T;L_h^{\infty}(0,1))}\right) 
		 \leq C K^2 \norm{f_h}^2_{L^2_h(0,1)},
		\label{estimnrj}
\end{align}
where we have used that $H^1(0,T; L_h^\infty(0,1))$ embeds into $C([0,T]; L_h^\infty(0,1))$.

Moreover, Lemma \ref{LemEnergy} also yields \eqref{DirectEstimateUnif}. Indeed, estimate \eqref{HiddenRegularityUniform} becomes here
$$
	\norm{(\partial_h^- z_h)_{N+1} }^2_{L^2(0,T)} + \norm{ \partial_t z_h}_{L^\infty(0,T; L^2_h(0,1))}^2 \leq C K^2 \norm{f_h}^2_{L^2_h(0,1)},
$$
but $z_h=\partial_t u_h$ and the operator $h \partial_h^+ $ is bounded uniformly in $h$. \\

\noindent {\bf Step 2. The choice of the Carleman weight.} Since we assumed $T>1$, there exists $x^0 <0$ such that
$$
	T> \ds\sup_{x\in(0,1)}|x-x^0| \quad \Big(= 1 + |x^0|\Big).
$$
Therefore, we can choose $\beta\in(0,1)$ and $\eta > 0$ such that the Carleman weight function $\psi = \psi(t,x)=|x-x^0|^2-\beta t^2 +C_0$ satisfies
$$
	\left\{
		\begin{array}{l}
			\ds \psi(0,x)\geq   C_0, \quad x\in(0,1),
			\\
			\ds \psi(t,x) \leq C_0, \quad t \in [-T,-T+\eta] \cup [T-\eta,T],\  x \in (0,1).
		\end{array}
	\right.
$$
In particular,
\begin{equation}
	\label{phi}
		\left\{
			\begin{array}{l}
			\ds    \varphi(0,x) \geq e^{\lambda C_0}, \quad x \in (0,1),
		\\
			\ds  \varphi(t,x)  \leq e^{\lambda C_0} ,\quad t \in [-T,-T+\eta] \cup [T-\eta,T],\  x \in (0,1).
			\end{array}
		\right.
\end{equation}
In the sequel, we fix $\beta$ as above ($\beta \in (0,1)$ and $ T \sqrt{\beta}< \sup_{x\in(0,1)}|x-x^0|$), $\lambda$, $s_0$, $\varepsilon>0$ such that Corollary \ref{Corq} holds and the Carleman estimate \eqref{CarlemDq} holds for all $h \in (0,h_0)$ and $s \in (s_0, \varepsilon/h)$.\\

\noindent {\bf Step 3. Extension and truncation.}
We now extend the problem \eqref{SDWE3} on $(-T,T)$, setting $z_h(t)=z_h(-t)$ for all $t \in (-T, 0)$. We also extend $\partial_tR_h$ in an even way and keep the same notations for the new problem. 

Let us define the cut-off function $\chi\in C^{\infty}(\mathbb{R};[0,1])$ such that:
\begin{equation}
	\label{chi}
		\left\lbrace\begin{array}{ll}
			\chi(\pm T) = \partial_t \chi (\pm T) = 0
			\\
			\chi(t) =1 \quad \hbox{ for all } t\in [-T+\eta,T-\eta].
		\end{array} \right.
\end{equation}
We set $~w_h=\chi z_h~$ that satisfies the following equation:
\begin{equation}
	\left\{\begin{array}{ll}
		\label{SDWE4}
			L_h[q_h] w_h=\partial_{tt} \chi z_h + 2 \partial_t \chi \partial_t z_h + \chi f_h \partial_t R_h, \quad & t \in(-T,T),\\
			w_{0,h}(t)= w_{N+1,h}(t)=0,& t\in (-T,T),\\
			w_h(0)= 0, \quad\partial_t w_h(0)= f_h R_h(0), & \\
			w_h(\pm T)= 0, \quad\partial_t w_h(\pm T)= 0. &
	\end{array}\right.
\end{equation}

\noindent {\bf Step 4. Using the Carleman estimate.}
From now on, $C>0$ will correspond to a generic constant depending on $s_0, \lambda,T,x^0,\beta,\chi$ and $\eta$ 
but independent of $h \in (0,h_0)$ and $s\in(s_0, \varepsilon/h)$.
We use the same notations as in Section \ref{DCE} and set $v_h = \exp(s \varphi) w_h$. We then have (recall \eqref{P1})
$$	
	P_{h,1}v_h=\partial_{tt}v_h - (1+A_0) \Delta_h v_h +  s^2\la^2 \left[\varphi^2 \left( \partial_t \psi \right)^2 - A_2\right]v_h
$$
and $v_h(\pm T)  =  \partial_t v_h(\pm T) = 0 $, $ v_{h}(0)  =  0$ and $\partial_t v_h(0) = f_h R_h(0) e^{ s \varphi(0, \cdot)}$.

Using the properties of $v_h$, Lemma \ref{AA} and \eqref{A0}, we can make the following calculation:
\begin{align*}
	\lefteqn{
	\int_{-T}^0\int_{(0,1)} P_{h,1}v_h~ \partial_t v_h 
	}
	\\
	 = &
	  \int_{-T}^0\int_{(0,1)} \left(\partial_{tt}v_h - (1+A_0) \Delta_h v_h  +  s^2\la^2 \left[\varphi^2 (\partial_t \psi)^2 - A_2\right]v_h\right) \partial_t v_h 
	 \\
	= & 
	~\dfrac 12\int_{(0,1)} |\partial_t v_h(0)|^2 
	+ \int_{-T}^0 \int_{[0,1)} \partial_h^+ v_h \partial_h^+ ((1+A_0) \partial_t v_h) 
	\\
	&
	\qquad - \dfrac {s^2\lambda^2}2  \int_{-T}^0\int_{(0,1)}   |v_h|^2\partial_t\left(\varphi^2 \left( \partial_t \psi \right)^2 - A_2\right)
	\\
	 \geq & 
	 ~\dfrac 12\int_{(0,1)} | f_h|^2 |R_h(0)|^2 e^{2 s \varphi(0, \cdot)} 
	- s^2 C  \int_{-T}^0\int_{(0,1)}  |v_h|^2
	\\
	& 
	\qquad
	+ \int_{-T}^0 \int_{[0,1)}\left(\frac{1}{2} \partial_t(|\partial_h^+ v_h|^2) m_h^+ (1+A_0) + \partial_h^+ A_0 \partial_h^+ v_h m_h^+ \partial_t v_h\right)
	\\
	 \geq & 
	 ~\frac{r^2}{2}\int_{(0,1)} |f_h|^2 e^{2 s \varphi(0, \cdot)} 
	- s^2 C  \int_{-T}^0\int_{(0,1)}  |v|^2
	\\
	& 
	\qquad -\frac{1}{2} \int_{-T}^0 \int_{[0,1)} |\partial_h^+ v_h|^2 m_h^+ (\partial_t A_0)+ \int_{-T}^0 \int_{[0,1)}\partial_h^+ A_0 \partial_h^+ v_h m_h^+ \partial_t v_h
	\\
	\geq & 
	~ \frac{r^2}{2}\int_{(0,1)} |f_h|^2 e^{2 s \varphi(0, \cdot)} 
	- s^2 C  \int_{-T}^0\int_{(0,1)}  |v|^2
	\\
	& 
	\qquad- \mathcal{O}_\lambda(sh) \left(  \int_{-T}^0 \int_{[0,1)} |\partial_h^+ v_h|^2 +  \int_{-T}^0 \int_{(0,1)} |\partial_t v_h|^2\right).
\end{align*}
Therefore
\begin{eqnarray*}
	 \frac{r^2}{2}\int_{(0,1)} |f_h|^2 e^{2 s \varphi(0, \cdot)} 
		 & \leq & 
	\int_{-T}^T\int_{(0,1)} P_{h,1}v_h~ \partial_t v_h 
	+
	C s^2\int_{-T}^T\int_{(0,1)}  |v_h|^2 
	\\
		& & 
	+ \mathcal{O}_\lambda(sh) \left(  \int_{-T}^T \int_{[0,1)} |\partial_h^+ v_h|^2 +  \int_{-T}^T \int_{(0,1)} |\partial_t v_h|^2\right).
\end{eqnarray*}
Using 
$$
	\left|\int_{-T}^T\int_{(0,1)} P_{h,1}v_h~ \partial_t v_h \right| 
	\leq \frac{1}{2\sqrt{s}} \left(  \int_{-T}^T\int_{(0,1)} |P_{h,1}v_h|^2 \right.
	+\left. s \int_{-T}^T\int_{(0,1)} |\partial_t v_h|^2\right)
$$
and the fact that $\mathcal{O}_\lambda(sh)$ is bounded by some constant independent of $s$ since $sh \leq \varepsilon$, we get
\begin{multline*}	
	\lefteqn{	
	 r^2\sqrt{s} \int_{(0,1)} |f_h|^2 e^{2 s \varphi(0, \cdot)} 
		\leq
	 \int_{-T}^T\int_{(0,1)} |P_{h,1}v_h|^2+ s \int_{-T}^T\int_{(0,1)} |\partial_t v_h|^2
	}\\
	+ C s^{5/2} \int_{-T}^T\int_{(0,1)}  |v_h|^2  
	+ C\sqrt{s} \left(  \int_{-T}^T \int_{[0,1)} |\partial_h^+ v_h|^2 +  \int_{-T}^T \int_{(0,1)} |\partial_t v_h|^2\right).
\end{multline*}
From the Carleman estimate \eqref{decompo} of Lemma~\ref{LemDecompo}, this implies that for all $s$ satisfying $s_0<s<\dfrac \varepsilon h$, 
\begin{eqnarray}
	r^2 \sqrt{s} \int_{(0,1)} |f_h|^2 e^{2 s\varphi(0, \cdot)} 
		&\leq &
	M\int_{-T}^{T} \int_{(0,1)} |P_h v_h |^2 	+M s \int_{-T}^{T}  \left|(\partial_h^-v_h)_{N+1} \right|^2 
	\nonumber\\ 
		&&
	+ M s \int_{-T}^{T} \int_{[0,1)} |h \partial_h^+\partial_t v_h|^2 
	\nonumber\\
		& \leq &
	M \int_{-T}^{T} \int_{(0,1)} e^{2s\varphi} |L_h w_h |^2  
	+
	M s \int_{-T}^{T} e^{2s\varphi(t,1)} \left| (\partial_h^-w_h)_{N+1} \right|^2 
	\nonumber\\
		& &
	+ M s \int_{-T}^{T} \int_{[0,1)}e^{2s\varphi} |h \partial_h^+\partial_t w_h|^2,
	\label{Carestim} 
\end{eqnarray}
where the last estimate follows from \eqref{BoundaryTerms}--\eqref{TycV-W}.\\

From equation \eqref{SDWE4}, the properties \eqref{chi} of the cut-off function $\chi$ and \eqref{phi} of the weight function $\varphi$, which is a decaying function of $|t|$, one gets
\begin{eqnarray*}
	\lefteqn{
	\int_{-T}^{T} \int_{(0,1)} e^{2s\varphi} |L_h w_h |^2  
	 \leq 
	C \int_{-T}^T\int_{(0,1)} e^{2s\varphi} \left( |\chi f_h \partial_t R_h|^2 + |\partial_t \chi \partial_t z_h|^2 + |\partial_{tt} \chi z_h|^2\right) 
	}\\
		 & \leq &  
	C \int_{-T}^T\int_{(0,1)} e^{2s\varphi}| f_h|^2 | \partial_t R_h|^2 
	+ C \left( \int_{-T}^{-T+\eta} + \int_{T-\eta}^T \right)\int_{(0,1)} e^{2s\varphi}\left( |\partial_t z_h|^2 + |z_h|^2\right)
	\\
		& \leq &  
	C K^2 \int_{(0,1)} e^{2s\varphi(0, \cdot)}| f_h|^2
	+ C e^{2s e^{\lambda C_0}} \left( \int_{-T}^{-T+\eta} + \int_{T-\eta}^T \right)E^z_h (t).
	\end{eqnarray*}
Using now the energy estimate \eqref{estimnrj},
\begin{eqnarray}
	\int_{-T}^{T} \int_{(0,1)} e^{2s\varphi} |L_hw_h |^2  
	& \leq &
	C K^2 \int_{(0,1)} e^{2s\varphi(0, \cdot)}| f_h|^2 +C K^2 e^{2s e^{\lambda C_0} }  \int_{(0,1)} | f_h|^2
	\nonumber
	\\
		&
		\leq &
	C K^2 \int_{(0,1)} e^{2s\varphi(0, \cdot)}| f_h|^2. 
	\label{EstimeesLW}
\end{eqnarray}

Similarly, since $\partial_t w_h = \chi \partial_t z_h + \partial_t \chi z_h$, using the energy estimate \eqref{estimnrj},
\begin{eqnarray}
	\lefteqn{\int_{-T}^{T} \int_{[0,1)}e^{2s\varphi} |h \partial_h^+\partial_t w_h|^2}
	\nonumber\\
		& \leq &
	2 \int_{-T}^{T} \int_{[0,1)}e^{2s\varphi} \chi^2 |h \partial_h^+\partial_t z_h|^2
		+
	2 h^2 \int_{-T}^T \int_{[0,1)} e^{2 s \varphi} |\partial_t \chi|^2 |\partial_h^+ z_h|^2
	\nonumber\\
		& \leq &
	2 \int_{-T}^{T} \int_{[0,1)}e^{2s\varphi} \chi^2 |h \partial_h^+\partial_t z_h|^2
		+
	2 h^2 CK^2  \int_{(0,1)} e^{2s\varphi(0, \cdot)}| f_h|^2, 
	\label{EstTycTerm}
\end{eqnarray}
where we have used that, as proved above,
$$
	\int_{-T}^T \int_{[0,1)} e^{2 s \varphi} |\partial_t \chi|^2 |\partial_h^+ z_h|^2 \leq C e^{2s e^{\lambda C_0}} \left( \int_{-T}^{-T+\eta} + \int_{T-\eta}^T \right)E^z_h (t) \leq 	C K^2 \int_{(0,1)} e^{2s\varphi(0, \cdot)}| f_h|^2.
$$

Therefore, plugging \eqref{EstimeesLW}--\eqref{EstTycTerm} in \eqref{Carestim} we obtain
\begin{multline*}
	\sqrt{s} r^2 \int_{(0,1)} e^{2s\varphi(0, \cdot)}|f_h|^2 
		\leq
	 C K^2 \int_{(0,1)} e^{2s\varphi(0, \cdot)}| f_h|^2  
	+C s \int_{-T}^{T} e^{2s\varphi(t,1)}\chi^2 \left|( \partial_h^-z_h)_{N+1} \right|^2	
		\\
	+ C s \int_{-T}^{T} \int_{[0,1)} e^{2s\varphi}|h \partial_h^+\partial_t z_h|^2 + C s h^2  K^2  \int_{(0,1)} e^{2s\varphi(0, \cdot)}| f_h|^2.
\end{multline*}
Thus, since $sh^2 \leq \varepsilon(h) h_0\leq 1$, taking $s_*>s_0$ such that for all $s\geq s_*$, $\sqrt{s} r^2  - 2C K^2 >0$, for all $h \in(0,h_*)$ with $h_* = \min\{ h_0, \varepsilon/s_*\}$, we obtain
$$
		\int_{(0,1)}e^{2s_*\varphi(0, \cdot)} |f_h|^2 
	\leq C s_* \int_{-T}^{T} e^{2s_*\varphi(t,1)}\chi^2 \left|( \partial_h^-z_h)_{N+1} \right|^2	
	+ C s_* \int_{-T}^{T} \int_{[0,1)} e^{2s_*\varphi}|h \partial_h^+\partial_t z_h|^2,
$$
and therefore
\begin{equation}
	\label{StabilityEstimateInZ}
	\norm{f_h}_{L^2_h(0,1)} \leq C \norm{(\partial_h^-z_h)_{N+1}}_{L^2(-T,T)} + C \norm{h \partial_h^+\partial_t z_h}_{L^2(-T,T;L^2_h(0,1))},
\end{equation}
which coincides with \eqref{ReverseEstimateUniform}. The proof of Theorem~\ref{TCWE3} is then complete.
\end{proof}

\begin{remark}\label{RemarkFiltered}
	With the notations of Theorem \ref{TCWE3}, if $R_h \in H^1(0,T; L^\infty_h(0,1))\cap W^{2,1} (0,T; L^2_h(0,1))$ and $R_h(0,\cdot), \partial_t R_h(0, \cdot) \in L^\infty_{h} (0,1)$, considering the equation satisfied by $w_h = \partial_t z_h$:
	$$
	\left\{\begin{array}{ll}
		\partial_{tt} w_h - \Delta_h w_h+ q_h w_h = f_h \partial_{tt}R_h, \quad 
			&	 t\in (0,T), \, j\in \llbracket 1,N \rrbracket,\\
			w_{0,h}(t)= w_{N+1,h}(t)=0,
			&  t\in (0,T),\\
			w_h(0)= f_h R_h(0), \quad\partial_t w_h(0)= f_h \partial_{t}R_h(0), &
	\end{array}\right.
	$$
	from Lemma \ref{LemEnergy}, we get
	$$
		\sup_{t \in (0,T)} E_h^w (t) \leq C E_h^w (0) + C \norm{f_h}_{L^2_h(0,1)}^2 \norm{R_h}_{ W^{2,1} (0,T; L^2_h(0,1))}^2, 
	$$
	with
	\begin{align*}
		E_h^w (0) 
		&= \int_{[0,1)} |\partial_h^+( f_h R_h(0))|^2 + \int_{(0,1)} |f_h \partial_t R_h(0)|^2 + \int_{(0,1)} |f_h R_h(0)|^2 
		\\
		& \leq 2 \int_{[0,1)} |\partial_h^+( f_h)|^2 |m_h^+ R_h(0)|^2+ C \norm{f_h}_{L^2_h(0,1)}^2\norm{(R_h(0,\cdot), \partial_t R_h(0, \cdot))}_{L^\infty_h(0,1)^2}^2. 
	\end{align*}
	Therefore, if in addition to \eqref{DiscreteSource}, there exists a constant $\delta >0$  such that for all $h>0$,
	\begin{equation}
		\label{FilteringCondition}
		\begin{array}{l}
			\ds \norm{R_h}_{ W^{2,1} (0,T; L^2_h(0,1))}+ \norm{(R_h(0,\cdot), \partial_t R_h(0, \cdot))}_{L^\infty_h(0,1)^2} \leq \frac{\delta}{h}, 
		\\
		\ds 	 \int_{[0,1)} |\partial_h^+ f_h|^2  \leq \frac{\delta^2}{h^{2}} \int_{(0,1)} |f_h|^2,
		\end{array}
	\end{equation}
	then
	$$
		\norm{\partial_h^+ w_h }_{L^\infty(0,T;L^2_h[0,1))}  = \norm{\partial_h^+ \partial_t z_h }_{L^\infty(0,T;L^2_h[0,1))} \leq C \delta h^{- 1} \norm{f_h}_{L^2_h(0,1)},
	$$
	and in particular,
	$$
		h\norm{\partial_h^+ \partial_t z_h }_{L^\infty(0,T;L^2_h[0,1))} \leq C \delta \norm{f_h}_{L^2_h(0,1)}.
	$$
	Therefore, if condition \eqref{FilteringCondition} is satisfied for $\delta>0$ small enough, estimate \eqref{StabilityEstimateInZ} simply becomes, for $h$ small enough,
	$$
		\norm{f_h}_{L^2_h(0,1)} \leq C \norm{(\partial_h^-z_h)_{N+1}}_{L^2(-T,T)}.
	$$
	Condition \eqref{FilteringCondition} can be seen as a filtering condition on the data. To be more precise, if we filter enough the data (at the scale $\delta/h$ with  $\delta$ small enough ), the Tychonoff regularization term is not needed anymore in \eqref{ReverseEstimateUniform}.
\end{remark}

\subsection{Uniform stability for the discrete inverse problem}\label{ProofThmTCWE2}
\begin{proof}[Proof of Theorem \ref{TCWE2}]
	Setting $ u_h = y_h[q_h] - y_h[p_h]$, where $y_h[q_h]$ and $y_h[p_h]$ are respectively  the solutions of \eqref{SDWE1} corresponding to $p_h$ and $q_h$, then $u_h$ solves
\begin{equation}
	\left\{\begin{array}{ll}\label{SDWE5}
		\partial_{tt}  u_h - \Delta_h u_h+ q_h u_h = f_h R_h, \quad 
			&	 t\in (0,T), \, j\in \llbracket 1,N \rrbracket,\\
		u_{0,h}(t)= u_{N+1,h}(t)=0,
			&  t\in (0,T),\\
		u_h(0)= 0, \quad\partial_t u_h(0)= 0, &
	\end{array}\right.
\end{equation}
	with $f_h = p_h - q_h$ and $R_h = y_h[p_h]$. We then directly apply Theorem \ref{TCWE3}.
\end{proof}

\begin{remark}\label{RemFiltering}
	Remark \ref{RemarkFiltered} also applies here of course, and the filtering condition \eqref{FilteringCondition} then becomes: 
	$$
		\norm{y_h[p_h]}_{W^{2,1}(0,T;L^2_h(0,1))} + \norm{(y^0_h, y^1_h)}_{L^\infty_h(0,1)^2} \leq \frac{\delta}{h},	\qquad 		 \int_{[0,1)} \left|\partial_h^+ (q_h- p_h)\right|^2  \leq \frac{\delta^2}{h^{2}} \int_{(0,1)} |q_h-p_h|^2,
	$$
	for some $\delta>0$ small enough.
	A convenient way to satisfy these two conditions is to impose that both $p_h$ and $q_h$ belong to a filtered space and that the data $(y_{h}^0, y_h^1)$, $g_h$ and $(g_h^0, g_h^1)$ are smooth.
\end{remark}

\section{Convergence issues}\label{Cv}
In this section, we will detail and prove the convergence results that were presented rapidly in the introduction.
\subsection{Statements of the results}\label{SubsecCv}
In order to prove a convergence result, we shall need some assumptions first.

\begin{assumption}[A priori bounds on the potential]
	\label{AssBounds}
	There exists $m>0$ such that $p \in L^\infty_{\leq m} (0,1)$.
\end{assumption}

\begin{assumption}[Regularity assumptions]
	\label{AssRegContinue}	
	The data satisfy
	\begin{align*}
		& (y^0,y^1)  \in   H^2(0,1) \times H^1(0,1),
		\\
		& g \in   W^{1,1}(0,T;L^2(0,1)),
		\quad
		(g^0, g^1) \in (H^2(0,T))^2,
	\end{align*}
	with the compatibility conditions
	$$
		 g^0 (0) = y^0(0), \quad g^1(0) = y^0(1), \quad \partial_t g^0(0) = y^1(0) \hbox{  and  } \partial_t g^1(0) = y^1(1).
	$$
\end{assumption}
One should notice that under these regularity assumptions, according to \cite{LasieckaLionsTriggiani} (see also Remark~\ref{RemReg}), for $p \in L^\infty_{\leq m} (0,1)$ the solution $y[p]$ of \eqref{CWE1} belongs to the space $C^2([0,T]; L^2(0,1)) \cap C^1([0,T]; H^1(0,1)) \cap C^0([0,T]; H^2(0,1))$. In particular, one can check that $ \partial_{tx} y[p](\cdot, 1) \in L^2(0,T)$ (this can be found in \cite{LasieckaLionsTriggiani} but can also be seen as a consequence of the multiplier identity \eqref{multid}) and that $ y[p] \in H^1(0,T; L^\infty (0,1))$.\\

Since we are interested in a convergence result, we shall explain how to compare discrete functions with continuous ones. In order to do so, we introduce two extension operators.

The first one extends discrete functions by continuous piecewise affine functions. To be more precise, if $f_h$ is a discrete function $(f_{j,h})_{j \in \llbracket 0, \cdots, N+1\rrbracket }$, the extension $\eh (f_h)$ defined on $[0,1]$ by 
$$
	\eh(f_h)(x) = f_{j,h} + \left( \frac{f_{j+1,h}- f_{j,h}}{h}\right)( x-jh)	\quad \hbox{ on } [jh, (j+1)h], \ j \in \llbracket 0, \cdots, N\rrbracket.
$$
This extension presents the advantage of being naturally in $H^1(0,1)$.

The second one is the piecewise constant extension $\eh^0(f_h)$, defined for discrete functions $(f_{j,h})_{j \in \llbracket 1, \cdots, N\rrbracket }$ by
$$
	\begin{array}{ll}
	\eh^0 (f_h) = f_{j,h}  \quad &\hbox{ on } [(j-1/2)h, (j+1/2)h[, \, j \in \llbracket 1, \cdots, N \rrbracket,
	\smallskip\\
	\eh^0 (f_h) = 0  \quad &\hbox{ on } [0, h/2[\cup [ (N+1/2)h,1].
	\end{array}
$$
Of course, this one is more natural when dealing with functions lying in $L^2(0,1)$. In particular, we have
$$
	\norm{ \eh^0 (f_h)}_{L^2(0,1)} = \norm{ f_h}_{L^2_h(0,1)}.
$$

Also note that easy (but tedious) computations show that $\eh(f_h)$ converge to $f$ in $L^2(0,1)$ if and only if $\eh^0(f_h)$ converge to $f$ in $L^2(0,1)$.\\

Of course, we shall need some convergence estimates:

\begin{assumption}[Convergence assumptions]
	\label{AssConvergence}
	The sequence of discrete data $(y^0_h, y^1_h)$ satisfies
	\begin{equation}
		\label{ConvInitData}
		(\eh^0 (\Delta_h y^0_h), \eh^0 (\Delta_h y^1_h) )\underset{h \to 0} \longrightarrow (\Delta y^0, \Delta y^1) \quad \hbox{ in } L^2(0,1) \times H^{-1}(0,1).
	\end{equation}
	The sequences of source terms $g_h, (g^0_h, g^1_h)$ satisfy 
	\begin{equation}
		\label{ConvSourceTerms}
		\eh^0 (g_h) \underset{h \to 0} \longrightarrow g \quad \hbox{ in } W^{1,1}(0,T; L^2(0,1)), \quad (g^0_h, g^1_h) \underset{h \to 0} \longrightarrow (g^0, g^1) \quad \hbox{ in } (H^2(0,T))^2.
	\end{equation}
\end{assumption}
Finally, we shall also need a uniform positivity assumption:
\begin{assumption}[Positivity]\label{AssPositivity}
	There exists $r>0$ such that 	
	\begin{equation}
		\label{EqPositivityConv}
		\inf \left\{|y^0(x)|,x\in (0,1)\right\} \geq r> 0, 
			\quad \hbox{and} \quad 
		\forall h >0, \quad \inf_{ j \in \llbracket 1, N\rrbracket} |y^{0}_{j,h}|\geq r.
	\end{equation}
\end{assumption}

Now, we  introduce, for $h>0$, the following observation operator:

\begin{equation}
	\label{DiscreteFh}
	\begin{array}{lcll}
		\Theta_h :  	&L^\infty_{h,\leq m} (0,1) &\to & L^2(0,T)\times L^2((0,T)\times(0,1)) \\
		&p_h &\mapsto &\big( \partial_t (\partial_h^-y_h[p_h])_{N+1} , h\partial_x \eh( \partial_{tt} y_h[p_h]) \big),
	\end{array}
\end{equation}
where $y_h[p_h]$ is the solution of \eqref{SDWE1} with potential $p_h$.
We also introduce its continuous analogous 
\begin{equation}
	\begin{array}{lcll}
		\Theta_0 : 	&L^\infty_{\leq m} (0,1) &\to&  L^2(0,T)\times L^2((0,T)\times(0,1)) \\
		& p &\mapsto&\big( \partial_t \partial_x y[p](\cdot,1), 0 \big), 
	\end{array}
\end{equation}
where $y[p]$ is the solution of \eqref{CWE1} with potential $p$.

Note that, using these notations, Theorem \ref{TCWE2} can then be seen as a uniform stability of the maps $\Theta_h^{-1}$. Indeed, 
\begin{multline*}
	\norm{h \partial_h^+\partial_{tt} y_h[q_h] - h \partial_h^+\partial_{tt} y_h[p_h]}_{L^2(0,T;L^2_h([0,1))} 
	\\
	=
	 \norm{h \partial_x \eh( \partial_{tt} y_h[q_h])-h \partial_x \eh( \partial_{tt} y_h[p_h]) }_{L^2((0,T)\times (0,1))},
\end{multline*}
and then \eqref{UniformStability-3} reads as:
\begin{equation}
	\label{UniformStability-4}
	\norm{\eh^0 (q_h) - \eh^0(p_h)}_{L^2(0,1)} \leq C  \norm{ \Theta_h(p_h)  - \Theta_h(q_h) }_{L^2(0,T) \times L^2((0,T)\times (0,1))}.
\end{equation}

Our main result is then the following convergence theorem:

\begin{theorem}\label{ThmCV}
Under Assumptions \ref{AssBounds}--\ref{AssPositivity}, let $q_h \in L^\infty_{h,\leq m}(0,1)$ be such that 
\begin{equation}
	\label{ConvergenceObs}
	\Theta_h(q_h) \underset{h \to 0}\longrightarrow \Theta_0(p) \quad \textit{strongly in } L^2(0,T)\times L^2((0,T)\times(0,1)).
\end{equation}
Then one has the convergence 
\begin{equation}
	\label{ConvergencePotentials}
	\eh^0 (q_h) \underset{h \to 0}\longrightarrow p \quad \textit{ in } L^2(0,1).
\end{equation}
\end{theorem}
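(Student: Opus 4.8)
The plan is to carry out the Lax-type argument advertised in the introduction: \emph{consistency} plus \emph{uniform stability} yields convergence. The data given are a sequence $q_h \in L^\infty_{h,\leq m}(0,1)$ whose observation $\Theta_h(q_h)$ converges to the continuous observation $\Theta_0(p)$, and the goal is to deduce convergence of the potentials themselves.

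First I would introduce a \emph{reference} sequence of discrete potentials $p_h \in L^\infty_{h,\leq m}(0,1)$ attached to the target $p$, as provided by the consistency result (Theorem~\ref{ThmConsistence}). Under Assumptions~\ref{AssBounds}--\ref{AssConvergence}, a natural choice is a grid restriction or local average of $p$, which stays in $L^\infty_{h,\leq m}(0,1)$ since $\norm{p}_{L^\infty(0,1)} \leq m$, and consistency then guarantees
$$
	\eh^0(p_h) \underset{h\to 0}\longrightarrow p \ \text{ in } L^2(0,1) \quad\text{ and }\quad \Theta_h(p_h) \underset{h\to 0}\longrightarrow \Theta_0(p) \ \text{ in } L^2(0,T)\times L^2((0,T)\times(0,1)),
$$
the latter being exactly the combination of \eqref{Consistency-1b} for the boundary flux and \eqref{Consistency-2} for the Tychonoff component $h\partial_h^+\partial_{tt}y_h[p_h]\to 0$.

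Next, since both $p_h$ and $q_h$ lie in $L^\infty_{h,\leq m}(0,1)$, I would apply the uniform stability estimate of Theorem~\ref{TCWE2}, taking $p_h$ as the reference potential, in the form \eqref{UniformStability-4}: there is a constant $C$ \emph{independent of $h$} such that
$$
	\norm{\eh^0(q_h) - \eh^0(p_h)}_{L^2(0,1)} \leq C\, \norm{\Theta_h(q_h) - \Theta_h(p_h)}_{L^2(0,T)\times L^2((0,T)\times(0,1))}.
$$
A triangle inequality then splits the right-hand side as $\norm{\Theta_h(q_h)-\Theta_0(p)} + \norm{\Theta_0(p)-\Theta_h(p_h)}$, the first term tending to $0$ by the hypothesis \eqref{ConvergenceObs} and the second by the consistency of $p_h$. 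Hence $\norm{\eh^0(q_h)-\eh^0(p_h)}_{L^2(0,1)}\to 0$, and one last triangle inequality with $\eh^0(p_h)\to p$ delivers \eqref{ConvergencePotentials}.

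The only genuine obstacle is to verify that Theorem~\ref{TCWE2} is actually applicable to the reference sequence $p_h$, that is, that the uniform regularity hypothesis \eqref{RegDiscrete}, namely $\norm{y[p_h]}_{H^1(0,T;L^\infty_h(0,1))}\leq K$ with $K$ independent of $h$, holds together with the positivity $\inf_j |y^0_{j,h}|\geq r$. The positivity is exactly Assumption~\ref{AssPositivity}. The uniform regularity is the discrete counterpart of Remark~\ref{RemReg}: it requires propagating through the discrete wave equation the uniform discrete $H^2$-type bounds on the data encoded in Assumption~\ref{AssConvergence} (in particular the convergence \eqref{ConvInitData} of $\eh^0(\Delta_h y^0_h)$ and $\eh^0(\Delta_h y^1_h)$), and then combining the uniform energy estimates of Lemma~\ref{LemEnergy} applied to $\partial_t y[p_h]$ with the one-dimensional discrete Sobolev embedding of $H^1_h$ into $L^\infty_h$ (uniform in $h$) to convert discrete $H^1$ control into the required $L^\infty_h$ control. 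Once this uniform bound \eqref{RegDiscrete} is secured, the three steps above close the argument.
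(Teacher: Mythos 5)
Your proposal is correct and follows essentially the same Lax-type argument as the paper: take the consistent reference sequence $p_h$ from Theorem~\ref{ThmConsistence}, apply the uniform stability estimate \eqref{UniformStability-4} of Theorem~\ref{TCWE2} to $\Theta_h(q_h)-\Theta_h(p_h)\to 0$, and conclude by triangle inequality. The only cosmetic difference is that the paper secures the uniform regularity hypothesis \eqref{RegDiscrete} directly from the bound \eqref{BoundsOnDtYh} already packaged in the statement of Theorem~\ref{ThmConsistence}, rather than re-deriving it from energy estimates as you sketch.
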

Before going into the proof of Theorem \ref{ThmCV}, we shall emphasize that there exist discrete sequences of potentials 
such that \eqref{ConvergenceObs} holds. Actually, this is a consequence of the following consistency result:

\begin{theorem}\label{ThmConsistence}
	Under Assumptions \ref{AssBounds}--\ref{AssPositivity}, for all potential $p \in L^\infty_{\leq m}(0,1)$ there exists discrete potentials $p_h \in L^\infty_{h,\leq m}(0,1)$ such that
	\begin{equation} 
	\label{ConsistanceFh}
		\eh^0 ( p_h )\underset{h \to 0}\longrightarrow p  \quad \hbox{ in } L^2(0,1) \quad
		\hbox{ and }
		\quad \Theta_h(p_h) \underset{h \to 0}\longrightarrow \Theta_0(p) \quad \hbox{ in } L^2(0,T)\times L^2((0,T)\times(0,1)).
	\end{equation}
Moreover
	\begin{equation}
		\label{BoundsOnDtYh}
		\sup_{h \in (0,1)} \|y_h[p_h] \|_{H^1(0,T; L^\infty_h(0,1))} < \infty,
	\end{equation}
	where $y_h[p_h]$ is the solution of \eqref{SDWE1}.
\end{theorem}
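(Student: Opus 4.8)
The plan is to exhibit a single sequence $p_h$ and verify the three convergences \eqref{Consistency-1a}--\eqref{Consistency-1b}--\eqref{Consistency-2} together with the uniform bound \eqref{BoundsOnDtYh} simultaneously. I would define $p_h$ by local averaging, $p_{j,h} = h^{-1}\int_{(j-1/2)h}^{(j+1/2)h} p$, so that $|p_{j,h}| \le \|p\|_{L^\infty(0,1)} \le m$, i.e. $p_h \in L^\infty_{h,\le m}(0,1)$, and $\eh^0(p_h) \to p$ in $L^2(0,1)$ by the Lebesgue differentiation theorem; this is exactly \eqref{Consistency-1a}. The heart of the matter is then a convergence analysis for the forward scheme \eqref{SDWE1}. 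I would introduce a grid projection $Y_h$ of the continuous solution $y[p]$ (nodal values or local averages) and study the error $e_h = y_h[p_h] - Y_h$, which solves a discrete wave equation with vanishing initial data and a source built from the finite-difference truncation error $\Delta_h Y_h - \overline{\partial_{xx} y[p]}$ and the potential mismatch. Under Assumptions \ref{AssRegContinue} and \ref{AssConvergence}, $y[p]$ lies in $C^2([0,T];L^2)\cap C^1([0,T];H^1)\cap C^0([0,T];H^2)$, which is enough to prove that these source terms tend to $0$ in $L^1(0,T;L^2_h(0,1))$; since the regularity is too low to produce a convergence rate, I would obtain this by density of smooth functions combined with the uniform boundedness of $\Delta_h$ on grid projections of $H^2$ functions. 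Feeding this into the energy estimate \eqref{Energy-Uniform} of Lemma \ref{LemEnergy}, applied both to $e_h$ and to $\partial_t e_h$ (whose data are controlled by \eqref{ConvInitData}--\eqref{ConvSourceTerms}), yields $E^{e_h}_h(t) \to 0$ and $E^{\partial_t e_h}_h(t)\to 0$ uniformly in $t$, hence strong convergence of $y_h[p_h]$, $\partial_t y_h[p_h]$ and, through the equation, of $\partial_{tt} y_h[p_h]$ toward the corresponding objects for $y[p]$ in the natural $L^2$-based norms.

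For the flux convergence \eqref{Consistency-1b}, I would split $\partial_t(\partial_h^- y_h[p_h])_{N+1} = \partial_t(\partial_h^- Y_h)_{N+1} + \partial_t(\partial_h^- e_h)_{N+1}$. The first term converges to $\partial_{tx} y[p](\cdot,1)$ in $L^2(0,T)$ by consistency of the one-sided difference at the boundary together with the continuous hidden regularity $\partial_{tx} y[p](\cdot,1)\in L^2(0,T)$. The second term is controlled by applying the discrete hidden-regularity estimate \eqref{HiddenRegularityUniform} to $\partial_t e_h$, which bounds $\|\partial_t(\partial_h^- e_h)_{N+1}\|_{L^2(0,T)}$ by the data and energy of the $\partial_t e_h$-problem, a quantity already shown to vanish; this gives \eqref{Consistency-1b}.

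The convergence \eqref{Consistency-2} is where the semi-discrete nature really enters. Having established that $\eh^0(\partial_{tt} y_h[p_h]) \to \partial_{tt} y[p]$ strongly in $L^2((0,T)\times(0,1))$, I would deduce $h\partial_h^+\partial_{tt} y_h[p_h] \to 0$ from a soft compactness argument rather than from a pointwise estimate: a strongly convergent sequence in $L^2$ is relatively compact, hence equicontinuous for space translations in the sense of Fréchet--Kolmogorov, and since $\|h\partial_h^+ f_h\|_{L^2_h(0,1)}$ is, up to boundary terms, exactly the $L^2$-norm of the translate-by-$h$ difference of $\eh^0(f_h)$, the modulus of continuity evaluated at the shrinking step $h$ forces this norm to $0$. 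This is the rigorous form of the informal statement in the introduction that $h\partial_h^+$ is of norm bounded by $2$ on $L^2_h(0,1)$ yet annihilates strongly convergent sequences in the limit.

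Finally, for \eqref{BoundsOnDtYh} I would use the energy estimate \eqref{Energy-Uniform} applied to $z_h = \partial_t y_h[p_h]$, whose initial data $(y^1_h, \Delta_h y^0_h - p_h y^0_h + g_h(0))$ and source $\partial_t g_h$ are bounded uniformly in $h$ by Assumption \ref{AssConvergence}, to control $\partial_h^+ y_h[p_h]$ and $\partial_h^+\partial_t y_h[p_h]$ in $L^\infty(0,T;L^2_h(0,1))$; combined with the one-dimensional discrete Sobolev embedding $\|f_h\|_{L^\infty_h(0,1)} \le C(\|f_h\|_{L^2_h(0,1)} + \|\partial_h^+ f_h\|_{L^2_h([0,1))})$, proved from $f_{j,h} = h\sum_{k<j}(\partial_h^+ f_h)_k$ and Cauchy--Schwarz after subtracting a boundary lifting, this yields the uniform $H^1(0,T;L^\infty_h(0,1))$ bound. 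I expect the main obstacle to be the forward convergence analysis under the low regularity of Assumptions \ref{AssRegContinue}--\ref{AssConvergence}: with only $y[p]\in C^0([0,T];H^2)$ there is no convergence rate available, so both the vanishing of the truncation error and the strong $L^2$ convergence of $\partial_{tt} y_h[p_h]$ must be extracted by density together with the uniform energy and hidden-regularity estimates of Lemma \ref{LemEnergy}, rather than by a direct Taylor expansion.
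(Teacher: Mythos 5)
Your architecture (local averaging for $p_h$, a truncation-error/energy analysis of the forward scheme, a Fr\'echet--Kolmogorov argument for the Tychonoff term, a discrete Sobolev embedding for \eqref{BoundsOnDtYh}) is genuinely different from the paper's, and several pieces are sound: the construction of $p_h$, the deduction of \eqref{Consistency-2} from strong $L^2((0,T)\times(0,1))$ convergence of $\eh^0(\partial_{tt}y_h[p_h])$ via equicontinuity of translates, and the embedding $\norm{f_h}_{L^\infty_h}\leq C(\norm{f_h}_{L^2_h}+\norm{\partial_h^+f_h}_{L^2_h})$ are all correct. The gap lies exactly where the paper's proof concentrates its effort: the boundary data.

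Concretely, your error $e_h=y_h[p_h]-Y_h$ does \emph{not} satisfy homogeneous Dirichlet conditions: at $j=0$ and $j=N+1$ it equals $g^i_h-g^i$. Hence Lemma \ref{LemEnergy} and the hidden-regularity estimate \eqref{HiddenRegularityUniform}, which you invoke for $e_h$ and $\partial_t e_h$, do not apply as stated, and this is not cosmetic. Everything you need downstream --- the strong $L^2$ convergence of $\partial_{tt}y_h[p_h]$ that feeds your Fr\'echet--Kolmogorov step, the $L^2(0,T)$ control of $\partial_t(\partial_h^-e_h)_{N+1}$, and the uniform bound \eqref{BoundsOnDtYh} --- requires the energy of $\partial_t e_h$ to be controlled; the boundary data of that problem are $\partial_t(g^i_h-g^i)$, which under Assumption \ref{AssRegContinue} converge only in $H^1(0,T)$, and an affine lifting of them injects a source containing $\partial_{ttt}(g^i_h-g^i)$, which is not controlled since $g^i\in H^2(0,T)$ only. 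Keeping the boundary terms in the energy identity instead forces you to prove a uniform admissibility/hidden-regularity estimate for the \emph{non-homogeneous} discrete problem, i.e., essentially the content of Theorem \ref{ThmConvTransposition}, which the paper imports from \cite{ErvZuaBook} and applies directly to $\partial_{tt}z_h$ (whose boundary data are merely $L^2(0,T)$, outside the energy framework altogether). A second, related gap: your claim that $(\partial_h^-\partial_t Y_h)_{N+1}=h^{-1}\int_{1-h}^1\partial_{tx}y\,dx$ converges to $\partial_{tx}y(\cdot,1)$ in $L^2(0,T)$ ``by consistency and hidden regularity'' is unproven; $\partial_t y\in C^0([0,T];H^1(0,1))$ gives no continuity of $x\mapsto\partial_{tx}y(\cdot,x)$ with values in $L^2(0,T)$ without a sidewise energy estimate. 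The paper dispenses with both difficulties at once through the multiplier identities \eqref{multid}--\eqref{MultiplierDiscrete}: weak convergence of the discrete fluxes together with convergence of the combined quantity $\tfrac12\norm{(\partial_h^-\partial_t z_h)_{N+1}}_{L^2}^2+\tfrac{h^2}{4}\norm{\partial_h^+\partial_{tt}z_h}_{L^2}^2$ to $\tfrac12\norm{\partial_{tx}z(\cdot,1)}_{L^2}^2$ forces strong convergence of the flux \emph{and} the vanishing of the Tychonoff term simultaneously. To salvage your route you would have to either strengthen Assumption \ref{AssRegContinue} to $g^i\in H^3(0,T)$ or replace Lemma \ref{LemEnergy} by the non-homogeneous (transposition) convergence theory.
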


In the following section, we give the proofs of these Theorems. Actually, as we will see, Theorem~\ref{ThmConsistence} is the second milestone of the proof of Theorem~\ref{ThmCV}, the first one being Theorem~\ref{TCWE2}.
In other words, the proof of Theorem~\ref{ThmCV}, that will be given at the end of this section, relies on a Lax-type argument for the convergence of the numerical schemes based on the consistency of the method, given by \eqref{ConsistanceFh}, and the uniform stability 
\eqref{UniformStability-4} of the discrete inverse problems.

\subsection{Proofs}
\begin{proof}[Proof of Theorem \ref{ThmConsistence}]

In the proof, we shall distinguish the regularity and convergence issues coming from the boundary source terms and the initial data from the classical ones coming from the potential and distributed source term.\\

\noindent {\bf Step 1: Convergence without potential and source term.}\\
Let $z$ be the solution of 
\begin{equation}
	\label{CWE1z}
		\left\{\begin{array}{ll}
			\partial_{tt}z - \partial_{xx}z=0,  & (t,x) \in  (0,T)\times (0,1),
				\\
			z(t,0)=g^0(t),\quad y(t,1)=g^1(t),&  t\in (0,T),
				\\
			z(0,\cdot)= y^0, \quad \partial_t z(0,\cdot)= y^1. &
		\end{array}
		\right.
\end{equation}
Since $(y^0, y^1)$ and $(g^0, g^1)$ satisfy Assumption \ref{AssRegContinue}, the solution $z$ of \eqref{CWE1z} satisfies (see \cite{LasieckaLionsTriggiani} for details)
$$
	\partial_t z \in C([0,T]; H^1(0,1))\cap C^1([0,T], L^2(0,1)).
$$ 
We are therefore allowed to write the following multiplier identity
\begin{multline}\label{multid}
	\frac{1}{2} \int_0^T  |\partial_{xt} z(t,1)|^2 \, dt
	= \frac 12 \int_0^T\int_0^1 \left( |\partial_{tt} z|^2 +  |\partial_{xt} z|^2  \right)\, dxdt
	- \frac 12 \int_0^T  |\partial_t g^1|^2 \, dt
	\\
	+ \int_0^1 \partial_{tt} z(T,x)x \partial_{xt} z(T,x) \, dx 
	- \int_0^1  \partial_{tt} z(0,x) x \partial_{xt} z (0, x) \, dx,
\end{multline}
which is obtained by differentiating in time equation \eqref{CWE1z}, and then multiplying it by $x \partial_{tx} z$, integrating over $(0,T)\times (0,1)$ and doing integration by parts.

Now, let $z_h$ be the solution of 
\begin{equation}
	\label{SDWE1z}
	\left\{\begin{array}{ll}
		\partial_{tt}z_{j,h} - \left(\Delta_h z_{h}\right)_j=0, \quad & t\in (0,T), \ j\in \llbracket 1,N \rrbracket,\\
		z_{0,h}(t)=g_h^0(t),\quad z_{N+1,h}(t)=g_h^1(t),& t\in [0,T],\\
		z_{j,h}(0)= y_{j,h}^0, \quad\partial_t z_{j,h}(0)= y_{j,h}^1, &  j\in \llbracket 1,N \rrbracket.
	\end{array}\right.
\end{equation}
In this step, we want to prove that $z_h \underset{h \to 0}\longrightarrow z$ in the appropriate functional spaces.
In order to do that, we use the following result:

\begin{theorem}[\cite{ErvZuaBook}]\label{ThmConvTransposition}
Let $(f_h^0, f_h^1)_{h>0}$ be a sequence of boundary data strongly convergent to some functions $(f^0, f^1)$ in $L^2(0,T)^2$. Let $(\varphi_h^0, \varphi_h^1)$ be a sequence of discrete functions such that 
\begin{equation}\label{12ConvInitStrong}
	(\eh^0 (\varphi^0_h), \eh^0 (\varphi_h^1)) \underset{h \to 0}\longrightarrow (\varphi^0, \varphi^1) \quad \hbox{strongly in} \quad    L^2(0,1) \times H^{-1}(0,1).  
\end{equation}

Then the solutions $\varphi_h$ of 
$$
	\left\{\begin{array}{ll}
		\partial_{tt} \varphi_{j,h} - \left(\Delta_h  \varphi_{h}\right)_j=0, \quad & t\in (0,T), \ j\in \llbracket 1,N \rrbracket,\\
		 \varphi_{0,h}(t)=f_h^0(t),\quad  \varphi_{N+1,h}(t)=f_h^1(t),& t\in [0,T],\\
		 \varphi_{j,h}(0)=  \varphi_{j,h}^0, \quad\partial_t  \varphi_{j,h}(0)=  \varphi_{j,h}^1, &  j\in \llbracket 1,N \rrbracket
	\end{array}\right.
$$
converge toward the solution $\varphi$ of
$$
		\left\{\begin{array}{ll}
			\partial_{tt} \varphi - \partial_{xx} \varphi=0, \quad & (t,x) \in  (0,T)\times (0,1),
				\\
			\varphi(t,0)=f^0(t),\quad \varphi(t,1)=f^1(t),&  t\in (0,T),
				\\
			\varphi(0,\cdot)= \varphi^0, \quad \partial_t \varphi(0,\cdot)= \varphi^1, &
		\end{array}
		\right.
$$
in the following sense: for all $p<\infty$,
\begin{equation}\label{ConvergenceY12Strong} 
	\eh^0 (\varphi_h) \underset{h \to 0}{\longrightarrow} \varphi \quad \hbox{strongly in } L^p((0,T); L^2 (0,1))\cap W^{1,p} ((0,T);H^{-1}(0,1)) .
\end{equation}
Besides, for all $t_0\in [0,T]$, 
\begin{equation}\label{ConvergenceEnT=t}
	(\eh^0 (\varphi_h)(t_0), \partial_t \eh^0 (\varphi_h)(t_0)) \underset{h \to 0}{\longrightarrow} (\varphi(t_0), \partial_t \varphi (t_0)) \quad \hbox{strongly in } L^2(0,1) \times H^{-1}(0,1). 
\end{equation}
\end{theorem}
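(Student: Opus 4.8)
The plan is to prove Theorem~\ref{ThmConvTransposition} by a transposition argument, relying on the \emph{uniform admissibility} of the discrete adjoint system rather than on any observability property, the latter being known to fail uniformly in $h$ because of the spurious high-frequency modes. By linearity, I would split $\varphi_h = \varphi_h^{\mathrm{int}} + \varphi_h^{\mathrm{bdy}}$, where $\varphi_h^{\mathrm{int}}$ carries the initial data $(\varphi_h^0, \varphi_h^1)$ with homogeneous Dirichlet conditions and $\varphi_h^{\mathrm{bdy}}$ carries the boundary data $(f_h^0, f_h^1)$ with zero initial data, and treat the two contributions separately.

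For the interior part, the convergence reduces to the classical convergence of the semidiscrete scheme for data in the energy spaces $L^2(0,1) \times H^{-1}(0,1)$. Uniform stability follows from energy estimates of the type of Lemma~\ref{LemEnergy}, and, the data converging strongly by \eqref{12ConvInitStrong}, a weak-$*$ compactness argument together with identification of the limit by testing the discrete equation against samplings of smooth functions (using the consistency of $\Delta_h$) yields convergence to $\varphi^{\mathrm{int}}$. Strong convergence in the sense of \eqref{ConvergenceY12Strong} then follows by upgrading the weak-$*$ convergence with the convergence of the associated discrete energies.

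The core of the argument is the boundary part $\varphi_h^{\mathrm{bdy}}$, which I would define and estimate by duality. Given a discrete source $g_h$, let $\zeta_h$ solve the backward homogeneous-Dirichlet problem with right-hand side $g_h$ and zero data at $t = T$; a double discrete integration by parts based on \eqref{IPP1} of Lemma~\ref{LemIPP2} yields the identity
\[
	\int_0^T \int_{(0,1)} \varphi_h^{\mathrm{bdy}}\, g_h \, dt
	= \int_0^T \Big( f_h^1\, (\partial_h^- \zeta_h)_{N+1} - f_h^0\, (\partial_h^+ \zeta_h)_0 \Big) \, dt,
\]
the boundary contributions being exactly the discrete fluxes of $\zeta_h$. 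The uniform hidden-regularity estimate \eqref{HiddenRegularityUniform} bounds these fluxes in $L^2(0,T)$ by $\norm{g_h}$ uniformly in $h$, so the right-hand side is controlled by $\norm{(f_h^0, f_h^1)}_{L^2(0,T)^2}\, \norm{g_h}$; this provides uniform bounds on $\varphi_h^{\mathrm{bdy}}$ in the transposition space $L^p(0,T; L^2) \cap W^{1,p}(0,T; H^{-1})$. To pass to the limit I would fix a smooth $g$ with samplings $g_h$, show that $\zeta_h$ converges to the continuous adjoint solution $\zeta$ together with the convergence of the fluxes $(\partial_h^- \zeta_h)_{N+1} \to \partial_x \zeta(\cdot,1)$ and $(\partial_h^+ \zeta_h)_0 \to \partial_x \zeta(\cdot,0)$ in $L^2(0,T)$, and then use the strong convergence of $(f_h^0, f_h^1)$ to pass to the limit in the duality identity. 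This identifies every weak-$*$ limit with the continuous transposition solution $\varphi^{\mathrm{bdy}}$, and uniqueness of the limit forces convergence of the whole sequence.

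The main obstacle is precisely the convergence of the boundary fluxes of the adjoint solutions, which is delicate because the spurious high-frequency components of $\zeta_h$ travel at velocity of order $h$, so that a naive energy estimate is insufficient. I would control them through the discrete multiplier identity \eqref{MultiplierIdentity} (the discrete analogue of the multiplier $x \partial_x$), which expresses $\int_0^T |(\partial_h^- \zeta_h)_{N+1}|^2$ in terms of interior energy quantities; since for well-prepared smooth data the discrete energies converge to their continuous counterparts, this transfers interior convergence to flux convergence. Finally, the pointwise-in-time statement \eqref{ConvergenceEnT=t} would follow from the $W^{1,p}(0,T; H^{-1})$ bound via an Arzel\`a--Ascoli / Aubin--Lions compactness argument in $C([0,T]; H^{-1})$, together with the identification of the limit already obtained.
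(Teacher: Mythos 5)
You should first be aware that the paper does not actually prove Theorem~\ref{ThmConvTransposition}: it is quoted from \cite{ErvZuaBook} (listed as ``in preparation''), and the authors only describe its proof in one sentence as ``a duality argument and convergence results for the adjoint equation\dots in particular on their normal derivatives on the boundary''. Your proposal is exactly that strategy, and the key technical device you invoke to get strong $L^2(0,T)$ convergence of the adjoint fluxes --- the discrete multiplier identity \eqref{MultiplierIdentity} combined with weak convergence of the flux, so that convergence of the interior energies upgrades weak to strong convergence of the normal derivative while the extra $\frac{h^2}{4}\int\int|\partial_h^+\partial_{tt}\cdot|^2$ term vanishes --- is precisely the technique the paper itself deploys in Step~1 of the proof of Theorem~\ref{ThmConsistence} to obtain \eqref{FhConvZh}. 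The splitting into an interior part and a boundary (transposition) part, the use of the uniform hidden regularity \eqref{HiddenRegularityUniform} to get $h$-uniform bounds in the transposition space, and the identification of the limit by duality against smooth sources are all consistent with the announced proof. Two minor imprecisions: the duality identity you write has a sign issue depending on the orientation of the adjoint problem (with $\zeta_{N+1}=0$ one gets $\zeta_N/h=-(\partial_h^-\zeta_h)_{N+1}$), and Lemma~\ref{LemEnergy} is an $H^1_h\times L^2_h$-level estimate, so for initial data only in $L^2\times H^{-1}$ the ``interior'' part also has to be treated by transposition (or by a shifted energy), not by a direct application of that lemma.

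The one genuine gap is your treatment of \eqref{ConvergenceEnT=t}. A uniform bound in $W^{1,p}(0,T;H^{-1})$ plus Arzel\`a--Ascoli/Aubin--Lions gives relative compactness of $\eh^0(\varphi_h)$ in $C([0,T];H^{-1}(0,1))$, hence convergence of $\eh^0(\varphi_h)(t_0)$ \emph{in $H^{-1}(0,1)$} for each $t_0$ --- but the statement claims strong convergence in $L^2(0,1)$ at every fixed time $t_0$ (and of $\partial_t\eh^0(\varphi_h)(t_0)$ in $H^{-1}$), which is strictly stronger and cannot be extracted from equicontinuity in a weaker space. To get it you must rerun the transposition argument with the duality pairing localized at $t=t_0$: pair $\varphi_h(t_0)$ against data $(\zeta^0_h,\zeta^1_h)$ for the adjoint problem posed backwards from $t_0$, use \eqref{HiddenRegularityUniform} for the uniform bound, and use the strong flux convergence of the adjoint solutions (again via the multiplier identity) to identify the limit; weak convergence in $L^2$ together with convergence of the norms (obtained from the same energy/multiplier identities) then yields the strong convergence. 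This matters for the paper, because \eqref{ConvergenceEnT=t} is used in the form \eqref{ConvergenceZ_hToZ-t0} to pass to the limit in the time-boundary terms $\int_{(0,1)}\partial_{tt}z_h\,x\,\partial_h\partial_t z_h\big|_0^T$ of \eqref{MultiplierDiscrete}, where strong $L^2(0,1)$ convergence at $t=0$ and $t=T$ is genuinely needed.
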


For the proof of Theorem \ref{ThmConvTransposition}, we refer to \cite{ErvZuaBook}. Note that Theorem \ref{ThmConvTransposition} is not standard, since it deals with solutions of the continuous wave equation defined in the transposition sense. Therefore, the proof of Theorem \ref{ThmConvTransposition} is based on a duality argument and convergence results for the adjoint equation, namely the waves, and in particular on their normal derivatives on the boundary (which corresponds to the adjoint operator of the Dirichlet boundary conditions).\\

Of course, regarding the regularity hypothesis in Assumption \ref{AssRegContinue} and the convergence one in Assumption \ref{AssConvergence}, we can apply this result to $z_h$, $\partial_t z_h$ and $\partial_{tt} z_h$. Of course, the latter yields the strongest result, thus improving the ones on $\partial_t z_h$ and $z_h$: for all $p<\infty$,
\begin{equation}
	\label{ConvergenceZ_hToZ}
	\left\{
	\begin{array}{cccl}
	\partial_{tt} \eh^0 (z_h) &\underset{h \to 0}{\longrightarrow}& \partial_{tt} z& \ \hbox{strongly in } L^p((0,T); L^2 (0,1))\cap W^{1,p} ((0,T);H^{-1}(0,1)), 
	\\
	\partial_{t} \eh (z_h) &\underset{h \to 0}{\longrightarrow}& \partial_{t} z& \ \hbox{strongly in } L^p((0,T); H^1 (0,1))\cap W^{1,p} ((0,T);L^2(0,1)),
	\\
	 \eh( z_h) &\underset{h \to 0}{\longrightarrow}&  z &\ \hbox{strongly in } W^{1,p}((0,T); H^1 (0,1))\cap W^{2,p} ((0,T);L^2(0,1)),
	\end{array}
	\right.
\end{equation}
and, for all $t_0 \in [0,T]$, 
\begin{equation}
	\label{ConvergenceZ_hToZ-t0}
	(\eh^0 (\partial_h \partial_t z_h)(t_0), \eh^0 (\partial_{tt} z_h)(t_0))  \underset{h \to 0}{\longrightarrow} (\partial_{xt} z(t_0), \partial_{tt} z(t_0))  \quad \hbox{strongly in } (L^2(0,1))^2.
\end{equation}

Now, we focus on the convergence of the normal derivatives. This is slightly more subtle. First, arguing as in \cite{ErvZuaBook} by duality against smooth functions, one easily checks that 
\begin{equation}
	\label{WeakConvergence}
	(\partial_h^- z_h)_{N+1}  \underset{h \to 0}{\rightharpoonup}  \partial_x z(\cdot, 1) \quad \hbox{weakly in } L^2(0,T).
\end{equation}
Then, we derive a multiplier identity similar to \eqref{multid} for the discrete equation \eqref{SDWE1z}. In order to do that, we multiply  equation \eqref{SDWE1z} differentiated once in time by $x\partial_h \partial_t z_h$:
$$
	 \int_0^T \int_{(0,1)} \partial_{ttt} z_h \ x\  \partial_h \partial_t z_h\, dt - \int_0^T \int_{(0,1)}\Delta_h \partial_t z_h\,  x \ \partial_h \partial_t z_h \, dt = 0.
$$
But, on the one hand, using \eqref{IPP0}, we have
\begin{align*}
	\lefteqn{
	 \int_0^T \int_{(0,1)} \partial_{ttt} z_h\  x \ \partial_h \partial_t z_h \, dt 
		=
	\left.\int_{(0,1)} \partial_{tt} z_h\  x\  \partial_h \partial_t z_h\right|_0^T
	- \int_0^T \int_{(0,1)} \partial_{tt} z_h \ x \ \partial_h \partial_{tt} z_h  \, dt
	}
	 \\
	 &	=
	 \left.\int_{(0,1)} \partial_{tt} z_h\  x\  \partial_h \partial_t z_h\right|_0^T
	+\frac{1}{2} \int_0^T \int_{(0,1)} |\partial_{tt} z_h|^2 \, dt
	 - \frac{h^2}{4} \int_0^T \int_{[0,1)} |\partial_h ^+ \partial_{tt} z_h|^2 \, dt
\end{align*}
and on the other hand, using now \eqref{IPP2}, we get
$$
	 \int_{(0,1)}\Delta_h \partial_t z_h\,  x \ \partial_h \partial_t z_h
	 	=
	-\frac{1}{2} \int_{[0,1)} |\partial_h^+ \partial_t z_h|^2 + \frac{1}{2} |(\partial_h^- \partial_t z_h)_{N+1}|^2.
$$
Combining these last three identities, we obtain
\begin{multline}
	\label{MultiplierDiscrete}
	\frac{1}{2} \int_0^T |(\partial_h^- \partial_t z_h)_{N+1}|^2 \, dt
	+ 
	\frac{h^2}{4} \int_0^T \int_{[0,1)} |\partial_h ^+ \partial_{tt} z_h|^2\, dt
	\\
	= 
	\left.\int_{(0,1)} \partial_{tt} z_h\  x\  \partial_h \partial_t z_h\right|_0^T
	+\frac{1}{2} \int_0^T \int_{(0,1)} |\partial_{tt} z_h| ^2\, dt + \frac{1}{2} \int_0^T \int_{[0,1)} |\partial_h^+ \partial_t z_h|^2 \, dt.
\end{multline}

According to the strong convergences in \eqref{ConvergenceZ_hToZ} and \eqref{ConvergenceZ_hToZ-t0}, we can pass to the limit in the right hand side of \eqref{MultiplierDiscrete}, which converges to the right hand side of \eqref{multid}, leading to: 
$$
	\lim_{ h \to 0} \left( \frac{1}{2} \int_0^T |(\partial_h^- \partial_t z_h)_{N+1}|^2\, dt
	+ 
	\frac{h^2}{4} \int_0^T \int_{[0,1)} |\partial_h ^+ \partial_{tt} z_h|^2 \, dt
	 \right) 
	 =
	 \frac{1}{2} \int_0^T  |\partial_{xt} z(\cdot,1)|^2 \, dt.
$$
This last fact, together with the weak convergence \eqref{WeakConvergence}, implies that
\begin{equation}
	\label{FhConvZh}
	((\partial_h^- \partial_t z_h)_{N+1}, h \partial_x \eh( \partial_{tt} z_h)) \underset{h \to 0} \longrightarrow (\partial_{xt} z(\cdot, 1), 0) \ \hbox{ strongly in } L^2(0,T) \times L^2((0,T)\times(0,1)).
\end{equation}

\noindent {\bf Step 2 Convergence with source term and potential}\\
So far, we did not assume anything on the potential $p$ and on the convergence of the discrete potentials $p_h$ to $p$, since they did not appear in the study of $z$ and $z_h$ solutions of \eqref{CWE1z} and~\eqref{SDWE1z}.

Since $p \in L^\infty_{\leq m}(0,1)$, it is very easy to construct a sequence $p_h \in L^\infty_{h, \leq m} (0,1)$ such that $\eh^0 (p_h)$ strongly converge to $p$ in $L^2(0,1)$. Taking such sequence $p_h$,
we set $y_h[p_h] = z_h + v_h[p_h]$ where $v_h[p_h]$ is the solution of
\begin{equation*}
	\left\{\begin{array}{ll}
		\partial_{tt}v_{j,h} - \left(\Delta_h v_{h}\right)_j + p_{j,h} v_{j,h}= g_{j,h} - p_{j,h} z_{j,h}, \quad & t\in (0,T), \ j\in \llbracket 1,N \rrbracket,\\
		v_{0,h}(t)= v_{N+1,h}(t)=0,& t\in [0,T],\\
		v_{j,h}(0)= 0, \quad\partial_t v_{j,h}(0)= 0, &  j\in \llbracket 1,N \rrbracket.
	\end{array}\right.
\end{equation*}
Due to the convergence hypothesis in Assumption \ref{AssConvergence}, we have the convergence of $\eh^0(g_h) $ towards $g $ in $W^{1,1}(0,T; L^2(0,1))$ as $h \to 0$.

From \eqref{ConvergenceZ_hToZ}, $\eh^0 (z_h)$ and $\eh^0(\partial_t z_h)$ strongly converge in $L^2(0,T; L^2(0,1))$ towards $z$ and $\partial_t z$, respectively. Besides, $\eh(z_h)$ and $\eh(\partial_t z_h)$ respectively converge to $z$ and $\partial_t z$ in $L^2(0,T; H^1(0,1))$. Since $z_{0,h}(t) = g_h^0(t)$ and $\partial_t z_{0,h}(t) = \partial_t g^0_h(t)$ are bounded ($ H^2 (0,T) \subset C^1([0,T])$) and strongly converge in $C^0([0,T])$, respectively, toward $z(t,0) = g^0(t)$ and $\partial_t z(t,0) = \partial_t g^0(t)$,  $\eh(z_h)$ and $\eh(\partial_t z_h)$ respectively converge to $z$ and $\partial_t z$ in $L^2(0,T; L^\infty(0,1))$. Of course, this implies the $L^2(0,T; L^\infty(0,1))$ boundedness of the sequences $\eh^0 (z_h)$ and $\eh^0(\partial_t z_h)$. Since they converge strongly in $L^2(0,T; L^2(0,1))$, we also have the strong $L^2(0,T;L^4(0,1))$-convergences of $(\eh^0(z_h), \eh^0(\partial_t z_h))$ toward $(z, \partial_t z)$.

Similarly, $\eh^0(p_h)$ strongly converges to $p$ in $L^2(0,1)$ and is bounded in $L^\infty(0,1)$. Therefore, $\eh^0(p_h)$ strongly converges to $p$ in $L^4(0,1)$.

Since $\eh^0 (a_h b_h) = \eh^0(a_h) \eh^0 (b_h)$, we thus obtain that $\eh^0 (p_h z_h)$ and $\eh^0(p_h \partial_t z_h)$, respectively, strongly converge to $pz$ and $p \partial_t z$ in $L^2(0,T; L^2(0,1))$.

Therefore, 
$$
	\eh^0(g_h - p_h z_h) \underset{h\to 0}{\longrightarrow} g - pz \quad \hbox{in }W^{1,1}(0,T; L^2(0,1)).
$$

Thus, classical results yield the convergence of $v_h[p_h]$ toward $v[p]$, solution of 
\begin{equation*}
	\left\{\begin{array}{ll}
		\partial_{tt}v -\Delta v + p v = g - p z, \quad & (t,x)\in (0,T)\times (0,1),\\
		v(t,0)=v(t,1)=0,& t\in (0,T),\\
		v(0,\cdot)= 0, \quad\partial_t v(0,\cdot)= 0. &
	\end{array}\right.
\end{equation*}
Therefore, we obtain
\begin{equation}
	\label{ConvergenceV_htoV}
	\left\{
		\begin{array}{l}
	\partial_{t} \eh( v_h[p_h] ) \underset{h \to 0}{\longrightarrow} \partial_{t} v[p] \quad \hbox{strongly in } L^2((0,T); H^1_0 (0,1))\cap H^1 ((0,T);L^2(0,1)),
	\\
	\eh ( v_h[p_h] ) \underset{h \to 0}{\longrightarrow} v[p]  \quad \hbox{strongly in } L^2((0,T); H^1_0(0,1))\cap H^2 ((0,T);L^2(0,1))
		\end{array}
	\right.
\end{equation}
and, for all $t_0 \in [0,T]$,
$$
	(\eh^0 (\partial_h \partial_t v_h[p_h] )(t_0), \eh^0 (\partial_{tt} v_h[p_h] )(t_0))  \underset{h \to 0}{\longrightarrow} (\partial_{xt} v[p] (t_0), \partial_{tt} v[p](t_0))  \quad \hbox{strongly in } (L^2(0,1))^2.
$$
Of course, as for $z_h$, using the discrete multiplier identity satisfied by $\partial_t v$ (see \eqref{MultiplierDiscrete}) and the above convergences, we easily get
\begin{multline}
	\label{FhConvVh}
	((\partial_h^- \partial_t v_h[p_h])_{N+1},  h \partial_x \eh( \partial_{tt} v_h[p_h])) \underset{h \to 0} \longrightarrow (\partial_{xt} v[p](\cdot, 1), 0) 
	\\ \hbox{ strongly in } L^2(0,T) \times L^2((0,T)\times(0,1)).
\end{multline}

Now, using \eqref{FhConvZh} and \eqref{FhConvVh}, the solution $y_h[p_h]$ of \eqref{SDWE1} converges to the solution $y[p]$ of \eqref{CWE1} in the following sense:
$$
	((\partial_h^- \partial_t y_h[p_h])_{N+1}, h \partial_x \eh( \partial_{tt} y_h[p_h])) \underset{h \to 0} \longrightarrow (\partial_{xt} y[p](\cdot, 1), 0) \ \hbox{ strongly in } L^2(0,T) \times L^2((0,T)\times (0,1)),
$$
which is precisely \eqref{ConsistanceFh}.

Besides, using \eqref{ConvergenceZ_hToZ} and \eqref{ConvergenceV_htoV}, we have
$$
	\eh ( y_h[p_h] ) \underset{h \to 0}\longrightarrow  y[p] \hbox{ in } H^1(0,T; H^1(0,1)), 
$$
which of course implies the bound \eqref{BoundsOnDtYh}.
This concludes the proof of Theorem \ref{ThmConsistence}.
\end{proof}

We are now in position to prove Theorem \ref{ThmCV}.

\begin{proof}[Proof of Theorem \ref{ThmCV}]
%
Let $p \in L^\infty_{\leq m}(0,1)$ and let $q_h \in L^\infty_{h,\leq m}(0,1)$ be such that \eqref{ConvergenceObs} holds. Denote by $p_h$ the potentials given by Theorem \ref{ThmConsistence}.
Then we have 
$$
	\Theta_h(p_h) - \Theta_h(q_h) \underset{h \to 0}\longrightarrow (0,0) \quad \hbox{ strongly in } L^2(0,T) \times L^2((0,T)\times (0,1)).
$$
But according to \eqref{BoundsOnDtYh} and the positivity Assumption \ref{AssPositivity}, 
we can apply Theorem~\ref{TCWE2}: for some $C>0$ independent of $h>0$, estimate \eqref{UniformStability-4} holds. Therefore, $\eh^0 (p_h) -\eh^0(q_h)$ strongly converges to zero in $L^2(0,1)$. Using \eqref{ConsistanceFh}, we deduce that $\eh^0( q_h ) $ strongly converges to $p$ in $L^2(0,1)$.
\end{proof}

\section{Further comments}\label{SecConclusion}

\hspace{4ex}$\bullet$ {\bf Other convergence results.} Note that our convergence results require the convergence of $h \partial_h^+ \partial_{tt} y_h[p_h]$ to zero in $L^2(0,T;L^2(0,1))$. This term is here to handle spurious high-frequency waves generated by the space semi-discretization - see e.g. \cite{Tref} - which are by now well-known to be responsible for the lack of uniform observability of waves \cite{Zua05Survey}. Of course, other ways of removing these high-frequency waves can be implemented, an easy one being to impose some smoothness and filtering conditions on the data - see Remark \ref{RemFiltering}. Note however that these conditions seem to be more difficult to implement in practice.

$\bullet$ {\bf Time discretization.} Here we focused on the space semi-discretization of the wave equation for simplicity. Indeed, the fully discrete wave equation in which the time-derivative has been approximated by the centered difference approximation could be handled the same way, since time and space are completely decoupled then. This will of course introduce a Tychonoff regularization term within the Carleman estimates of the same order but depending not only on the space discretization parameter, but also on the time semi-discretization parameter. This again is completely compatible with the known results on the observability of discrete waves - see \cite{je3,ErvZuaBook}. 

$\bullet$ {\bf Other space discretizations.} Here, we have chosen a very simple space discretization process corresponding to the finite-differences approach. Other space discretizations should be studied, but regarding the literature in what concerns discrete observability estimates for the waves (see e.g. \cite{Zua05Survey,ErvZuaCime}), we expect the Tychonoff regularization term to be needed within the discrete Carleman estimates in the case of finite-elements methods. However, for mixed finite elements methods (see \cite{CasMic,CasMicMunch,je2}), we expect better behavior than here and this Tychonoff regularization term may be not needed anymore. This should be studied carefully. 

$\bullet$ {\bf Higher dimensions and more sophisticated wave models.} Of course, an interesting question would be to develop these discrete Carleman estimates in higher dimensions (as it was done in \cite{BoyerHubertLeRousseau2}). It is usually admitted that Carleman estimates  ``do not see" the dimension of the space. This is indeed true in the continuous case, but in the discrete case,  the integrations by parts are much more intricate. This is currently under investigation. Regarding more generic hyperbolic models, one could also mention \cite{ImYamCOCV05}, \cite{BMO07} or \cite{BaudouinCrepeauValein11} giving stability of inverse problem from global Carleman estimates respectively for the Lam\'e system, a discontinuous wave equation or in a network of 1-d strings.

$\bullet$ {\bf Semilinear wave equations.} One of the standard applications of Carleman estimates is to prove controllability of semilinear wave equations - see \cite{DuyckaertsZhangZuazua,FuYongZhang}. We expect that these discrete Carleman estimates could be of some use to prove the convergence of discrete controls for semilinear wave equations and to improve the results already obtained for globally Lipschitz nonlinearities using bi-grids methods  in \cite{Zuazua06ICM}.  

$\bullet$ {\bf How to compute a discrete sequence $p_h$ such that $\Theta_h(p_h)$ converges to $\Theta_0(p)$ ?} This is certainly one of the most challenging issues concerning this kind of inverse problems, since the map $\Theta_h$ is highly nonlinear. Of course, a natural idea is to introduce
$$
	J_h(p_h) = \norm{\Theta_h(p_h) -\Theta_0(p)}_{L^2(0,T) \times L^2(0,T;L^2(0,1))}^2
$$
and to minimize it on the set $L^\infty_{h, \leq m} (0,1)$. But this can be very hard since $J_h$ may have several local minima. 
Another approach will be presented in the work \cite{BaudouinDeBuhanErvedoza} based on Carleman estimates and stability results inspired from \cite{ImYamCom01,Baudouin01}.
\\

\noindent{\bf Acknowledgements.} The authors acknowledge J\'er\^ome Le Rousseau, Franck Boyer, Jean-Pierre Puel, Masahiro Yamamoto and Fr\'ed\'eric De Gournay for interesting discussions related to that work.\\

\bibliographystyle{plain}

\end{document}